\numberwithin{equation}{section}
\def\eps{{\epsilon}}
\def\C{{\mathbb C}}
\def\A{{\mathbb A}}
\def\D{{\mathbb D}}
\def\R{{\mathbb R}}
\def\H{{\mathbb H}}
\def\CP{{\mathbb C\mathbb P}}
\def\ov{\overline}
\def\pl{\parallel}
\theoremstyle{plain}
\newtheorem{lemma}{Lemma}[section]
\newtheorem{proposition}[lemma]{Proposition}
\newtheorem{example}[lemma]{Example}
\newtheorem{remark}[lemma]{Remark}
\newtheorem{theorem}[lemma]{Theorem}
\newtheorem{corollary}[lemma]{Corollary}
\newtheorem{compatibility}[lemma]{Compatibility Condition}
\theoremstyle{definition}
\newtheorem{definition}[lemma]{Definition}
\theoremstyle{remark}
\begin{document}
\title{Moduli space for generic unfolded differential linear systems }
\author[J. Hurtubise]{Jacques Hurtubise} 
\address{Department of Mathematics, McGill
University, Burnside Hall, 805 Sherbrooke Street West, Montreal
(Qc), H3A 0B9, Canada} 
\email{jacques.hurtubise@mcgill.ca}

\author[C. Rousseau]{Christiane Rousseau }
\address{D\'epartement de
math\'ematiques et de statistique\\Universit\'e de Montr\'eal\\C.P. 6128,
Succursale Centre-ville, Montr\'eal (Qc), H3C 3J7, Canada.}
\email{rousseac@dms.umontreal.ca}

\thanks{Research supported by NSERC and partially by FRQNT in Canada}

\keywords{Stokes phenomenon, irregular singularity, unfolding, confluence, divergent series, monodromy, analytic classification, summability, flags, moduli space}
\date{\today}

\begin{abstract}
In this paper, we identify the moduli space for germs of generic unfoldings of nonresonant linear differential systems with an irregular singularity of Poincar\'e rank $k$ at the origin, under analytic equivalence. The modulus of a given family was determined in \cite{HLR}: it comprises a formal part depending analytically on the parameters, and an analytic part given by unfoldings of the Stokes matrices. These unfoldings are given on  ``Douady-Sentenac'' (DS) domains  in the parameter space covering the generic values of the parameters corresponding to Fuchsian singular points. 
Here we identify exactly which moduli can be realized. A necessary condition on the analytic part, called compatibility condition, is saying that the unfoldings define the same monodromy group (up to conjugacy) for the different presentations of the modulus on the intersections of DS domains. With the additional requirement that the corresponding cocycle is trivial and good limit behavior at some boundary points of the DS domains, this condition becomes sufficient. 
In particular we show that any modulus can be realized by a $k$-parameter family of systems of rational linear differential equations over $\CP^1$ with $k+1$, $k+2$ or $k+3$ singular points (with multiplicities). Under the generic condition of irreducibility, there are precisely $k+2$ singular points which are Fuchsian as soon as simple. This in turn implies that any unfolding of an irregular singularity of Poincar\'e rank $k$ is analytically equivalent to a rational system of the form $y'=\frac{A(x)}{p_\eps(x)}\cdot y$, with $A(x)$ polynomial of degree at most $k$ and $p_\eps(x)$ is the generic unfolding of the polynomial $x^{k+1}$.  \end{abstract}

\maketitle

\pagestyle{myheadings}\markboth{J. Hurtubise and C. Rousseau}{Moduli space for generic unfolded  linear differential systems}

\section{Introduction}

The local classification in the complex domain of  germs of systems of linear differential equations, with a pole at the origin, 
$$y' = \frac {A(x)}{x^{k+1}} \cdot y,$$
exhibits  a qualitative shift as one goes from  $k=0$ to $k>0$. In both cases, one can first go to a normal form by a formal gauge transformation $g(x)$, that is a power series in $x$. Let us suppose for simplicity that the system is nonresonant i.e. the leading term $A(0)$ is diagonal, with eigenvalues which are distinct (for $k=0$, one would ask that they be distinct modulo the integers).  One can perform a formal normalization to have $A(x)$ diagonal, and a polynomial of order $k$. 
If we then proceed to the analytic classification, one finds that for $k=0$, the formal classification is the same as the analytic classification, in the absence of resonance. For $k>0$, the situation is very different. The formal gauge transformation does not in general converge, and one only has analytic solutions on $2k$ sectors around the origin, with constant matrices (Stokes matrices) relating the solutions as one goes from sector to sector. If we further assume that $A(0)= \mathrm{diag} (\lambda_1,\dots, \lambda_n)$, and that we have permuted the coordinates of $y$ and rotated $x$ to $e^{i\theta}x$ so that 
\begin{equation}
\mathrm{Re}(\lambda_1)>\dots > \mathrm{Re}(\lambda_n),\label{order_eigenvalues}\end{equation} 
then the Stokes matrices alternate between upper triangular and lower triangular as one goes from sector to sector. Once one has fixed the formal normal form, the Stokes matrices provide complete invariants. While these can be thought of as generalised monodromies (e.g.,\cite{MR}), the passage from the irregular case ($k>0$) to the regular case $k=0$ is not immediate, since the monodromies for $k=0$ have no limit at the confluence.
This passage however is a natural one to consider, in particular when unfolding a system with an irregular singularity. Doing so sheds new light on the meaning of the Stokes matrices, and this has been studied in particular in \cite{Ra}, \cite{G}, \cite{cLR2}, \cite{HLR}.

Indeed, one has a deformation from one to the other. Let $p_\eps(x), \eps\in \C^k$ be the generic deformation of $x^{k+1}$ as a polynomial of degree $k+1$:
\begin{equation} p_\eps(x)=x^{k+1}+ \eps_{k-1}x^{k-1} +\dots+\eps_1x+\eps_0,\label{def:p}\end{equation} 
and then consider a deformed system 
\begin{equation}y' = \frac {A(\eps, x)}{p_\eps(x)} \cdot y. \label{eq_deployee}\end{equation} 
 For a generic value of $\eps$, the singularities are simple poles, and the classification, for a fixed formal form, is essentially the monodromy representation; at $\eps=0$, and more generally, along the discriminant divisor $\Delta(\eps)=0$, one has higher order singularities and so the Stokes factors.
 
 In  \cite{cLR2}, \cite{HLR}, the problem of studying the family, i.e., the {\it unfolding} of the original system was addressed. It involved the seemingly simple step of rewriting the equation as a coupled system
 \begin{align} \dot y &= A(\eps, x)\cdot y\label{vector}\\ \dot x &= p_\eps(x)\label{scalar}\end{align}
 of a vector equation (\ref{vector}) and a scalar equation (\ref{scalar}) in an extra variable $t$.   
 
 One begins by analyzing the scalar equation, appealing to some quite elegant work of Douady and Sentenac \cite {DS05}. One considers real trajectories (${\rm Im}(t)$ = constant) of the scalar equation.  Away from a real codimension one  bifurcation locus, the results of  \cite {DS05} partition the $\eps$-space into $C_k$ \emph{Douady-Sentenac domains}, or \emph{DS domains} $\widetilde{S_s}$, all adherent to $0$, where 
 \begin{equation}C_k= \frac{\binom{2k}{k}}{k+1}\label{C_k}\end{equation} is the $k$-th Catalan number. The $\widetilde{S_s}$ can be extended to wider DS   domains $S_s$, which retract to $\widetilde{S_s}$, the union of which covers all values of $\eps$ for which the singular points are all of multiplicity one, that is, the complement of the discriminantal locus. Each of these domains is contractible. In \cite{HLR}, these domains are referred to as sectoral domains.

On each $S_s$, and for each $\eps\in S_s$, the $x$-plane is decomposed into $2k$ generalized sectors $\Omega_{j,\eps}^\pm$ (see Figure~\ref{Omega_Stokes}(b)), each adherent to two singular points of the scalar equation as in Figure~\ref{Omega_Stokes}(b). This generalizes the natural sectors of normalization for $\eps=0$;  indeed, the boundaries of the sectors, instead of all terminating at a point, terminate at different singular points, which are the various zeroes of $p_\eps(x)$. These zeroes are vertices of a natural tree, so that the singular point at $\eps=0$ has in some sense expanded into a tree. 

\begin{figure}\begin{center}
\subfigure[$\eps=0$]{\includegraphics[width=5cm]{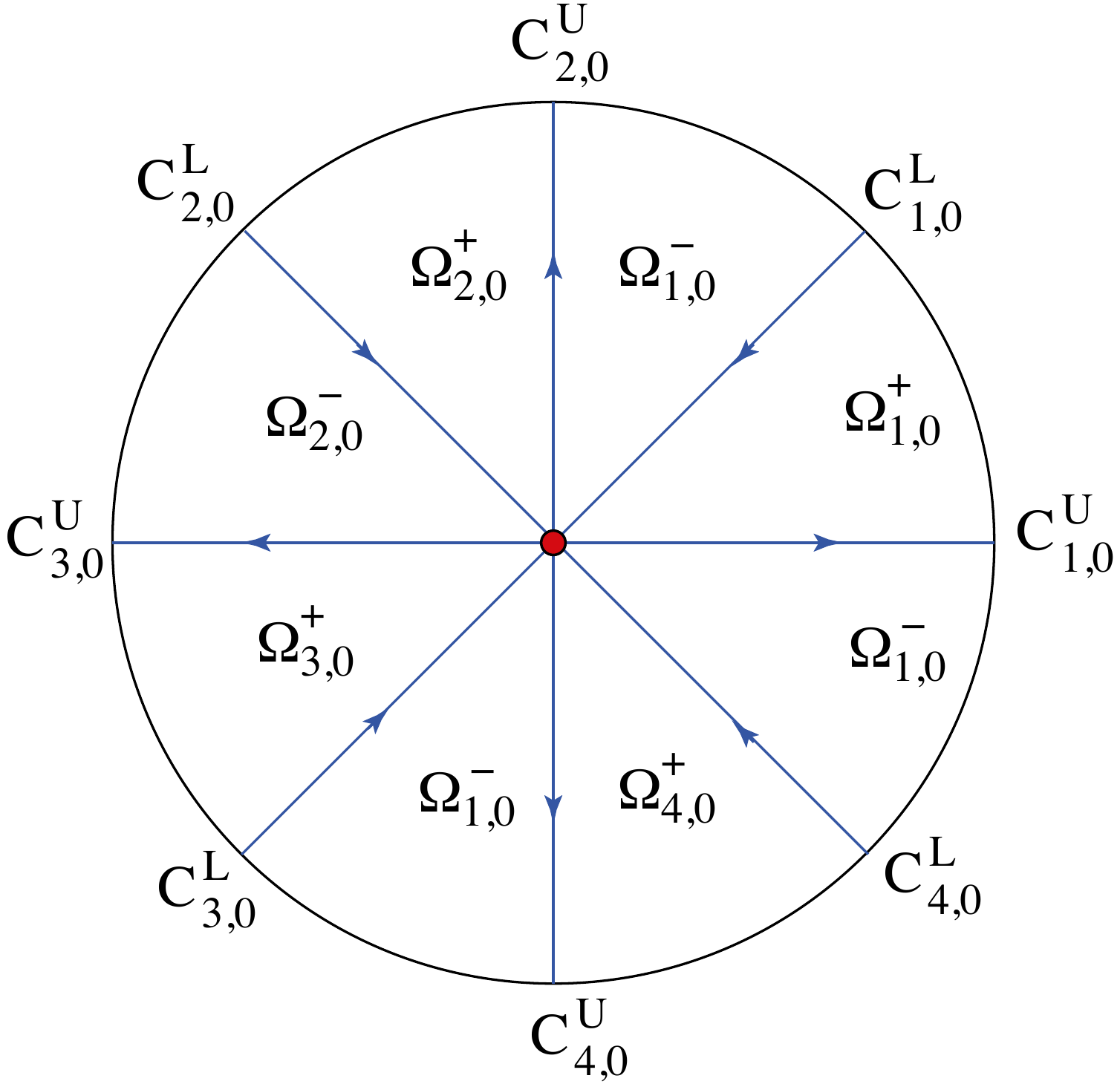}}\qquad\quad
\subfigure[$\eps\neq0$]{\includegraphics[width=5cm]{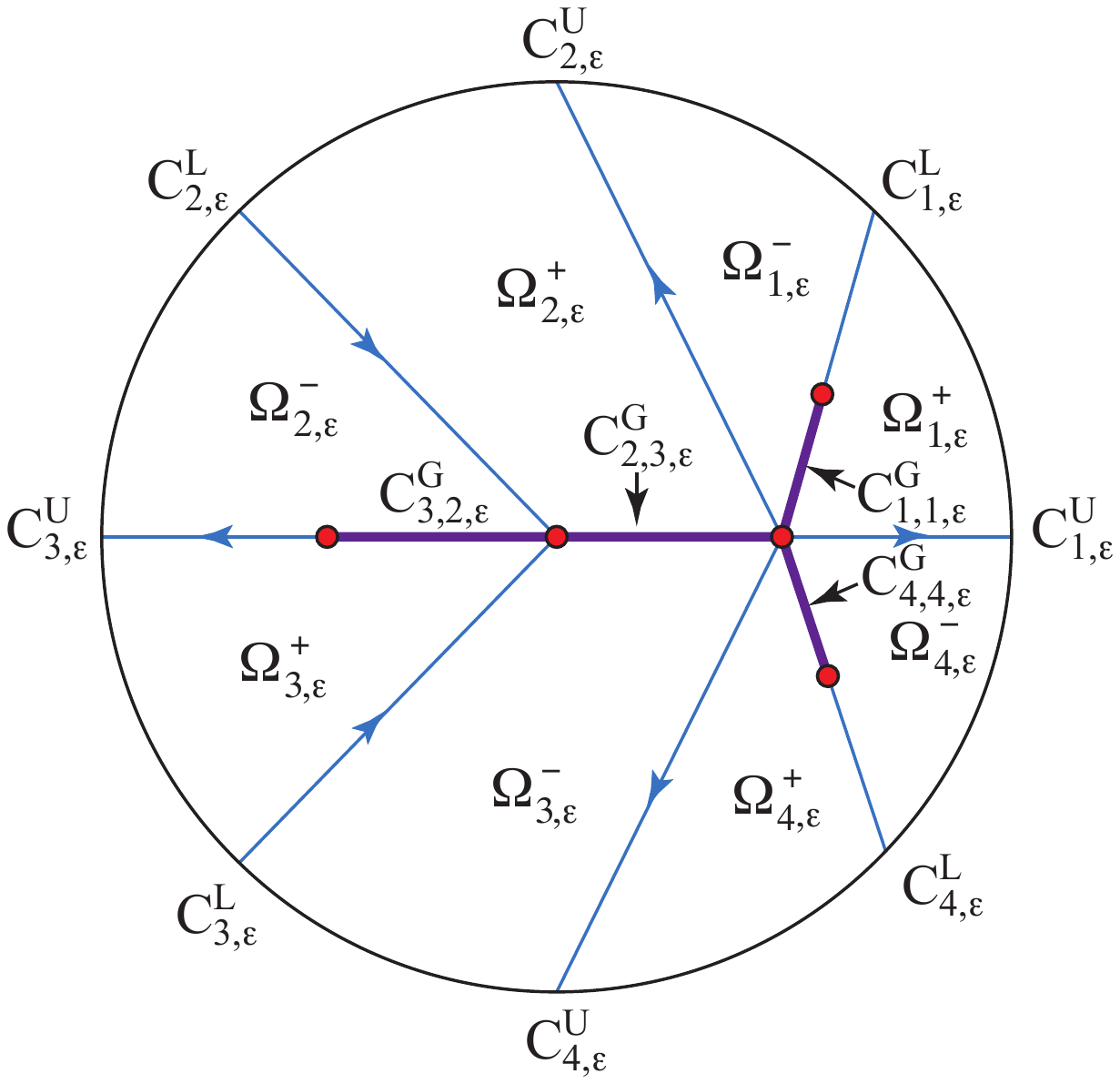}}
\caption{ The domains $\Omega_{k,\eps}^-$ and associated Stokes matrices.  In fact, for $\eps\neq0$ the domains are generically limited by spiraling curves in the neighborhood of the singularities: we have represented the boundaries by segments  for simplicity.} \label{Omega_Stokes} \end{center} \end{figure}

Turning now to the vector equation, one has first (see \cite{HLR})  a straightforward extension of the formal normal form  to  the deformed equation.
Indeed, we recall that a linear system $x^{k+1}y'=A(x)\cdot y$, $y\in\C^n$, with  irregular singular point of Poincar\'e rank $k$, and leading order term with distinct eigenvalues, has a diagonal normal form 
\begin{equation} y'=  \frac{1} {x^{k+1}}(\Lambda_0 + \Lambda_1 x+ \dots + \Lambda_kx^k)\cdot y, \qquad y\in \C^n,\label{Normal_form_0}\end{equation}
where the $\Lambda_j$ are diagonal matrices containing the $(k+1)n$ formal invariants of the system, and $\Lambda_0$ has distinct eigenvalues. This extends   to the deformed equation (\cite{HLR}):
\begin{equation} y'=  \frac{1} {p_\eps(x)} (\Lambda_0(\eps) + \Lambda_1(\eps) x+ \dots + \Lambda_k(\eps) x^k) \cdot y, \qquad y\in \C^n.\label{normal_form_eps}\end{equation}
For systems with a fixed formal normal form, one then wants an analytic classification: for this, one first fixes a DS domain $S_s$. For $\eps\in S_s$, on each sector $\Omega_{j,\eps}^\pm$ in the $x$-plane, one has   geometrically defined bases of solutions, defined up to an action of the diagonal matrices, which are such that in going from one sector to the next, the change of basis matrices generalize the Stokes matrices, and indeed exhibit the same alternation between upper and lower triangular. When one goes from $S_s$ to $S_{s'}$, the decomposition bifurcates, and one obtains different matrices; we will see that these are related to each other by algebraic relations. 
 
In \cite{HLR}, it was shown that these matrices (together with the formal invariants of \eqref{normal_form_eps}) provide a complete invariant for the system. That is to say, if one fixes the formal normal form, the generalized Stokes matrices defined for each DS domain $S_s$  determine the system: two unfoldings with the same invariants are analytically equivalent. The current paper addresses the {\it realization problem}:  fixing 
  formal invariants given by $k+1$ diagonal matrices $\Lambda_0(\eps), \dots, \Lambda_k(\eps)$ depending analytically of $\eps$, and given $C_k$ sets  of $2k$ unfolded Stokes matrices, one for each DS domain $S_s$, depending analytically on $\eps\in S_s$ with the same limit for $\eps\to 0$ under which conditions 
 does one have a system (\ref{vector}) corresponding to them? The answer turns out to be that the formal invariants and Stokes matrices define the same monodromy representations, and  exhibit some natural regularity near the discriminantal locus.  This is achieved in particular by realizing the equivalence class as the germ of a system defined over all of $\CP^1$, and exploiting compactness. In the course of doing this, we also discuss the development of {\it normal forms}, i.e. canonical representatives of equivalence classes.

 To prove the realization we proceed in two steps. The first step is to realize the modulus over each DS domain $S_s$ in parameter space. We find that any formal data and Stokes matrices depending analytically on the parameters can be realized. When the singular points are simple, we obtain a Fuchsian system  on $\CP^1$ with singular points at $x_1, \dots, x_{k+1}, R, \infty$,  where  $x_1, \dots, x_{k+1}$ are the zeroes of $p_\eps$, and $R, \infty$ are two fixed auxiliary points independent of $\eps$:
\begin{equation}y'=\left(\sum_{\ell=1}^{k+1} \frac{A_\ell(\eps)}{x-x_\ell} + \frac{\widetilde A(\eps)}{x-R}\right) \cdot y.\label{family_Fuchsian}\end{equation}
This realized family is unique up to gauge transformations which are constant in $x$. As a connection, it is indecomposable;  by, for example, diagonalizing    $A(0,0)$ and normalising certain terms, we can make it unique. The residue matrix at infinity of \eqref{family_Fuchsian} is simply $A_\infty(\eps)= -\sum_{\ell=1}^{k+1}A_\ell(\eps) - \widetilde A(\eps)$. 

\bigskip The next step is to realize the modulus over a full neighbourhood of the origin in parameter space, which we can take as a polydisk $\D_\rho$. For this, an additional condition is needed. Indeed, since the modulus characterizes families of systems of linear differential equations up to analytical equivalence, it is obviously a necessary condition that the realized families over the different DS domains be analytically equivalent over the intersection of DS domains. A necessary condition ensuring the equivalence is that the monodromy representations associated to the realizations over the different DS domains be the same (up to conjugacy as in the Riemann-Hilbert problem). The matrices conjugating the representations from one DS domain to the next must in addition form a trivial cocycle.   This condition turns out to be  sufficient, and can be expressed in terms of the modulus, i.e. the formal invariants and the Stokes matrices over each DS domain. 
Now, over each DS domain, we have realized unique normalized families of the form \eqref{family_Fuchsian}. Because of the normalization, they coincide on the intersection of the DS domains. 
Hence, we have realized the modulus over a polydisk $\D_\rho$ in parameter space minus  the discriminantal locus $\Delta=0$. To extend the realization to the whole polydisk $\D_\rho$, we first extend it to the regular points of $\Delta=0$ (where only two singular points coalesce).
We then fill in  for the remaining codimension two set of values of $\eps$ using  Hartogs' Theorem.

The particular case where the system of Stokes matrices at $\eps=0$ is irreducible is worth noticing: indeed, we can realize the data in a system
\begin{equation}\left(\sum_{\ell=1}^{k+1} \frac{A_\ell(\eps)}{x-x_\ell} \right) \cdot y,\label{family_irreducible}\end{equation}
which is Fuchsian when the singular points are distinct.

 Our realisation gives us a (local)  normalisation. We close the paper with a discussion of normal forms.

\section{Preliminaries}\label{sec:Preliminaries}
\subsection{The scalar equation and DS domains} We recall in this section the results of the unpublished work of Douady and Sentenac  \cite{DS05}. As discussed in  \cite{HLR} the construction of the modulus in  \cite{HLR} was governed by the dynamics $x(t)$ of the polynomial vector field \begin{equation}\dot x = \frac{dx}{dt}=p_\eps(x)\frac{\partial}{\partial x}, \qquad p_\eps(x) =x^{k+1} + \eps_{k-1} x^{k-1} + \dots+\eps_1x+\eps_0,\label{vf}\end{equation} on $\CP^1$. It will suffice for the moment to limit ourselves to the set of generic values \begin{equation}\Sigma_0=\{\eps : \Delta(\eps)\neq0\},\label{Sigma_0}\end{equation} where $\Delta(\eps)$ is the discriminant of $p_\eps(x)$. 

 The analysis centres on understanding the {\it real} flow lines in the $x$-plane, i.e. those given as the images of ${\rm Im}(t)$ = constant. Thus, we are restricting complex flow lines  to a foliation of real lines; as such, the dynamics near the singular points $p_\eps(x) = 0$ has certain special properties, not shared with generic real vector fields on the plane. On $\Sigma_0$, each singular point $x_\ell$ has an associated eigenvalue  $\lambda_\ell=p_\eps'(x_\ell)$. Then 
\begin{itemize} 
\item
The point $x_\ell$ is a radial node if $\lambda_\ell\in \R$. It is attracting (resp. repelling) if $\lambda_\ell<0$ (resp. $\lambda_\ell>0$).
\item The point $x_\ell$ is a center if $\lambda_\ell\in i\R$.
\item The point $x_\ell$ is a focus if $\lambda_\ell\notin \R\cup i\R$. It is attracting (resp. repelling) if ${\rm Re}(\lambda_\ell)<0$ (resp. ${\rm Re}(\lambda_\ell)>0$).
\end{itemize}
Moreover such a vector field never has a limit cycle. 

\begin{remark}\label{attrac_repell} We emphasize that whether a singular point is of $\alpha$-type (repelling) or $\omega$-type (attracting) depends importantly on the family of real flow lines one is considering in the complex plane, in particular their asymptotic direction. On the complex line, there is no such concept. A linearized example suffices to illustrate. Indeed, if one is considering $\dot x = ax$, one has the solution $x = \exp (at)$ over the complex line. Substituting $t= r e^{i\theta_j}$, for $\theta_j $ real and  varying $r$ through the positive reals, one has one solution $\exp (ae^{i\theta_1} r)$ which, when $r\to +\infty$, spirals into the origin when $\theta_1$ is chosen so that  ${\rm Re}(ae^{i\theta_1})$ is negative, and one solution $\exp (ae^{i\theta_2} r)$ which spirals outward when ${\rm Re}(ae^{i\theta_2})$ is positive and $r\to +\infty$.  \end{remark}

To understand the global structure of the real flow lines, the point $x=\infty$ serves as an organizing centre; indeed, the   vector field $v_\eps(x)$ has a pole of order $k-1$ there, and the system is structurally stable in the neighborhood of $\infty$ as $\eps$ varies. 

\begin{figure}\begin{center}
\includegraphics[width=4.5cm]{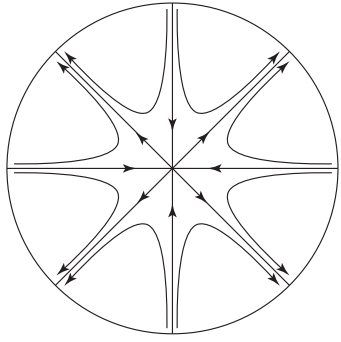}\caption{The separatrices of the pole at $\infty$.}\label{infinity}\end{center}\end{figure}

Among the solutions  ${\rm Im}(t) = \mathrm{constant}$, we  have   $2k$ separatrices at $x=\infty$, alternately attracting and repelling (see Figure~\ref{infinity}). On $\Sigma_0$, the dynamics is completely determined by the separatrices.  Following the separatrices in from infinity, either  backwards or forwards, one has:

\begin{itemize} 
\item For generic values of $\eps$ one lands at repelling (${\rm Re}(t)\rightarrow -\infty$) or attracting (${\rm Re}(t)\rightarrow  \infty$) singular points $x_\ell$ of focus or radial node type. For such an $\eps$, there exists trajectories joining the singular points two by two and choosing one trajectory for each pair yields  a tree graph (see Figure~\ref{Omega_Stokes} (b), which we call the \emph{Douady-Sentenac tree}. We denote by $\Sigma_1$ the set of generic values of $\eps$ in $\Sigma_0$ for which there are no homoclinic connections between separatrices of $\infty$.
\item The sets of generic $\eps$ are separated by the closures of bifurcation sets of real codimension $1$, where a homoclinic connection occurs between an attracting separatrix and a repelling separatrix of infinity: there is then a real integral curve flowing out from infinity in the $x$-plane and flowing back to infnity in finite time. On these bifurcation sets, the  singular points can be split into two sets $I_1$ and $I_2$ and 
\begin{equation}\sum_{\ell\in I_1} \frac1{p_\eps'(x_\ell)}\in i\R.\label{bifur}\end{equation} One sees this by integrating the form $dt$ along a homoclinic orbit, and evaluating residues.
When $I_1$ is a singleton, the corresponding singular point is a center. 
\end{itemize}

The closures of these codimension one sets partition their complement in $\eps$-space into a certain number of connected components, which we call {\it Douady-Sentenac domains}, or {\it DS domains}. As one moves through the closure of the real codimension $1$ homoclinic locus, the limit points of attachment of the separatrices will change. These attachments are constrained, and the results of Douady and Sentenac tell us that there are 
$$C_k= \frac{\binom{2k}{k}}{k+1}$$
ways of doing it (see Figure~\ref{Omega_Stokes}(b)), thus dividing the complement of the closure of the homoclinic set into $C_k$ open sets $\widetilde{S}_s\subset \Sigma_0$ in parameter space.  We will see how this happens below.

\subsection{The sectors in $x$-space over $\C$}

The sectors will be defined in several steps: we first define sectors $\widetilde{\Omega}_{j,\eps,S_s}^\pm$, which cover $\C$ minus a set of measure $0$, and then their enlargements $\Omega_{j,\eps,S_s}^\pm$, which cover $\C$ minus the zeros of $p_\eps(x)$. We modify them later so that they are adequate on a disk. In this section we limit ourselves to sectors on $\C$. 

Depending on the meaning one gives to the word ``explicit'', the flow lines of the scalar equation are explicitly solvable. Indeed, one has a $t(x)$, which globally is multi-valued  with multi-valued inverse,  defined by
\begin{equation} t(x) =\int\frac{dx}{p_\eps(x)} .\label{def:time}\end{equation}
 
To fix ideas, when  $k=1$, one can solve further:
\begin{equation}
t(x)= \begin{cases} -\frac1{x}, &\eps_0=0,\\
\frac1{\sqrt{-2\eps_0}}\log\frac{x-\sqrt{-\eps_0}}{x+\sqrt{-\eps_0}},&\eps_0\neq0.\end{cases}\label{example:time}\end{equation}
For $\eps_0\neq0$, the image of a disk $\D$ is the complement of a line of periodic holes located $\frac{2\pi i}{\sqrt{-2\eps_0}}$ apart. At the limit when $\eps_0=0$, all holes but one disappear to infinity.

More generally, since $\infty$ is a pole of order $k-1$ for $v_\eps(x)$ (hence a regular point if $k=1$), it can be reached in finite time. 
Hence, the image of $\infty$ in $x$-space consists of finite point(s) in $t$-space. 
Also, the time $t$ is ramified at $\infty$ for $k>1$, since $t\sim -\frac{k}{x^k}$ near $\infty$. 
For $\eps=0$, the image in $t$-space of a disk $\D$ in $x$-space is the outside of a disk on a $k$-sheeted Riemann surface. For $\eps\neq0$, the map $t$ is multivalued with the different images periodically spaced. The image of a disk is the complement of a countable number of periodically spaced holes placed on a branched $k$-sheeted Riemann surface. The periods between the holes tend to $\infty$ when $\eps\to 0$. (More details in \cite{HLR} and below.)

The fact that the integral curves are given in terms of a simple integral gives quite a lot of control over the behaviour of solutions.
For $\eps$ in a DS domain $\widetilde S_s$ in parameter space, we will   get a decomposition of the complement in the $x$-plane of the separatrices emerging from infinity in the complex $x$-plane  into $2k$ (generalized) sectors $\widetilde\Omega_{j,\eps,S_s}^\pm$,  ordered cyclically at infinity in a {\it clockwise} direction (around infinity!) $\widetilde{\Omega}_{1,\eps,S_s}^+$,  $\widetilde{\Omega}_{1,\eps,S_s}^-$,  $\widetilde{\Omega}_{2,\eps,S_s}^+$,  $\widetilde{\Omega}_{2,\eps,S_s}^-$, \dots,  $\widetilde{\Omega}_{k,\eps,S_s}^-$, as in Figure~\ref{Omega_Stokes}(b). The clockwise direction at infinity will become the standard anti-clockwise direction when viewed from the finite regions of the plane. In a neighbourhood of $x=\infty$ the sectors are well defined: they are simply the complement of the separatrices in the bifurcation diagram at infinity. The sign $\pm$ reflects whether the sector has its outgoing separatrix on its right ($+$) or left ($-$) hand side, when looking outward from infinity. 

The question is then what happens as one extends inwards. Following the separatrices inwards, each  sector will be adherent to two vertices $x_\alpha$, $x_\omega$, with  $x_\omega$ attracting and $x_\alpha$ repelling, and both points being  singular points of $v_\eps(x)$. The  boundary  of each sector will consist of three sides:
\begin{itemize} 
\item an 
attracting separatrix of $\infty$ (i.e. going to infinity) emerging from a singular point $x_\alpha$ of $\alpha$-type (a source) of   $v_\eps$: in $t$-coordinate it is represented by a horizontal half-line (real values of time) starting from infinity to the left (corresponding to the singular point $x_\alpha$) to a finite image in $t$ of $x=\infty$;
\item a repelling separatrix of infinity going to a singular point $x_\omega$ of $\omega$-type (a sink) of some $v_\eps$: in $t$-coordinate, it is represented by a horizontal half-line starting at a finite image of $x=\infty$ and directed to the right; 
\item and a curve (not uniquely defined) from $x_\alpha$ to $x_\omega$, corresponding to a real  trajectory  of $v_\eps$ from $x_\alpha$ to $x_\omega$. 
 \end{itemize}
 
Thus the  sector is topologically a triangle, though of course    most of the time   the boundaries of the  sectors in $x$-space will be  spiraling curves.  While giving the formulae for the boundary is impossible, because of  the differential equation $\frac{dx}{dt}=p_\eps(x)$,  the sector in $t$-space is fairly simple, and is given by  a horizontal strip (see Figure~\ref{strip_horizontal}).

\begin{figure} [ht!]
\begin{center}
\subfigure[Sector on $\C$]{\includegraphics[width=5.5cm]{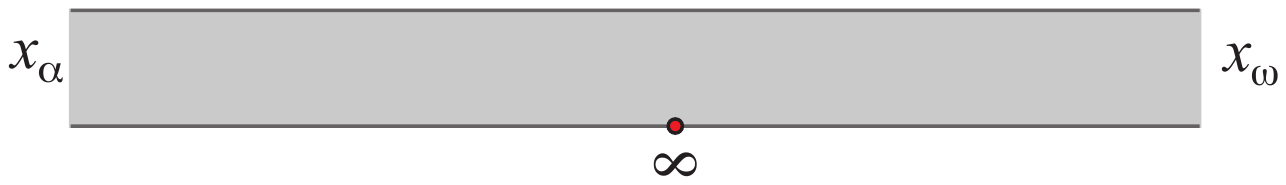}}\qquad\subfigure[Sector on a disk]{\includegraphics[width=5.5cm]
{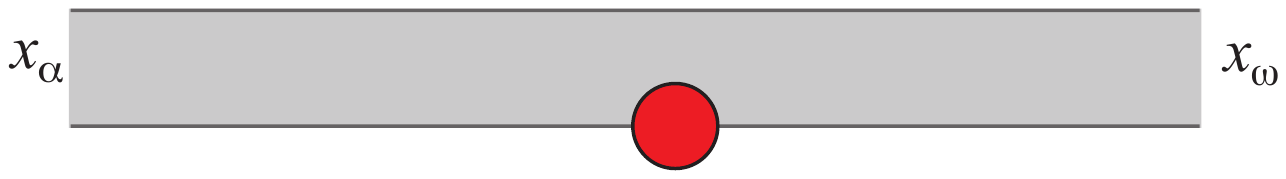}}
\caption{ A strip in $t$-space whose image is a sector $\Omega_{j,\eps, S_s}^-$ in $x$-space. } \label{strip_horizontal} \end{center} \end{figure}
\medskip

\noindent{\bf Construction of a pair of  sectors $\widetilde{\Omega}_{j_1,\eps}^+, \widetilde{\Omega}_{j_2,\eps}^-$ in $x$-space.}   We examine the construction in a bit more detail, giving now a pair of sectors as the image of a (wider) horizontal strip in $t$-space. Let us choose a $\widetilde{\Omega}_{j_1,\eps}^+$ for some $j_1$, and consider the two separatrices on its boundary. These can be thought of as the image of one line ${\rm Im}(t)$ = constant in $t$-space,  the union of two half-lines, with the first emerging at ${\rm Re}(t)= -\infty$ from some  singular point $x_\alpha$, and  arriving at some time $t_0$ at infinity in the $x$ plane; the second half-line starts at  $t_0$ and then flows off to an $x_\omega$ at ${\rm Re}(t)= +\infty$. Moving inwards in the $x$-plane near infinity in the sector $\widetilde{\Omega}_{j_1,\eps}^+$ (and so downwards in the $t$-plane), the lines ${\rm Im}(t)$ = constant now become flow lines from $x_\alpha$ to  $x_\omega$. This patterns continues downward in the $t$-plane, until one hits a line ${\rm Im}(t)$ = constant
 which again goes through $x=\infty$ at a time $t_1$, which this time will be the boundary of a domain $\widetilde{\Omega}_{j_2,\eps}^-$, now viewed as a part of the   plane above the $t_1$-line. One has then created a horizontal  strip in the $t$ plane, bounded above by the line through $t_0$, and below by the line through $t_1$, which   corresponds to the union of two sectors $\widetilde{\Omega}_{j_1,\eps}^+, \widetilde{\Omega}_{j_2,\eps}^-$; we choose the demarcation line between the two sectors, somewhat arbitrarily, to be  the image of the  real line in $t$-space through $(t_1+t_0)/2$ (see Figure~\ref{strip_horizontal_double}).
 \begin{figure} 
\begin{center}
\subfigure[Sectors on $\C$]{\includegraphics[width=5.5cm]{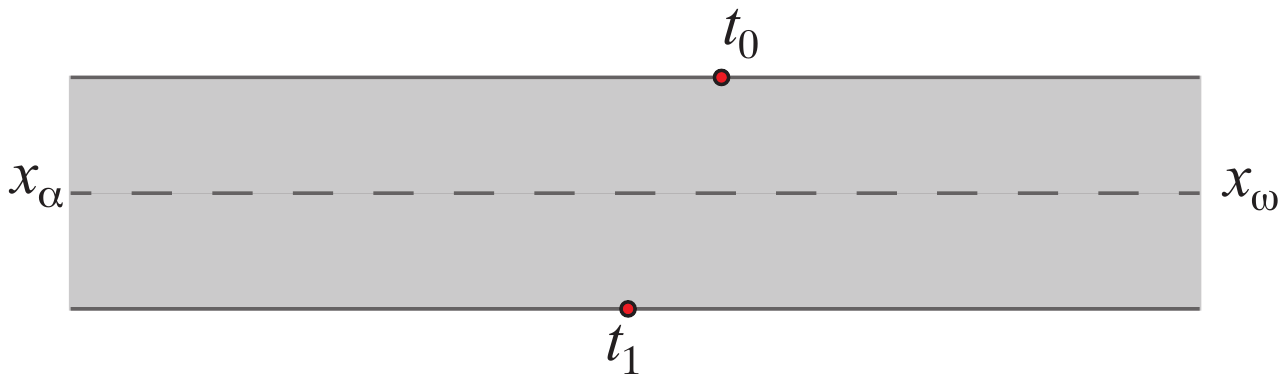}}\qquad\subfigure[Sectors on a disk]{\includegraphics[width=5.5cm]{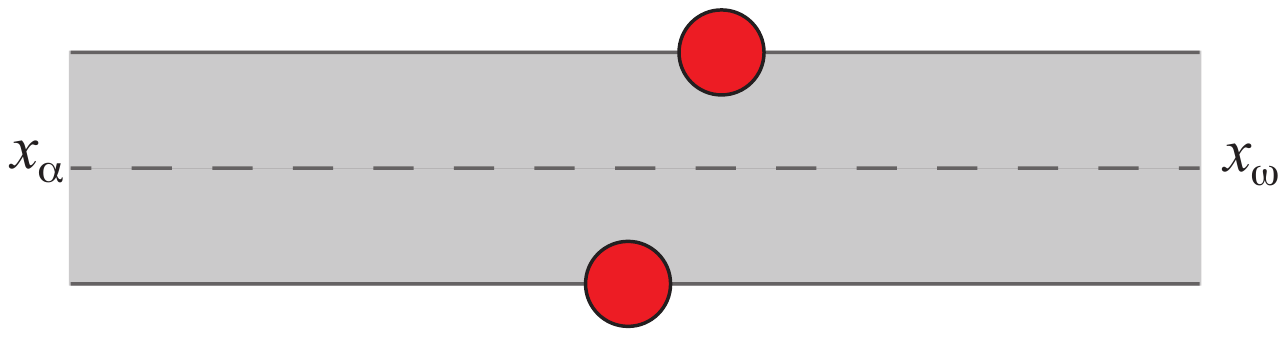}}
\caption{ A strip in $t$-space whose image is the union of sectors $\widetilde{\Omega}_{j,\eps,S_s}^+$ and $\widetilde{\Omega}_{\sigma(j),\eps,S_s}^-$ in $x$-space. } \label{strip_horizontal_double} \end{center} \end{figure}

 The above construction defines the sectors; it also does a bit more. Indeed, first of all,  it pairs the sectors, with each sector $\widetilde{\Omega}_{j,\eps,S_s}^+$ (and its enlargement $\widetilde{\Omega}_{j,\eps,S_s}^+$) with a positive sign paired with one sector $\widetilde{\Omega}_{\sigma(j),\eps,S_s}^-$ (and its enlargement $\Omega_{\sigma(j),\eps,S_s}^-$) with a negative sign. It also gives us, for each pair, a path joining the positive and negative sectors, simply as the image $\gamma_j$ in $x$-space of the segment joining $t_0$ and $t_1$; the curve $\gamma_j$ is then a path out from infinity, going back to infinity (Figure~\ref{skeleton}). As Douady and Sentenac remark, the various $\gamma_i$ do not intersect; their complement in a  sufficiently large finite disk   is a union of disjoint open sets, each containing a single zero of $p_\eps$. Homotopically, our flow lines in $x$-space give  a diagram consisting of a disk with points representing the sectors $\widetilde{\Omega}_{1,\eps,S_s}^+$,  $\widetilde{\Omega}_{1,\eps,S_s}^-$,  $\widetilde{\Omega}_{2,\eps,S_s}^+$,  $\widetilde{\Omega}_{2,\eps,S_s}^-$, \dots,  $\widetilde{\Omega}_{k,\eps,S_s}^-$, in that order, on its boundary, and non-intersecting chords (the $\gamma_j$)  joining the sectors which are paired. Dually, one has the Douady-Sentenac tree joining the various zeroes of $p_\eps$. Douady and Sentenac show that this (``Douady-Sentenac'') invariant characterizes the DS domains, and it is the count of the possible diagrams that gives us a Catalan number.
\begin{figure}
\begin{center}
\subfigure[The separating graph]{\includegraphics[width=5.5cm]{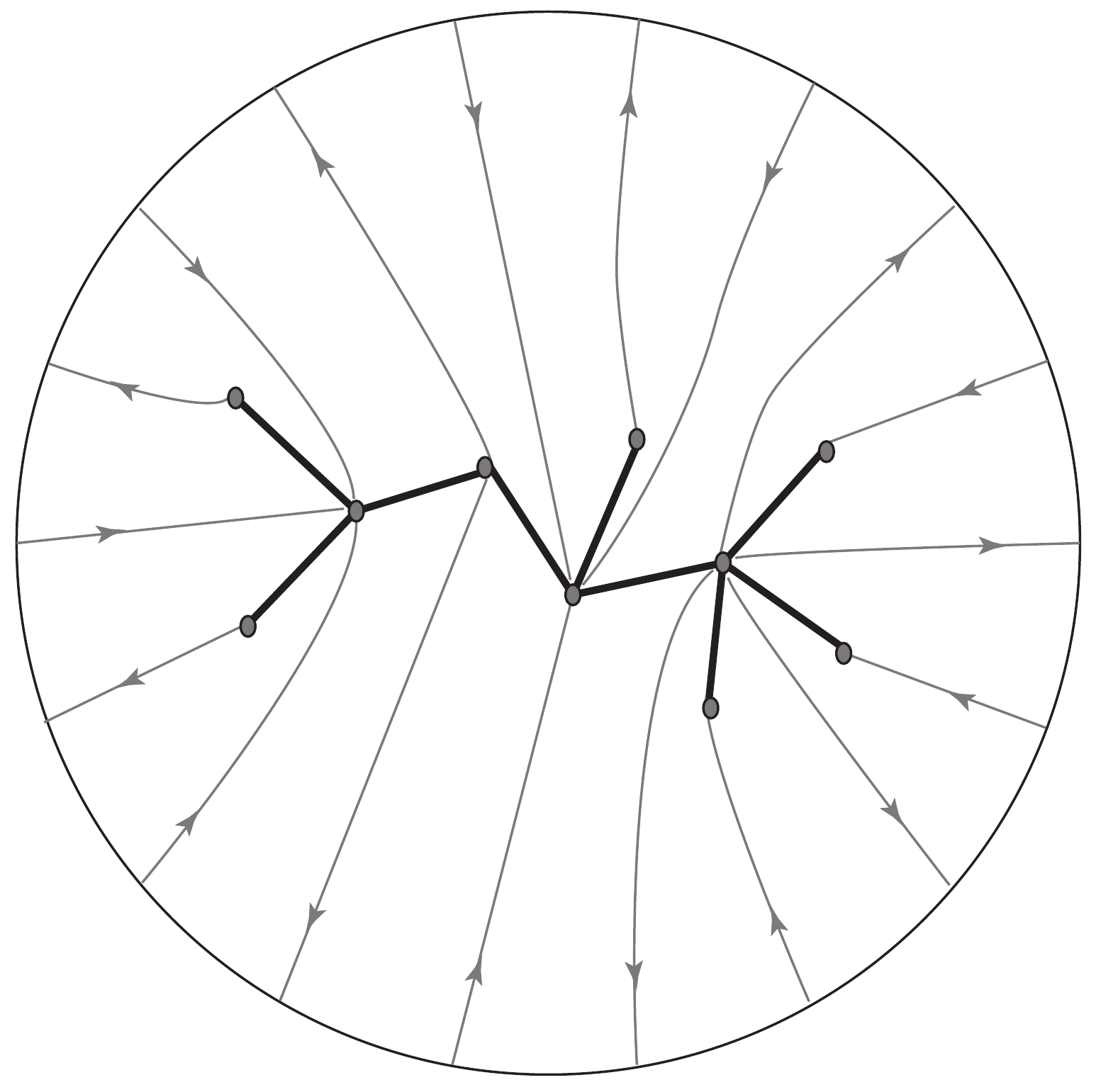}}\qquad \subfigure[The curves $\gamma_i$]{\includegraphics[width=5.5cm]{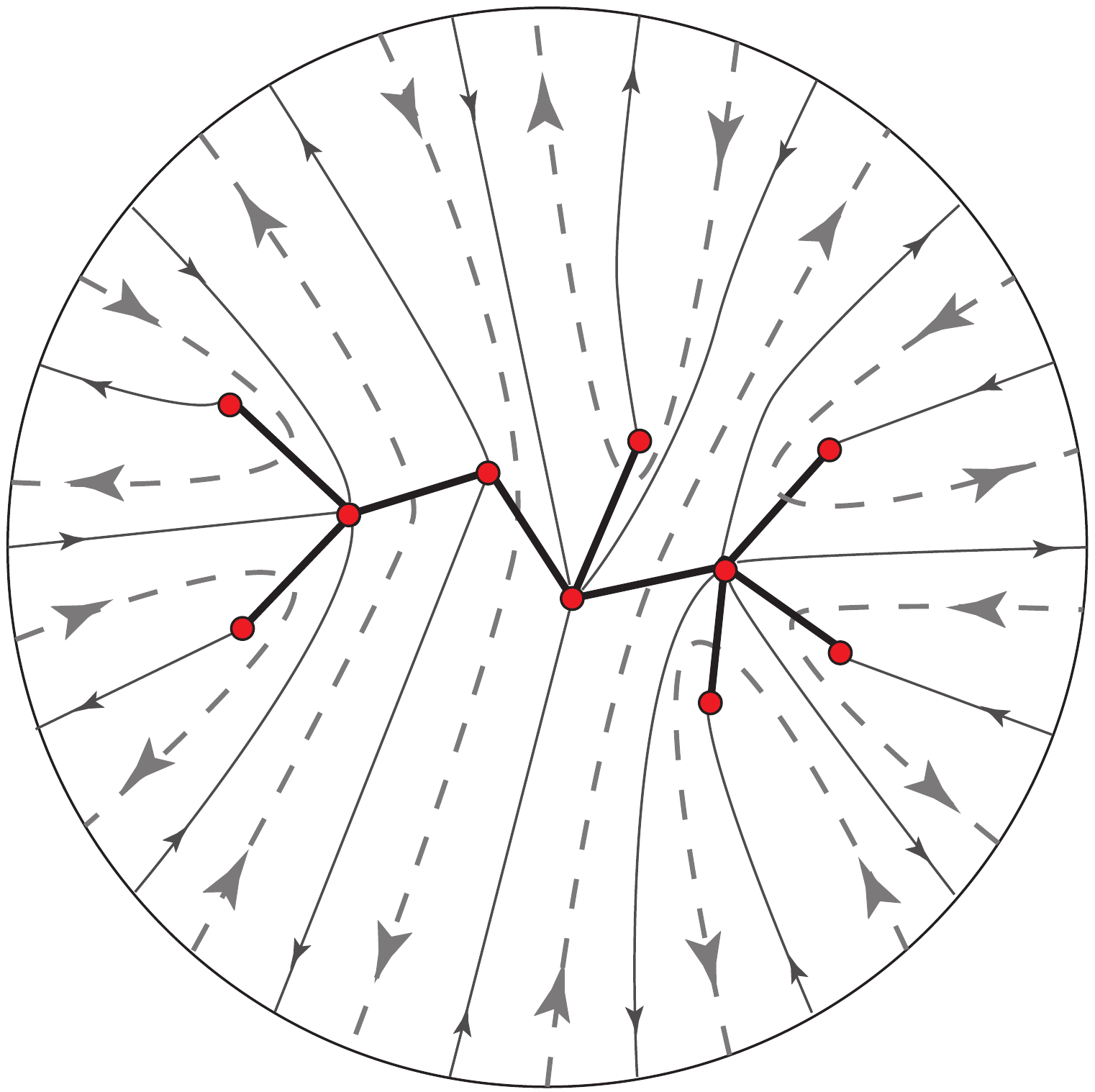}}\caption{The separating graph formed by the separatrices landing at the singular points and the curves $\gamma_i$ (in dotted lines) used to calculate the $\tau_i$.}\label{skeleton}\end{center}\end{figure}
 
 One also has more; the difference 
 $$\tau = t_0-t_1 = \int_\gamma dt =\int_\gamma \frac{dx}{p_\eps(x)}$$ gives us a complex parameter in the upper half plane, and Douady and Sentenac show that the set of such parameters as one ranges through the paired sectors parametrizes the DS domain:
  
 \begin{theorem}\label{thm:DS}\cite{DS05} There exists a biholomorphism $\Phi_s:\widetilde{S}_s\rightarrow \H^k$, where $\H$ is the upper half-plane. The map $\Phi_s$ is defined as follows: $\Phi_s(\eps)= (\tau_1, \dots, \tau_k)$, where $$ \tau_j = \int_{\gamma_j}  \frac{dx}{p_\eps(x)}.$$
 In particular, $\widetilde{S}_s$ is contractible, and so, simply connected. The set $\Phi_s^{-1}((i\R^+)^n)$, which we call the \emph{spine} of $\widetilde{S}_s$, corresponds to polynomial vector fields with only real eigenvalues at each singular point. \end{theorem}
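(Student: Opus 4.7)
The plan is to verify, in order, that $\Phi_s$ lands in $\H^k$, that it is holomorphic, that it is a bijection, and that its inverse is holomorphic, and finally to deduce contractibility and the spine statement as corollaries.

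\emph{Image in $\H^k$.} From the pairing construction preceding the theorem, each chord $\gamma_j$ is realized as the image under $x(t)$ of the vertical segment from $t_1$ to $t_0$ in the $t$-plane, where $t_0$ and $t_1$ are the two $\infty$-images bounding the associated horizontal strip from above and below respectively. Hence $\tau_j = t_0 - t_1$ has strictly positive imaginary part, so $\Phi_s(\widetilde{S}_s) \subset \H^k$.

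\emph{Holomorphicity.} Near any $\eps_0 \in \widetilde{S}_s$ the zeros of $p_\eps$ depend analytically on $\eps$ without collision and the combinatorial DS diagram is locally constant, so each chord $\gamma_j$ can be chosen to vary continuously with $\eps$ without crossing singular points. Then $\tau_j(\eps)$ is a period integral of a holomorphically varying family of meromorphic 1-forms, hence holomorphic in $\eps$.

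\emph{Bijectivity via strip gluing.} Surjectivity is the substantive step, and I would proceed by the inverse construction. Given $(\tau_1, \dots, \tau_k) \in \H^k$, take $k$ abstract horizontal strips in the $t$-plane of heights $\mathrm{Im}(\tau_j)$, and glue them pairwise along their horizontal half-line boundaries following the fixed combinatorial template of $\widetilde{S}_s$, with relative translations prescribed by $\mathrm{Re}(\tau_j)$ and by the combinatorics. The quotient is an open Riemann surface carrying the holomorphic 1-form $dt$; compactifying by adjoining the $k+1$ missing singular points (where $dt$ acquires simple poles) and the images of $x=\infty$ (where, after accounting for the ramification index $k$, the form extends with the correct vanishing order) produces a compact Riemann surface of genus zero, hence biholomorphic to $\CP^1$ via a uniformizing coordinate $x$. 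Writing the form as $dx/P(x)$ and normalising the affine action $x \mapsto ax + b$ so that $P$ is monic of degree $k+1$ with vanishing $x^k$-coefficient selects a unique $\eps$ with $p_\eps = P$. For injectivity, two parameters giving the same $\tau$-values produce the same gluing data, hence identical normalised polynomials.

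A holomorphic bijection between complex manifolds of equal dimension is a biholomorphism, so $\Phi_s$ is a biholomorphism onto $\H^k$; contractibility (and hence simple connectivity) of $\widetilde{S}_s$ then follows from that of $\H^k$. For the spine statement, $\tau_j \in i\R^+$ for every $j$ means every chord $\gamma_j$ lifts to a vertical segment in $t$-space, which, combined with the residue relations $\oint_{x_\ell} dt = 2\pi i / p_\eps'(x_\ell)$ and the observation that the $\tau_j$'s generate the relevant period module together with these residues, forces each $p_\eps'(x_\ell)$ to be real, so every singular point is a radial node. The main obstacle is the genus-zero verification in the gluing step: it requires an Euler-characteristic computation on the cell decomposition consisting of the $k$ strips, their $2k$ horizontal boundary arcs identified by the DS combinatorics, and the added singular and $\infty$-points, followed by a check that the extended 1-form has exactly the expected local orders at the punctures.
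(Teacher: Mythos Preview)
The paper does not prove this theorem: it is stated with attribution to the unpublished Douady--Sentenac preprint \cite{DS05} and no argument is supplied. So there is nothing in the paper to compare your proposal against; what you have written is a reconstruction of the Douady--Sentenac proof itself.

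As such a reconstruction, your outline is correct and follows the standard translation-surface approach. Two comments. First, the spine argument can be made cleaner using the remark the paper places immediately after the theorem: each $\tau_j$ is $2\pi i$ times an integer combination of the $\nu_\ell = (p_\eps'(x_\ell))^{-1}$, and the DS tree structure gives an integer-coefficient inverse (subject to $\sum_\ell \nu_\ell = 0$); hence $\tau_j \in i\R$ for all $j$ forces every $\nu_\ell$, and so every $p_\eps'(x_\ell)$, to be real. Your phrasing ``the $\tau_j$'s generate the relevant period module together with these residues'' gestures at this but is vaguer than it needs to be. Second, the genus-zero step you flag as the main obstacle is lighter than you suggest: the union of the open strips glued along half-lines is simply connected (induct on the number of strips, each gluing of a strip along a half-line to a simply connected set preserves simple connectivity), so the one-point compactification at the ramified $\infty$-point together with the $k+1$ added poles yields a sphere directly, without needing a full Euler-characteristic count.
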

 
Note that each Douady-Sentenac parameter $\tau_j$ is computable as  a sum of residues $\tau_j=2\pi i \sum_{\ell\in J} (p_ \eps'(x_\ell))^{-1}$ at the zeroes $x_\ell$ surrounded by $\gamma_j$. The fact that the $\gamma_j$ partition the $x$-plane into regions each containing only one zero of $p_\eps(x)$ then tells us that each residue in turn is computable from the $\tau_j$: there is a bijective linear map between the $\tau_j$ and the  $p_ \eps'(x_j))^{-1}$ (satisfying $\sum_{j=1}^{k+1}p_ \eps'(x_j))^{-1}=0$). The map between the space $\Sigma_0$ of polynomials $p_\eps$ with distinct zeroes and the quantities $p_ \eps'(x_j))^{-1}$ is many to one, a branched covering, but of course bijective to its image when restricted to a $\widetilde{S}_s$.  

The radial rescaling on $\C$ and its effect on polynomials has some interesting consequences. The bifurcation diagram of the ODE \eqref{vf}  rescales naturally when  changing $(x,t) \mapsto (r x, r^{-k}t)$, with $r\in \R^+$, which induces the parameter change \begin{equation}(\eps_{k-1}, \dots, \eps_1,\eps_0)\mapsto (r^2\eps_{k-1}, \dots, r^k\eps_1, r^{k+1}\eps_0) \label{parameter_conic}\end{equation}
This suggests a natural ``norm'' in $\eps$-space given by 
\begin{equation} \pl\eps\pl:=\max\left(\left|\eps_{0}\right|^{1/(k+1)},
\ldots,\left|\eps_{k-1}\right|^{1/2}\right)\label{norm_eps},\end{equation}
which is essentially the maximum   norm of the roots of $p_\eps$, and the following equivalence relation on the set of $\eps$:
\begin{equation}
\eps=\left(\eps_{0},\dots,\eps_{k-1}\right)\simeq\eps'=\left(\eps_{0}',\dots,\eps_{k-1}'\right)
\Longleftrightarrow\exists r\in\mathbb{R}^+\,:\:\eps_{j}'=r^{k+1-j}\eps_{j}.\label{equiv_eps}\end{equation}
This yields a conic structure in the parameter space, which is foliated by curves (equivalence classes) $\{(r^2\eps_{k-1}, \dots, r^k\eps_1, r^{k+1}\eps_0)\: :\: r\in\R^+\}$, with fixed $\eps$. Both the sets $\widetilde{S}_s$ and the strata of the bifurcation diagram are unions of equivalence classes, which can be described by their intersection with the sphere $\pl\eps\pl=\rho$ for some positive $\rho$. 

\begin{lemma}\label{lemma_estimate_tau}
 Let $S_s$ be a DS domain. Then there exists $C>0$ such that, for each $\eps\in S_s$ with $\pl \eps\pl<1$, if $\Phi_s(\eps)= (\tau_1(\eps), \dots, \tau_k(\eps))$, then 
 \begin{equation}|\tau_j(\eps)|\geq \frac{C}{\pl \eps\pl^k}.\label{tau:estimate}\end{equation}
 \end{lemma}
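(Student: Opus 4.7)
The proof relies on the conic symmetry of the problem together with a compactness argument on the unit sphere $\{\pl\eps\pl=1\}$. First, under the rescaling $(x,t) \mapsto (rx, r^{-k}t)$, $r \in \R^+$, one has $\eps_j \mapsto r^{k+1-j}\eps_j$ (so that $\pl\eps\pl \mapsto r\pl\eps\pl$ by \eqref{norm_eps}), while the time rescales as $t \mapsto r^{-k}t$. Since $\tau_j$ is a difference of two finite times at infinity in $t$-space, it transforms as $\tau_j(\eps^{(r)}) = r^{-k}\tau_j(\eps)$, where $\eps^{(r)}$ denotes the rescaled parameter. Because $S_s$ is a union of equivalence classes of \eqref{equiv_eps}, it is invariant under this $\R^+$-action, and the product $\pl\eps\pl^k|\tau_j(\eps)|$ is constant on each class. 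Given any $\eps \in S_s$ with $\pl\eps\pl<1$, rescaling by $r=1/\pl\eps\pl$ lands one at some $\eps' \in S_s$ with $\pl\eps'\pl = 1$ and $|\tau_j(\eps)| = \pl\eps\pl^{-k}|\tau_j(\eps')|$. The lemma therefore reduces to showing that $|\tau_j|$ has a positive infimum on $S_s \cap \{\pl\eps\pl=1\}$.

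The sphere $\{\pl\eps\pl=1\}$ is compact in $\C^k$, and on $S_s \cap \Sigma_0$ the function $\tau_j$ is continuous via the residue sum $\tau_j = 2\pi i \sum_{\ell \in J_j}(p_\eps'(x_\ell))^{-1}$, where the index set $J_j$ of roots on one side of $\gamma_j$ is locally constant on $S_s$. By Theorem~\ref{thm:DS} one has $\tau_j \in \H$ on $\widetilde{S}_s$, so $\tau_j \neq 0$ at every interior point. It therefore suffices to show that $|\tau_j|$ does not tend to $0$ along any sequence $\eps_n \in S_s$ with $\pl\eps_n\pl=1$ converging in the sphere to a limit $\eps_*$.

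Two cases for $\eps_*$ arise. If $\eps_* \in \Sigma_0$, then $\eps_*$ lies on the homoclinic bifurcation locus bounding $S_s$; the residue sum extends continuously to $\eps_*$ and yields a nonzero real limit, equal to the period of the limiting homoclinic orbit in $t$-space (the degenerate case of a homoclinic orbit of zero period, where $\tau_j \to 0$, is a higher-codimension phenomenon not reached from within a single DS domain, cf. the discussion below). If $\eps_* \in \{\Delta=0\}$, a cluster of roots of $p_\eps$ coalesces; the individual residues $(p_\eps'(x_\ell))^{-1}$ diverge, but the partial sum over the cluster has a finite limit given by the residue of $dx/p_\eps$ at the multiple zero. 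Consequently $\tau_j$ either diverges (when $\gamma_j$ separates the cluster, so not all divergent terms cancel) or tends to a nonzero limit (when the cluster lies entirely on one side of $\gamma_j$). In all cases $|\tau_j|$ stays bounded away from zero, completing the proof.

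The main obstacle is the boundary analysis of the third paragraph, and in particular ruling out accidental vanishing of $\tau_j$ on $\partial S_s \cap \Sigma_0$. A vanishing $\tau_j$ corresponds, via the Douady-Sentenac biholomorphism $\Phi_s$, to approaching the corner $0 \in \partial \H$ in the $j$th factor of $\H^k$; simultaneously $\mathrm{Im}(\tau_j) = 0$ (a homoclinic) and $\mathrm{Re}(\tau_j)=0$ (vanishing period). This is a real codimension-two condition, occurring only at the common boundary of several DS domains, not at the closure of a single $S_s$. Establishing this last point rigorously, by combining the Douady-Sentenac combinatorics with the residue computation, is the technical heart of the argument.
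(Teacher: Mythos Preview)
Your strategy coincides with the paper's: exploit the conic rescaling to reduce to the unit sphere $\{\pl\eps\pl=1\}$, then argue that $\min_j |\tau_j|$ has a positive infimum there. The scaling portion and the invocation of the invariance of $S_s$ under \eqref{equiv_eps} are identical to the paper.

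Where you differ is in the justification of the positive infimum. The paper argues tersely via the bijective linear relation between $(\tau_1,\dots,\tau_k)$ and the residues $\nu_\ell=p_\eps'(x_\ell)^{-1}$: on $\{\pl\eps\pl=1\}$ each $|\nu_\ell|=\prod_{m\neq\ell}|x_\ell-x_m|^{-1}$ is bounded below (the roots being bounded above) and diverges as one approaches $\Delta=0$, and then asserts that ``a bound for one is a bound for the other''. You instead run an explicit boundary case analysis (interior of $\widetilde S_s$, homoclinic locus, discriminant locus). Your route is more transparent about where the difficulty lies: the paper's phrase is heuristic, since a coordinate-wise lower bound on the $|\nu_\ell|$ does \emph{not} directly give a coordinate-wise lower bound on $|\tau_j|=2\pi\bigl|\sum_{\ell\in J_j}\nu_\ell\bigr|$ when $|J_j|>1$ (cancellation is a priori possible). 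The gap you flag in your final paragraph---ruling out $\tau_j\to 0$ along the homoclinic boundary as a codimension-two degeneration not reached from the closure of a single $\widetilde S_s$---is essentially the same point the paper's argument leaves unaddressed; you have simply localised it more sharply. The paper's shortcut through the $\nu_\ell$ disposes of the $\Delta=0$ boundary in one line, which your treatment also achieves but at greater length.
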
 
 \begin{proof}    We consider the intersection of the subspace $\pl \eps\pl=1$ in parameter space with $S_s$. The quantity $C=\min_{\pl \eps\pl=1} \{|\tau_1(\eps)|,\dots, |\tau_k(\eps)|\}$ is bounded below on this intersection by a positive constant $C$. Indeed, we have seen that we have a bijective linear map relating the 
 $\tau_j$ to the allowed values of the quantities  $\nu_\ell=p'(x_\ell)^{-1} = \prod_{\ell\neq k} (x_\ell-x_k)^{-1}$ (note that $\sum_{\ell=1}^{k+1} \nu_\ell=0$), so that a bound for one is a bound for the other. On the other hand, the product $\prod_{\ell\neq k} (x_\ell-x_k)^{-1}$ is bounded below, as the roots are bounded above on the set we are considering. In particular, the product goes  to infinity as one approaches $\Delta_0$.   
 
 Then, the rescalings of the roots of $p_\eps$ and \eqref{parameter_conic} yield $|\tau_j(\eps)|= \frac{\left|\tau_j(\eps/\pl \eps\pl)\right|}{\pl \eps\pl^k}$, from which the conclusion follows.
\end{proof}

\begin{remark}The perceptive reader will have noticed a problem with our construction. When dealing with sectors in $\C$, the construction has no limit when approaching the boundary of a DS domain $\widetilde{S}_s$ where $t_0-t_1\in \R$. Also, the image of a disk $D_R$ in $x-space$ is, for small $\eps$, approximately the exterior of disks of radius $\frac1{kR^k}$. Even if $t_0-t_1\in \H$, where $\H$ is the upper half-plane, it may not be possible to pass a horizontal strip in the case of sectors on a disk (Figure~\ref{strip_horizontal_double}(b)). Of course, since $t_0-t_1=O(\pl \eps\pl)^{-k})$ (see Lemma~\ref{lemma_estimate_tau} below), for any nonzero argument of $t_0-t_1$, it suffices to reduce $\eps$ to be able to pass a horizontal strip. But we want a uniform description on a polydisk $\{\pl\eps\pl=\rho\}$. The solution will be to slant the strips. We come back to this in Section~\ref{sec:enlarging}.\end{remark}

\subsection{Sectors at $\eps=0$}

We pause for a second to remark that when $\eps$ = 0, the separatrices partition the Riemann sphere into $2k$ isomorphic sectors, rather like the portions of an orange, with vertices at $x=0$. The point is that the sectors, with our normalisation of the equations, can serve as Stokes sectors. The real flow lines within each sector flow in tangentially to the separatrices at the origin  (taking a blow-up $z=re^{i\theta}$). Meanwhile, in $t$-space, the strips  become half-spaces, as $\eps$ moves to zero.

This is, in some sense, the purpose of the construction: we are getting, for general $\eps$, sets which generalise the Stokes sectors. Unfortunately, we cannot do it uniformly in $\eps$; each DS domain has its own extension.

\subsection{Enlarging the sectors and the DS domains}\label{sec:enlarging}

We have thus, following Douady-Sentenac, described the generic locus $\Sigma_1$  of systems with no homoclinic loops through $\infty$  as the  union of a certain number of DS domains $\widetilde S_s$, each isomorphic to $\H^k$, and a boundary consisting of (the closure of) the set for which there is a homoclinic orbit joining two separatrices. Our approach for understanding the deformation of an equation will be to first study the deformations over the domains $\widetilde S_s$, and then to make identifications along their mutual boundary. As things are now, one would then need to have information continuous up to the boundary, and then know how to interpret it. It will be simpler for us to extend the DS domains $\widetilde S_s$ somewhat to  wider domains $ S_s$, on the complement  $\Sigma_0$ of the discriminant locus, so that they overlap, and we can simply glue over open sets. As usual, we are only concerned with $\eps$ small. 

Likewise, for a fixed $\eps$ we will want to expand our sectors $\widetilde{\Omega}_{i,\eps,S_s}^\pm$, so that together they constitute an open covering of the complement of infinity in the $x$-plane. This is indeed quite simple: one simply widens the strips by a fixed amount $\rho$ in $t$-space, everywhere except for a $\eta$-neighbourhood of the points $t_0, t_1$ on the boundaries of the strips, where one instead widens the angle of the intersection of the $\eta$-neighbourhood with the strip, from $\pi$ to $\pi+2\theta$, increasing the angle by $0<\theta<\pi/2$ on each side; one can choose, for neatness' sake, that $\rho = \sin(\theta)\eta$. Let us denote our widened sectors by ${\Omega}_{i,\eps,S_s}^\pm$, removing the tilde.

A final issue is that we have partitioned the complex plane; we would also like to ensure that this partition restricts well to a disk, which was our original situation.

Let us now consider the DS domains. 

Before extending the domains, let us highlight what we need. 
\begin{itemize}
\item The first thing that we need is a partition (now a covering) of the $x$-space by $2k$ sectors, with on the  intersection of neighbouring sectors, separatrices extending out in some reasonable way from infinity and lying in the intersection of our extended sectors. 
\item The characteristic feature of the DS domains  is the DS invariant, which joins a sector $ {\Omega}_{i,\eps,S_s}^+$ to the sector ${\Omega}_{\sigma(i),\eps,S_s}^-$, joining our sectors pairwise; this must of course be preserved in our extension; likewise the dual invariant, the DS tree joining the singular points. Note that both these invariants imply a partition of the singular points into $\alpha$- and $\omega$-type. These invariants should extend to the $S_s$.
\item The third thing that is required is some control of the aspect of the approach of our ``real'' lines as they go into the singular points in $x$-space, or alternately the asymptotic direction one chooses to  go out to infinity 
 in $t$-space. One of two key reasons for this has already been given in  Remark~\ref{attrac_repell}:  A simple singular point $x_\ell$ of $\dot x= p_\eps(x)$ such that $p_\eps'(x_\ell)\notin \R$ can be of $\alpha$-type or of $\omega$-type depending on how we approach it. On some of the boundary of $\widetilde{S}_s$, $x_\ell$ is a center since ${\rm Re}(p_\eps'(x_\ell))=0$. But we could force it to keep the same $\alpha$-type or $\omega$-type as in $S_s$ by approaching it along appropriate spirals inside a sector $\widetilde{\Omega}_{i,\eps,S_s}^\pm$. Again, one wants our singular  points to retain their type over the extended domains. Another reason, is that we will also want to define a flag of solutions, with decay rates, of our vector equation $\dot y = A(x,\eps) \cdot y$. This will again depend on the asymptotic direction in $t$-space. What we will want here is that the family of directions chosen not deviate too much from the real one. 
Hence the $\alpha$-type or $\omega$-type is not an absolute property of the point, but a relative property depending on a sector  $\widetilde{\Omega}_{i,\eps,S_s}^\pm$ adherent to $x_\ell$. 
The same can be done with the other boundary parts of $\widetilde{S}_s$ corresponding to  a homoclinic loop surrounding several singular points. 
 \item One final requirement, of a more technical nature, appears in the deformation of our strips as one approaches the boundary of the $\widetilde{S}_s$.  Since we are in the end dealing with a local problem, on a disk $\D$ in $x$-space, we would like our sectors to partition the disk $\D$ in a clean way. As we are considering deformations around $\eps = 0$, we note that a neighbourhood of infinity in $x$-space translates into (half-) disks $D_0$, $D_1$ around the points $t_0$, $t_1$ in our horizontal strips in $t$-space. We would like, to separate our two sectors defined by the  strip, that at least one of the real lines from $x_\alpha$ to $x_\omega$ not intersect the half discs. On the other hand one need not stick to real lines, and in fact what we really want is that the lines defining the DS invariant and the dual DS-tree stay inside our disk $\D$.
 \item On the other hand, staying in a finite region of the $t$-plane, i.e away from the singular points in $x$-space, one has a holomorphic differential equation; this is a flat connection, and parallel transport along any two curves that are homotopic on the complement of the singularities gives the same result. In other words, one can deform quite freely as long as one stays away from the singularities, and does not wander out towards infinity.
\end{itemize}

Now that we know what we want, let us see what we get when we  go to the boundary of $\widetilde S_s$.
The parametrization of the $\widetilde S_s$ places, quite conveniently, the discriminantal locus out at infinity in $\H^k$, with the width of at least one strip in $t$-space tending to $\infty$.  In  particular, for $\eps\to 0$, each sector for $\eps=0$ corresponds to the image of a horizontal half-plane in $t$-space. 
We are, as we said,  interested in deformations of an irregular singular point, i.e. in $\eps$ small. From the scaling action considered above in \eqref{parameter_conic}, or simply by considering the relation defining $x$, this means  that we can restrict our attention to DS parameters $\tau_1, \dots,\tau_k$ with $|\tau_i|>C\pl\eps\pl^{-k}$. This fact is of some use in helping  our constructions  go through.

On the other hand, in real codimension one, as one is moving to and through a homoclinic orbit, it is easy to see that   the width ${\rm Im}(\tau)$, where $\tau = t_0-t_1$, of a strip is tending to zero,  so that in the limit one has a real line in the $t$-plane passing both through $t_0$ and $t_1$; the homoclinic orbit is then the image in $x$-space of the real segment joining $t_0$ and $t_1$. Moving through changes the order of the strips   and bifurcates some of the limit points at $t=\pm\infty$ to other singular points in the $x$-plane.  When the homoclinic orbit surrounds a unique singular point, then  the singular point bifurcates from attracting to  repelling, or vice versa.

The domains are defined by where the separatrices, the flow lines ending or starting at $x=\infty$ end up in the $x$-plane. The idea for extending  the DS domain  is simple; basically, away from the singular points, we are dealing with one large complex (and multi-valued) orbit.  There is nothing sacrosanct about the lines ${\rm Im}(t )= $ constant; one could equally well, for example, consider lines ${\rm Im}(e^{i\beta}t )= $ constant, or even piecewise linear curves of that type; these are, after all, restrictions of the complex flow lines of the same complex equation. We will see that the angle $\beta$ is important, but that there is some flexibility, as long as the angle $\beta$ is kept small. Thus, for example  fixing $t_0=0$, if one is moving the DS parameter $t_1$ up to the real positive axis, one can also move the family of lines, for example to ${\rm Im}(e^{i\beta(t_1)}t )$ = constant, for a suitable $\beta(t_1) $ so that the strip moves with $t_1$ and preserves the limit points of the separatrices, as $ {\rm Im}(t_0-t_1 )$ becomes zero, and even beyond (see Figure~\ref{strip_hor_slant_double}).  
\begin{figure} 
\begin{center}
\subfigure[Sectors on $\C$]{\includegraphics[width=5.5cm]{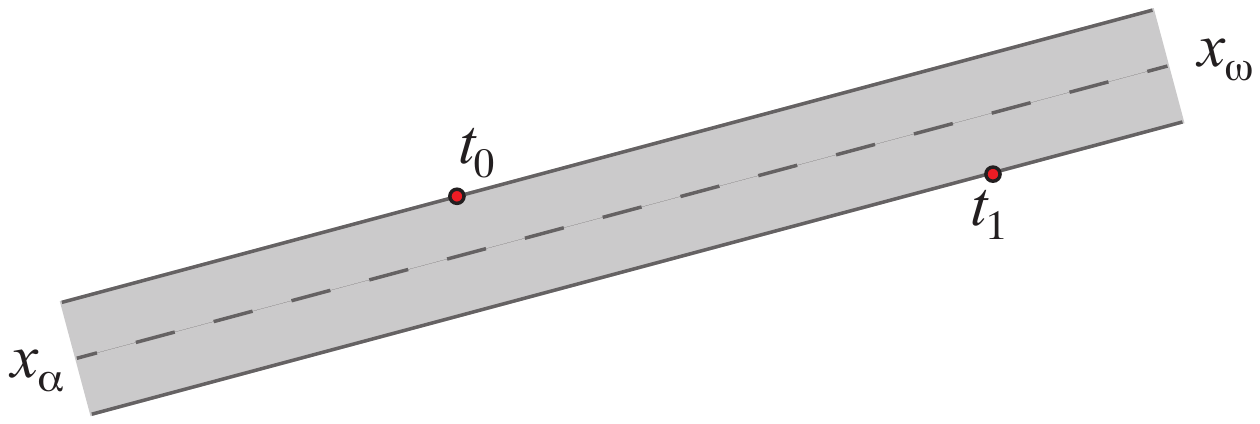}}\qquad\subfigure[Sectors on a disk]{\includegraphics[width=5.5cm]{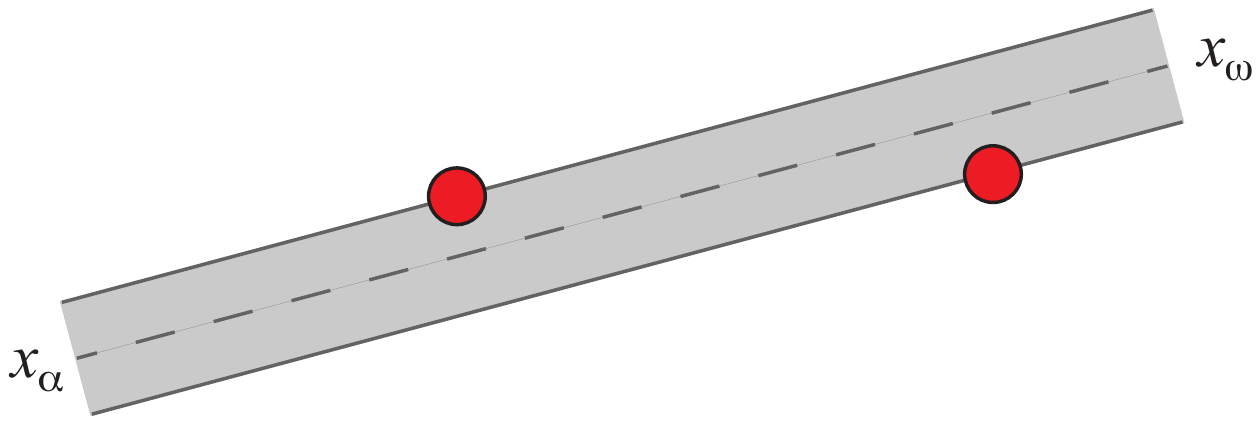}}
\caption{ A slanted strip in $t$-space whose image is the union of sectors $\Omega_{j,\eps,S_s}^+$ and $\Omega_{\sigma(j),\eps,S_s}^-$ in $x$-space. } \label{strip_hor_slant_double} \end{center} \end{figure}

But Figure~\ref{strip_hor_slant_double} is a special case. In general, there is no reason why $x_\alpha$ and $x_\omega$ would turn of the same amount and in the same direction. Hence, once we have passed $t_0$ and $t_1$, we may have to bend the strip again so that it makes an angle $\beta_\alpha$ on the left (resp. $\beta_\omega$ on the right) with the horizontal direction. One would also have to deform in between as several strips come together;  %Here are the constraints in doing so. Let  $\delta\in (0,\frac{\pi}2)$. We ask that the angle between $e^{i\beta_\alpha}$ and $-\frac 1{p_\eps(x_\alpha)}$ (resp. $e^{i\beta_\omega}$ and $-\frac 1{p_\eps(x_\omega)}$) must be in $(-\frac{\pi}2+\frac{\delta}2,\frac{\pi}2-\frac{\delta}2)$ (see Figure~\ref{slant}). 
%This is possible if in turn we restrict the enlargement $S_s$ of the DS domain $\widetilde{S}_s$ so that any point $x_\alpha$ of $\alpha$-type (resp. $x_\omega$ of $\omega$-type) is such that $\arg(p_\eps'(x_\alpha))\in (-\frac{\pi}2-\frac{\delta}4, \frac{\pi}2+\frac{\delta}4)$ (resp. $\arg(p_\eps'(x_\omega))\in (\frac{\pi}2-\frac{\delta}4, \frac{3\pi}2+\frac{\delta}4)$). We can achieve this with $|\beta_\alpha|, |\beta_\omega|<\delta$. The number $\delta$ will be chosen later depending on the eigenvalues $\lambda_j$ of $A(0)$.  
a precise description of the strips in $t$-space seems a bit complicated, but we will see that this will not be needed.
What is important is that all half-strips having the same $x_\alpha$ or $x_\omega$ limit point have the same slope, thus yielding in $x$-space spirals with same rate of spiraling. Also they must cover a full neighbourhood of the limit point. Let us give the constraint when the limit point is $x_\alpha$. Let $B= \sum_{j\in J} \tau_j$, where $J$ is the set of indices of $\tau_j$ corresponding to strips with limit point $x_\alpha$. Then $B= \pm \frac{2\pi i}{p_\eps'(x_\alpha)}$. 
Hence the sum of the width (in the direction of $B$) of the half strips having limit $x_\alpha$ must be more than $B$  so as to cover a full neighbourhood of $x_\alpha$.\begin{figure} 
\begin{center}
\includegraphics[width=9cm]{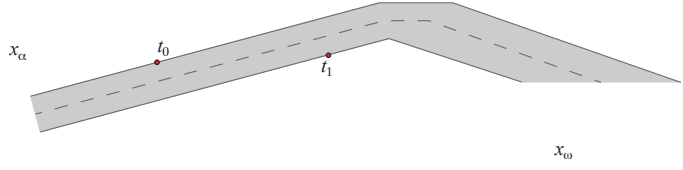}\caption{ A slanted strip in $t$-space whose image is the union of sectors $\Omega_{j,\eps,S_s}^+$ and $\Omega_{\sigma(j),\eps,S_s}^-$ in $x$-space. The slope on the right is chosen so as to approach $x_\omega$. } \label{slant} \end{center} \end{figure}

Let us consider how our space decomposes both in the $t$ and $x$-parametrisations, before deformation of $\widetilde{S}_s$ into $S_s$ (see Figure~\ref{x_t_space}):
\begin{figure}
\begin{center}
\subfigure[$x$-space]{\includegraphics[width=5.5cm]{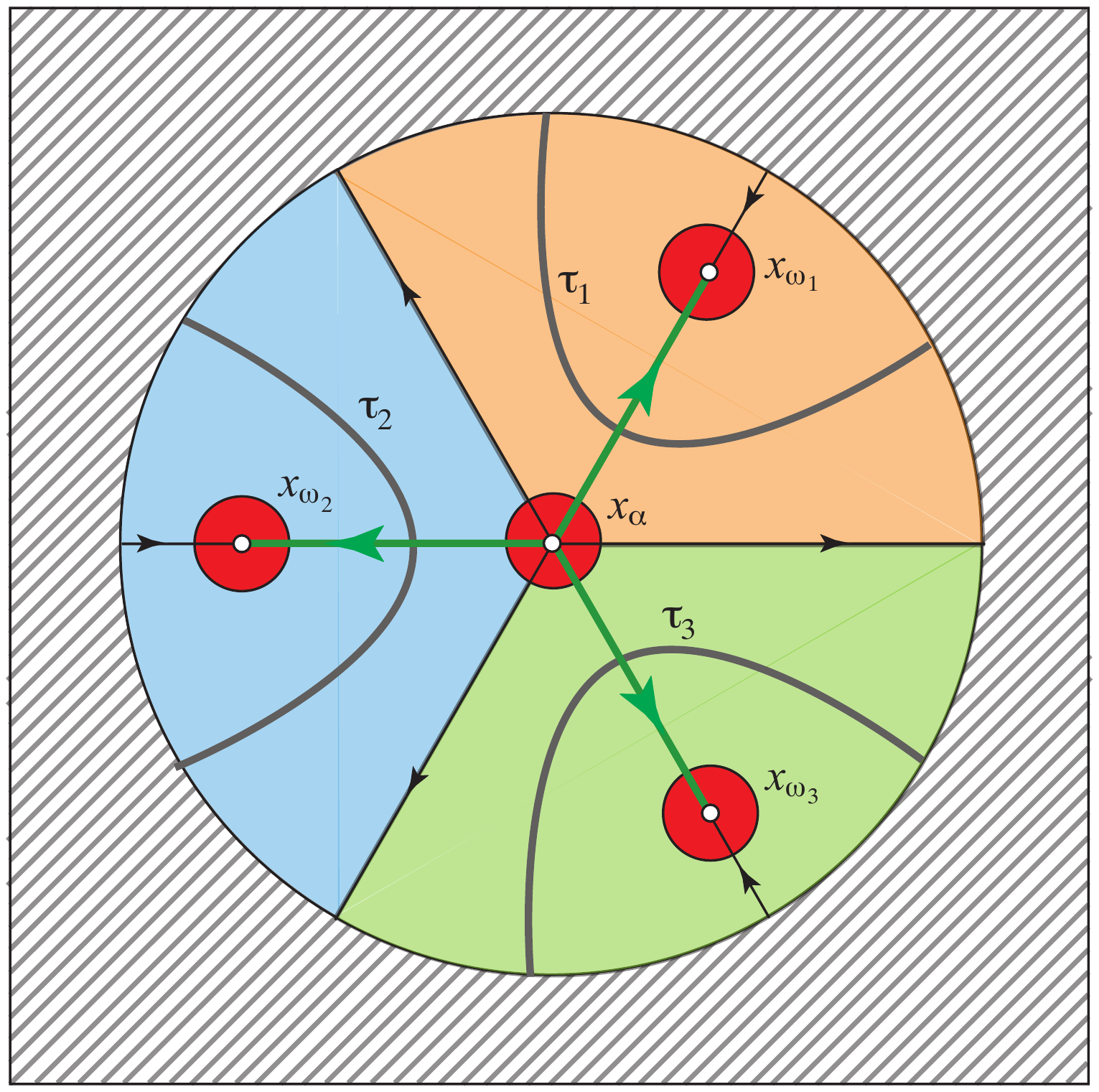}}\qquad\subfigure[$t$-space]{\includegraphics[width=6.2cm]{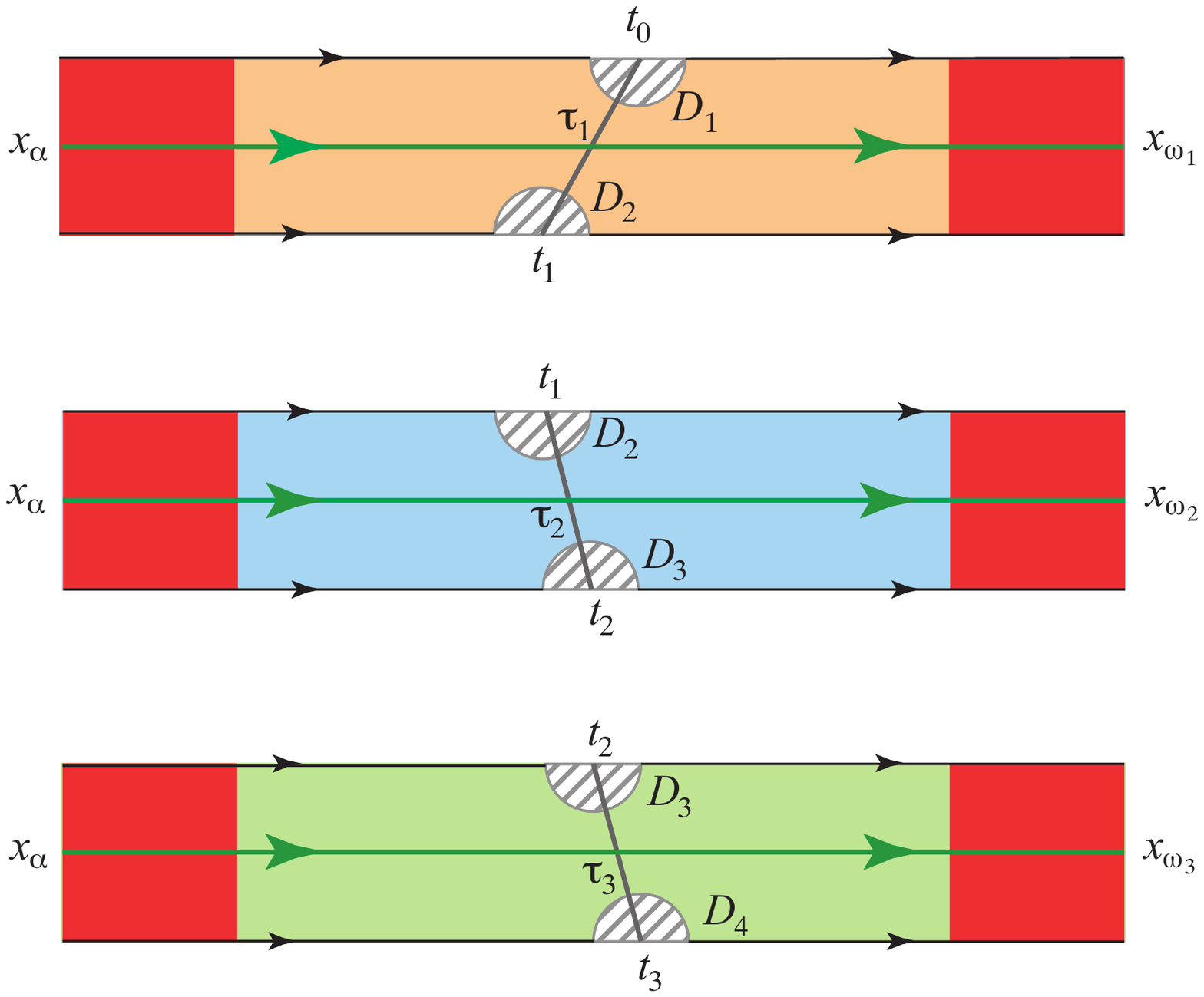}}\caption{The different regions in $x$-space and $t$-space. The DS invariant is in grey and the DS tree in green.}\label{x_t_space}\end{center}\end{figure}

\begin{itemize} 
\item One has points $t_i$ in $t$-space, mapping to $\infty$ in $x$-space. These are surrounded by (diffeormorphs of) disks $D_i$, with $D_i$ minus a slit  mapping to  the intersection of a pair of consecutive sectors ${\Omega}_{i,\eps,S_s}^\pm$ with the set $|x|>R$. One can cover the region $|x|>R$ by a finite number of the $D_i, i=1,\dots, m$, with the lines joining $t_i$ and $t_{\sigma(i)}$ generating the DS invariant in $x$-space. Let all the $D_i$ lie inside a vertical strip $|Re(t)|<C$. 
\item  Taking our singular points $x_\alpha$, $x_\omega$ in $x$-space lying inside the disk $|x|<R/2$, we have  consecutive horizontal half-strips in the region $|Re(t)|>2C $ covering disjoint disks $B_\alpha$, $B_\omega$ in $x$-space around the $x_\alpha$, $x_\omega$, with $Re(t)<-2C$ for an $\alpha$-point and $Re(t)>2C$ for an $\omega$-point. The spiralling in or out of the strips on these disks is important, as indeed is the particular aspect (slope in $t$-space) at which  it is done: indeed, the in or out tells us whether the singular point is attractive or repelling, an important feature of the DS invariant, and the aspect of the lines influences the various growth rates of our solutions to the differential equation. \item Between the two, we have families of horizontal lines in $t$-space joining in $x$-space the $B_\alpha$ to $B_\omega$, with many of them leaving the disk $|x|>R$. The whole family is not important, and indeed all that is necessary is the isotopy class of one of the lines, say $\ell_{\alpha\omega}$ in $x$-space for each horizontal strip, as this is what defines the DS-tree. One has in addition 
 the segments defining the DS-invariant  joining the circle $|x|=R$ to itself, and avoiding the $B_j$. Again it is only the isotopy that is important.
 \end{itemize}
 
 This is the picture on a $DS$ domain   $\widetilde S_s$. We want to extend this picture to a larger set, obtained (as we are on the generic locus of distinct zeroes of $p_\eps$) by moving the individual zeroes of $p_\eps$ around in small balls. As well, we would like to ensure that even on $\widetilde S_s$, we can ensure that the segments defining the DS invariant and the dual DS-tree stay inside. It is clear that this is possible: 
 \begin{itemize} 
\item One leaves the region $|x|>R$ as is.
\item With an isotopy one pulls in the segments of the DS-invariant and the segments of the $DS$ tree so that they lie in the ball $|x|<R/2$. Moving the zeroes of $p_\eps$ around, one can isotope the DS-invariant and the DS tree to follow.
\item One moves the balls $B_\ell$ around with their zeroes, and modifies the slopes of the strips in $t$-space that map into them so as to satisfy our requirement of the points staying attractive or repulsive.
\end{itemize}

This procedure highlights the need to define the new DS domains $S_s$ as  covering open sets in the \'etale sense; that is, as spaces above the complement $\Sigma_0$ of $\Delta = 0$ ($\Sigma_0$ has real codimension $2$), which in fact contain the extra information of the attachment of the singular points to infinity and which retract to   $\widetilde S_s$. This is important when one branches around the locus $\Delta =0$: indeed, while the locus of the zeroes of $p_\eps$ is the same, the pattern of attachment of the separatrices to the singularities can change; likewise, the segments of the DS-invariant or the dual DS-tree will follow the zeroes as they are isotoped. This was brought to the fore in the treatment of the $k=1$ case in \cite{cLR2}; one  sees there  that the projection of $S_s$ to the neighbourhood of $\Delta =0$  self-intersects, but with different attachment to infinity (see Figures~\ref{tubular} and \ref{auto_intersection} later in the paper).  This persists here: for generic points of $\Delta =0$  where exactly two points of $p_\eps = 0$ coincide, a single $S_s$ can be extended in order to cover the neighbourhood of that point in a ramified way. In addition, of course, neighbourhoods of points of $\Delta=0$ inside $\Sigma_0$ can be covered by several $S_s$.

\subsection{Some conventions}

We will be considering our linear o.d.e. problem from a slightly more abstract point of view, as lying on bundles equipped with singular meromorphic connections. As this involves changes of basis and so on, we pause to fix some conventions, some of which do not seem to be entirely uniform in the literature.

{\it A fundamental matrix solution to the o.d.e., or of covariant constant sections.} This will be a matrix $X$, everywhere non degenerate, with 
$$( d-A)X = 0.$$
Then the coordinates of a solution vector, with respect to a basis of solutions is a constant column vector $v$, with 
$$X\cdot v, $$  representing the solution vector in our trivialisation.
Likewise, two fundamental matrix solutions are related by a constant invertible matrix $C$
$$X_1 =X_2\cdot C$$

{\it Changing gauge.} The fundamental matrix solution gets multiplied by $g$ on the left, so that $\hat X = gX$; simultaneously, the trivialisation of the bundle gets acted on by $g^{-1}$ from the right. The matrix is still thought of as  giving the same set of solutions, but given in a different gauge: we have 
$$d\hat X - (dg\cdot g^{-1} + gAg^{-1}) \hat X= 0$$
In particular, taking $g$ = $X^{-1}$ gauges the connection to zero.   

{\it The Stokes matrices.} Recall that the real flow lines at $\eps = 0$ partitioned the Riemann sphere into $2k$ sectors, all meeting at $0$ and infinity. Around the origin or the boundary of a disk the unfolded sectors are ordered counterclockwise, for arbitrary DS domains and $\eps$, as   $ {\Omega}_{1,\eps,S_s}^+$,  $ {\Omega}_{1,\eps,S_s}^-$,  $ {\Omega}_{2,\eps,S_s}^+$,  $ {\Omega}_{2,\eps,S_s}^-$, \dots,  $ {\Omega}_{k,\eps,S_s}^-$. Note that this is the clockwise order if one turns around infinity.

 The theory of irregular o.d.e. gives us on these sectors (limited to a disk), fundamental matrix solutions $X_i^\pm$ on $\Omega_i^\pm$ asymptotic to a diagonal formal solution; on intersections, they are related by Stokes matrices:
$$X_j^+ =X_{j-1}^- \cdot C_{j}^U\quad X_j^- =X_{j}^+ \cdot C_{j}^L,$$
with a cyclicity convention in the indices that $0 = k$. Dually, they can be seen as changes of coordinates in flat trivializations:
$$ v_{j-1}^- = C_{j}^U v_j^+,\quad v_{j }^+ = C_{j}^L v_j^-$$
 (the v's are all the same vector, but in different coordinates).
 
 We will, on each DS domain, be extending these Stokes matrices to {\it generalized Stokes matrices} $C_{j,\eps,S_s}^U$, $C_{j,\eps,S_s}^L$, along with some extra diagonal matrices $C_{j,\sigma(j), \eps}^G$. 
 
{\it Normalization of the Stokes matrices.} Geometrically, the Stokes matrices  are defined up to the action of a certain number of diagonal matrices. Traditionally in the literature, the Stokes matrices $C_{j,\eps,S_s}^U$, $C_{j,\eps,S_s}^L$ are normalized so that their diagonal coefficients are $1$. We rather choose a normalization so that the product of the  inverses of the  Stokes matrices in the appropriate order yields the monodromy matrix along the loop going around all the $k+1$-singular points. One way to achieve this is to take all Stokes matrices $C_{j,\eps,S_s}^U$, $C_{j,\eps,S_s}^L$  with diagonal coefficients equal to $1$, except $C_{1,\eps,S_s}^U$, which incorporates the formal monodromy $\exp\left(-2\pi i \Lambda_k(\eps)\right)$, with $-\Lambda_k(\eps)$ the residue matrix at infinity in \eqref{Normal_form_0}. 

{\it Monodromy.}  We follow the conventions of \cite{yIsY}. The monodromy is defined by taking a fundamental matrix solution $X$, continuing it around a loop $\gamma$, and relating the result    $X_e $  at the end to the beginning $X =X_b$ by $X_e = X_b\cdot M $  , with $M$ a constant matrix. On coordinates $v_b$ (resp. $v_e$) of a solution in a basis given by the columns of $X_b$ (resp. $X_e$), we have $v_b= M \cdot v_e$. This  convention gives a representation of the fundamental group in the sense that 
 $$ M_{\gamma_1*\gamma_2}= M_{\gamma_2}\cdot M_{\gamma_1},$$
where the composition $\gamma_1*\gamma_2$ of two loops is obtained by going through $\gamma_1$ first, then $\gamma_2$. 
%In particular, at our irregular singular point our Stokes bases are  different from sector to sector, but following them through anti-clockwise
%\begin{align*}X_k^- &= X_1^+\cdot (C_{1}^U)^{-1}= X_1^-\cdot (C_{1}^L)^{-1}\cdot (C_{1}^U)^{-1} =...\\
%&= X_k^- \cdot (C_{k}^L)^{-1}\cdot (C_{k}^U)^{-1}\cdot (C_{k-1}^L)^{-1}\cdot (C_{k-1}^U)^{-1}\cdot...  \cdot (C_{1}^L)^{-1}\cdot (C_{1}^U)^{-1}, \end{align*}
Considering the monodromy around the origin, starting from $X_k^-$ on $\Omega_k^-$, the extension of $X_k^-$ on $\Omega_1^+$ is $X_1^+(C_1^U)^{-1}$, the extension of $ X_1^+$ on $\Omega_1^-$ is $X_1^-(C_1^L)^{-1}$\dots,
giving the monodromy around the irregular singularity
 $$M = (C_{k}^L)^{-1}\cdot (C_{k}^U)^{-1}\cdot (C_{k-1}^L)^{-1}\cdot (C_{k-1}^U)^{-1}\cdot...  \cdot (C_{1}^L)^{-1}\cdot (C_{1}^U)^{-1}.$$
This coincides with the conventions of \cite{yIaK}. More generally, if we have, moving {\it from} sector $A$ {\it to} sector $B$, matrices $C_{AB}$ relating flat bases $X_B = X_A C_{AB}$, then if along a loop, if we go through sectors $A_1, A_2, ..., A_k, A_1$ in turn, the monodromy will be  
$$M= C_{A_1,A_k} \cdot C_{A_{k}A_{k-1}}\cdot ...\cdot C_{A_2A_1}.$$

\subsection{Flags of solutions and generalized Stokes matrices}
 
Let us now turn our attention to the vector equation (\ref{vector}):$$\dot y  = A(\eps, x(t))\cdot y.$$
We will in particular be interested in the behaviour of solutions as $x(t)$ goes to the limit points $x_\alpha, x_\omega$, i.e. as $t$ goes to $\pm \infty$ along our real lines. The dominant term of the equation is then $A(\eps, x_\alpha)$ or $A(\eps, x_\omega)$, which we denote $A(\eps,x_*)$. We then have the approximate equation: 
$$\dot y  = A(\eps, x_*)\cdot y. $$
As we are deforming from a generic irregular singularity, we can suppose
$$A(\eps, x_*) = {\rm diag} (\lambda_1(\eps), \dots, \lambda_n(\eps)),$$ and that \begin{equation}{\rm Re} \, \lambda_1(\eps)> {\rm Re} \, \lambda_2(\eps)>\dots> {\rm Re} \, \lambda_n(\eps).\label{order_lambda}\end{equation}
 
 The approximate equation has solutions along each real line $y^i(t) = (0,..,0, \exp(\lambda_i t),0,...0)$. The solution $y^{i+1}$ grows  exponentially more slowly than $y^i$ as $t\rightarrow +\infty$, and grows exponentially faster as $t\rightarrow -\infty$. We thus have two flags of subspaces  in the solution space, defined by growth rates: the first, given by behaviour as $t\rightarrow +\infty$, defined by $W_i(x_\omega) = <y^{n-i+1},..,y^n>$, and the second, given by behaviour as  $t\rightarrow  -\infty$, defined by $W_i(x_\alpha) = <y^{1 },..,y^i>$, with 
 $$ W_1(x_\omega)\subset W_2(x_\omega)\subset...\subset W_n(x_\omega),$$
 $$W_n(x_\alpha)\supset W_{n-1}(x_\alpha)\supset...\supset W_1(x_\alpha).$$
  These two flags are transversal for sufficiently small $\eps$ (see Lemma~\ref{transverse} below):
  $$W_i(x_\omega)\cap W_{n-i}(x_\alpha) = 0.$$
  
  At a generic singular point, for generic $\epsilon$, the actual behaviour of solutions is given precisely by the formulae above: the system admits a gauge transition to a normal form. Such a gauge transformation is not, however necessary, and indeed these flags are remarkably robust as one varies in $\eps$. One can see that  this is reasonable in that the flag manifold is compact, and so there  is at least a convergent subsequence as one moves around in $\eps$. In any case, the existence of the flag, and its continuity as one varies $\eps$ (even moving out of $\Sigma_0$) is given by the following theorem of Levinson \cite{Lev} (see Coddington and Levinson \cite{CL}, Theorem 8.1, p. 92; see also the discussion after Theorem 5.3 of \cite{HLR}).

\begin{theorem}\label{thm:CD} Let a system of linear differential equations of the form 
 \begin{equation}
\dot{y}=\left(\widetilde{\Lambda}_0(\eps) + \widetilde{\Lambda}(\eps,t)  + P(\eps,t) \right)\cdot y,\\
\end{equation}
be given on the real line, for which $\widetilde{\Lambda}_0$ is diagonal, with distinct real parts of the eigenvalues, $\widetilde{\Lambda}(\eps,t) $ is also diagonal, with limit zero at $t= \infty$ 
 and 
 \begin{equation}\label{intbounds} \int_0^\infty \left| \frac{d}{dt}(\widetilde{\Lambda}(\eps,t))\right| dt <\infty, \quad \int_0^\infty \left| P(\eps,t)\right| dt <\infty.\end{equation}
Then, setting $\lambda_\ell(\eps,t)$, $\ell=1, \dots,k$, to be the successive eigenvalues of  $\widetilde{\Lambda}_0(\eps) + \widetilde{\Lambda}(\eps,t) $, there exist $t_0\in (0,\infty)$ and  solutions $\phi_{\ell,\eps}(t)$ of the system for $t\in (t_0,\infty)$ with 
$$\lim_{t\rightarrow \infty} \phi_{\ell,\eps}(t)\cdot \exp \left(- \int_{t_0}^t \lambda_\ell(\tau) d\tau\right) = v_\ell(\eps),$$
for $v_\ell(\eps)$ a non-zero eigenvector of $\widetilde{\Lambda}_0(\eps)$ corresponding to $\lambda_\ell(\infty)$. If $\widetilde{\Lambda}_0(\eps)$, $\widetilde{\Lambda}(\eps,t)$ and $P(\eps,t)$ depend continuously (resp. analytically) on $\eps$ over compact sets in $t$-space,  with the integrals in \eqref{intbounds} uniformly bounded, then the solutions can be chosen depending continuously (resp. analytically) on $\eps$.
\end{theorem}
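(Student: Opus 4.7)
The plan is to reduce the problem, for each index $\ell$ separately, to a Picard-type fixed-point equation on $[t_0,\infty)$ with the help of the exponential dichotomy coming from the spectral gap of $\widetilde{\Lambda}_0$. First I would factor out the dominant exponential: writing $\lambda_\ell(\eps,t)$ for the $\ell$-th diagonal entry of $\widetilde{\Lambda}_0(\eps)+\widetilde{\Lambda}(\eps,t)$ and setting $y(t) = \exp\bigl(\int_{t_0}^t \lambda_\ell(\eps,\tau)\,d\tau\bigr)\,z(t)$, the system becomes
\begin{equation*}
\dot z = \bigl(D_\ell(\eps,t) + P(\eps,t)\bigr)\, z,
\end{equation*}
where $D_\ell$ is the diagonal matrix with entries $\lambda_k(\eps,t) - \lambda_\ell(\eps,t)$. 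By hypothesis, $\mathrm{Re}(\lambda_k-\lambda_\ell)$ has constant nonzero sign for $k\neq\ell$ and for $t$ large (uniformly in $\eps$ on compacta), with a uniform spectral gap $2\delta > 0$. The goal is to produce a solution $z(t) \to e_\ell$ as $t \to \infty$.

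Next, I would set up the standard Levinson integral equation exploiting this dichotomy. Split the indices $\{1,\dots,n\}$ into $I_+(\ell) = \{k : \mathrm{Re}\,\lambda_k > \mathrm{Re}\,\lambda_\ell\}$ and $I_-(\ell) = \{k : \mathrm{Re}\,\lambda_k \le \mathrm{Re}\,\lambda_\ell\}$, and write the corresponding block projectors $\Pi_\pm$. Treating the $P$-term as an inhomogeneity and using variation of parameters with the diagonal fundamental solution of $\dot w = D_\ell w$, the problem becomes the fixed-point equation
\begin{equation*}
z(t) = e_\ell + \int_{t_0}^t G_-(t,s)\, P(\eps,s)\, z(s)\,ds \; - \; \int_t^{\infty} G_+(t,s)\, P(\eps,s)\, z(s)\,ds,
\end{equation*}
where $G_\pm(t,s) = \Pi_\pm \exp\bigl(\int_s^t D_\ell(\eps,\tau)\,d\tau\bigr)$ decay like $e^{-\delta|t-s|}$ thanks to the gap. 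On $C_b([t_0,\infty),\C^n)$ with the sup norm, the right-hand side is a contraction provided $t_0$ is chosen so large that $\int_{t_0}^\infty |P(\eps,s)|\,ds$ and $\sup_{t\ge t_0}|\widetilde{\Lambda}(\eps,t)|$ are sufficiently small. The unique fixed point then yields the desired $\phi_{\ell,\eps}(t) = \exp\bigl(\int_{t_0}^t \lambda_\ell(\eps,\tau)\,d\tau\bigr)\,z(t)$, and the normalization $z(t)\to e_\ell$ reads off the eigenvector $v_\ell(\eps)$ of $\widetilde{\Lambda}_0(\eps)$ in the limit, using that the spectrum is simple.

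For the parameter dependence, the estimates above depend only on the uniform $L^1$-bounds in \eqref{intbounds} and on the uniform spectral gap of $\widetilde{\Lambda}_0(\eps)$ on a compact $K$ in $\eps$-space. Hence $t_0$ and the contraction constant can be chosen uniformly in $\eps\in K$. Since each Picard iterate of the integral operator is continuous (resp. analytic) in $\eps$, and the iterates converge uniformly to the fixed point, the fixed point $z(t;\eps)$ and therefore $\phi_{\ell,\eps}$ inherit the continuity (resp. analyticity) of the data. The main obstacle I anticipate is the uniform-in-$\eps$ choice of $t_0$ together with the verification that $G_\pm$ really gives a bounded operator on $C_b$: one must handle the fact that $\lambda_k(\eps,\tau)-\lambda_\ell(\eps,\tau)$ is only asymptotically close to its value at $\infty$, so the exponential decay of $G_\pm$ must be extracted from the unperturbed gap after absorbing the integrable correction $\int_s^t (\widetilde{\Lambda}(\eps,\tau)-\widetilde{\Lambda}(\eps,\infty))\,d\tau$ into a bounded conjugation. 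Once this is done, the contraction and the analytic/continuous dependence follow routinely.
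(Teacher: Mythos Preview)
The paper does not prove this theorem at all: it is quoted as a classical result of Levinson, with a pointer to Theorem~8.1 in Coddington--Levinson and to the discussion in \cite{HLR}. Your sketch is precisely the standard Coddington--Levinson argument (variation of constants against the diagonal fundamental solution, splitting the kernel according to the sign of $\mathrm{Re}(\lambda_k-\lambda_\ell)$, contraction on $C_b([t_0,\infty))$ once $\int_{t_0}^\infty |P|$ is small, then uniform-in-$\eps$ iteration for the parameter dependence), so there is nothing to compare methodologically. One small imprecision worth fixing: for the block $I_-(\ell)$ the kernel $G_-(t,s)$ is only \emph{bounded}, not exponentially decaying, in the $\ell$-th coordinate; consequently the limit of $z(t)$ is $(1+c)e_\ell$ with $c=\int_{t_0}^\infty (Pz)_\ell\,ds$ small rather than $e_\ell$ on the nose, but this is still a nonzero eigenvector, which is all the statement claims.
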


In the cases of concern to us, the flags are varying continuously, and indeed analytically within the DS domains. In particular, we have

\begin{lemma} \label{transverse}
For $\eps$ small within a DS domain, the flags $W_i(x_\omega)$, $W_{n-i}(x_\alpha)$ are transverse in each strip. 
\end{lemma}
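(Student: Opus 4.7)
The plan is to build the two flags explicitly using Theorem~\ref{thm:CD}, show that at $\eps=0$ they reduce to the standard opposite coordinate flags of the classical Stokes sector theory, and invoke openness of transversality in the product of flag varieties.

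Parametrising the strip by $t\in(-\infty,+\infty)$, near $t=+\infty$ one has $x(t)\to x_\omega(\eps)$, and a constant holomorphic-in-$\eps$ gauge change diagonalising $A(\eps,x_\omega(\eps))$ (allowed by nonresonance) recasts the vector equation as $\dot y=(\widetilde\Lambda_0(\eps)+P_\omega(\eps,t))\cdot y$, where $\widetilde\Lambda_0(\eps)$ is diagonal with distinct real parts by \eqref{order_lambda} and $P_\omega(\eps,t)=O\bigl(|x(t)-x_\omega(\eps)|\bigr)$ decays exponentially in $t$ at rate $|{\rm Re}\,p_\eps'(x_\omega)|$, bounded below for each fixed $\eps\in\widetilde S_s$ thanks to the slanting of the strip from Section~\ref{sec:enlarging}. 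Theorem~\ref{thm:CD} then produces analytic solutions $\phi^j_\omega(\eps,t)$ with the asymptotic behaviour of the $j$th diagonal mode, from which we set $W_i(x_\omega)(\eps)=\langle\phi^{n-i+1}_\omega,\dots,\phi^n_\omega\rangle$; the same argument as $t\to-\infty$ produces $W_i(x_\alpha)(\eps)=\langle\phi^1_\alpha,\dots,\phi^i_\alpha\rangle$.

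At $\eps=0$ the strip is a classical Stokes sector, on which the Hukuhara–Turrittin–Sibuya theorem provides an analytic gauge equivalence to the diagonal formal normal form \eqref{Normal_form_0}; in that gauge the two flags become the standard opposite coordinate flags $\langle e_{n-i+1},\dots,e_n\rangle$ and $\langle e_1,\dots,e_{n-i}\rangle$, which intersect only in $0$. The unfolded sectorial normalisation of \cite{HLR} guarantees that the asymptotic construction depends continuously on $\eps$ up to $\eps=0$. Since transversality is an open condition on pairs of flags in the product of flag varieties, and both flags depend continuously on $\eps\in\widetilde S_s\cup\{0\}$, transversality at $\eps=0$ persists throughout a neighbourhood of $\eps=0$ in $\widetilde S_s$, which is the claim.

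The main technical obstacle is the continuity of the flags at $\eps=0$: the exponential decay rate $|{\rm Re}\,p_\eps'(x_\omega)|$ tends to zero with $\eps$, so the $L^1$-bounds in \eqref{intbounds} are not uniform and the continuity clause of Theorem~\ref{thm:CD} does not apply directly. One passes instead through the sectorwise analytic gauge transformations of \cite{HLR}, which are built precisely to interpolate between $\eps=0$ and $\eps\ne 0$ on $\widetilde S_s\cup\{0\}$, yielding the required continuity and finishing the argument.
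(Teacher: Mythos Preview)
Your approach is essentially the same as the paper's: check transversality at $\eps=0$ via the classical sectorial basis (where the flags become the standard opposite coordinate flags), then appeal to continuity of the flags in $\eps$ together with openness of transversality. The paper's proof is much terser---it verifies only the $\eps=0$ case and leaves the continuity step implicit, having asserted it in the paragraph preceding the lemma as a consequence of Levinson's theorem.

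You are more careful, and you correctly flag a subtlety the paper passes over: the exponential rate $|{\rm Re}\,p_\eps'(x_\omega)|$ degenerates as $\eps\to 0$, so the uniform $L^1$ bounds required by the continuity clause of Theorem~\ref{thm:CD} are not available directly. Your proposed remedy, routing through the sectorial normalisations of \cite{HLR}, is sound in spirit but deserves one caution: the normalising transformations $H_{j,\eps,s}^\pm$ in \cite{HLR} are themselves built from the basis $e_i=W_i(x_\omega)\cap W_{n-i+1}(x_\alpha)$, which presupposes transversality. To avoid circularity, note that what you actually need is continuity of the \emph{flags} (not of the transverse basis), and each flag is attached to a single endpoint; its existence and continuous dependence on $\eps$ up to $\eps=0$ can be obtained directly from the asymptotic solutions at that endpoint (this is the content of the discussion following Theorem~5.3 in \cite{HLR}, which treats the degeneration of the rate), without invoking the other flag. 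With that adjustment your argument is complete and slightly more honest than the paper's about where the work lies.
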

\begin{proof} The flags are transverse at $\eps=0$. Indeed, the standard results of the theory of irregular singularities (see for instance \cite{yIsY})
give us on each sector a basis of solutions asymptotic to the standard basis of solution of the formal normal form; in these bases, the flags  $W_i(x_\omega)$, $W_{n-i}(x_\alpha)$ are simply the standard flags, and indeed are transverse.\end{proof}

Next, within a sector (i.e., a strip in $t$-space), the flags are constant, i.e. independent of the real flow line chosen. The reason is that one is solving an o.d.e. on the complex plane; one has a flat connection. Thus, to get the asymptotics of the flow lines on $Im(t) = c_1$, one can solve along $Im(t) = c_0$, and then just integrate in the imaginary direction "out near infinity", on  a segment of length $|c_1-c_0|$, where the coefficients of the equation are essentially constant, and the equation preserves the flag.

One thing to which the flags are sensitive, however, is the asymptotic direction of our flow lines in $t$-space. Indeed, if one just looks at the formal normal form, and takes the solutions $y^i(t) = (0,..,0, \exp(\lambda_i t),0,...0)$ for $t=e^{i\theta} s, s\in \R^+$, it is the ordering of  the real parts of $\lambda_i e^{i\theta}$ (assuming they are distinct ) which determines the flag. Unless $\theta$ is suitably small, this is not the same as the ordering of the real parts of $\lambda_i$. In particular, we must impose the constraint that the deformation angles be kept small in our construction of the extensions of the DS domains in the previous section. This is possible as long as we keep the parameters of our construction within a suitable range, which we can. 

\bigskip

\noindent{\it Flags and bases; generalized Stokes matrices}. On each sector, one has two transverse flags  $W_i(x_\omega)$, $W_{j}(x_\alpha)$, which gives one dimensional intersections
$$W_i(x_\omega)\cap W_{n-i+1}(x_\alpha) = <e_i>.$$
and so a basis $_Atr=<e_1,e_2,...,e_n>$ defined up scale, i.e.  to the action of the diagonal matrices. Now let us consider two consecutive sectors, sharing a separatrix; as such, they also share a flag. For instance, 
$\Omega_{j,\eps}^+ $, $\Omega_{j-1,\eps}^-$   share a separatrix   adherent to a unique singular point  $x_\alpha$  of $\alpha$-type; along these we have a shared flag $W_i(x_\alpha)$, and so their bases   $_Atr_{j-1}^-$ on $\Omega_{j-1,\eps}^-$ and  $_Atr_{j}^+$  on  $\Omega_{j,\eps}^+$ are related by an upper triangular matrix $C_{j,\eps,S_s}^U$: 
$$_Atr_{j}^+= {_Atr_{j-1}^-}\cdot C_{j,\eps,S_s}^U.$$
Similarly,   $\Omega_{j,\eps}^+$ and $ \Omega_{j,\eps}^-$ share a separatrix  adherent to a unique singular point $x_\omega$ of $\omega$-type; along these we have shared flag $W_{j}(x_\omega)$ and so their bases are related  by a lower triangular matrix $C_{j,\eps,S_s}^L$:
$$_Atr_j^- = {_Atr_{j}^+} \cdot C_{j,\eps,S_s}^L$$
Finally, consider $\Omega_{j,\eps}^+$ and $\Omega_{\sigma(j),\eps}^-$, where $\sigma$ is the (order two) permutation representing the Douady-Sentenac invariant;  these share  two singular points, one of $\alpha$-type, one of $\omega$-type. The sectors $\Omega_{j,\eps}^+$ and $\Omega_{\sigma(j),\eps}^-$ then  share both flags, and so the matrix $C_{j,\sigma(j),\eps}^G$,  relating the adapted bases, called \emph{gate matrix}, will be  diagonal:
$$_Atr_{\sigma(j)}^- = {_Atr_{j}^+} \cdot C_{j,\sigma(j),\eps}^G$$
These matrices will be our {\it generalised Stokes matrices and gate matrices.} 

\subsection{Normalisations} We remark that there is freedom on each sector of an action by the diagonal matrices, as the intersections of the flags do not specify a scale. The Stokes matrices $C_{AB}$ associated to passing from sector A to sector B then get acted on by $D_AC_{AB}D_B^{-1}$. This is somewhat different from the classical Stokes matrices, which are usually defined as being strictly upper triangular, i.e with ones on the diagonal; this arises from the asymptotics of the equation.  Here we allow an arbitrary diagonal, but then quotient by the action of the diagonal matrices. We note then that our Stokes matrices contain the information of the formal monodromy around the singularity. 

Various normalisations are possible.  One would like one such 
that  the Stokes matrices $C_{j,\eps,S_s}^L$ and $C_{j,\eps,S_s}^U$  have limits when the confluence of singular points occur. We choose a normalisation that accomplishes this by taking all the diagonal parts of $C_{j,\eps,S_s}^L$ and $C_{j,\eps,S_s}^U$ equal to the identity, except for the diagonal part $C_{j,\eps,S_s}^U$ which is taken as $\exp(-2\pi i \Lambda_k(\eps))$; this maintains the compatibility of our Stokes matrices with the formal monodromy.  On the DS domains, one also needs that our Stokes matrices  carry the monodromy of the formal normal form at each regular singular point; the product  of the inverses of the diagonal parts of a sequence of the $C_{j,\eps,S_s}^L$  or $C_{j,\eps,S_s}^U$ and of the $C_{j,\sigma(j),\eps}^G$ corresponding to the sectors touching the singular point should be the formal monodromy. For example,  if  all the diagonal parts of the $C_{j,\eps,S_s}^L$, $C_{j,\eps,S_s}^U$ corresponding to the singular point are the identity, and there is only one gate matrix, then that gate matrix or its inverse must be the formal monodromy. In general, as the singular points are arranged in a tree, one sees that one may inductively move down the tree to compute the gate matrices in terms of the formal normal form.  These normalised gate matrices $C_{j,\sigma(j),\eps}^G$  often have no limit when $\eps\to 0$, though the singularity disappears when we pass from this Stokes matrix picture A  to picture B, as explained in Section~\ref{three-pictures} below.

{\it In what follows, we will be referring to normalised (generalised) Stokes matrices  $C_{j,\eps,S_s}^L$ and $C_{j,\eps,S_s}^U$; we will suppose that the concomitant gate matrices have been computed from these and from the formal normal form.}

\subsection{ Building from the formal normal form; bundles on the disk or on the plane and Stokes matrices} \label{three-pictures}
We can consider our equations in a slightly more abstract form, as bundles equipped with singular (meromorphic) connections over the complex line. This allows us to choose different open sets covering the line, and to trivialize the bundle differently on these open sets. In our construction, we will consider as basic open sets, for $\eps\in S_s$, the sets $ \Omega_{j,\eps,S_s}^\pm$. There are three basic pictures we want to consider: 

{\it A) Trivial connections $d$ on each $ \Omega_{j,\eps,S_s}^\pm$, and constant transition matrices, defining a bundle on a punctured domain (either $\C\setminus\{x_1, \dots x_{k+1 }\}$ or $\D\setminus\{x_1, \dots x_{k+1 }\}$).} 

Here we have on each sector $ \Omega_{j,\eps,S_s}^\pm$ a bundle with  the trivial connection, and transition functions which will be automorphisms of the trivial connection (i.e. constant matrices; these will be the Stokes matrices). The bundle on each sector is equipped with two  constant  flags  $ W_i(x_\alpha)$  associated to the $\alpha$- singularity  adherent to $ \Omega_{j,\eps,S_s}^\pm$, and  $ W_i(x_\omega)$   associated to the $\omega$- singularity  adherent to $ \Omega_{j,\eps,S_s}^+,  \Omega_{j-1,\eps,S_s}^-$.  One has (covariant constant)  trivializations compatible with the flags, in the sense that 
the $i$-th element of the basis lies in the intersection of $W_i(x_\alpha)$ and $W_{n-i+1}(x_\omega)$.

As one moves from sector to sector, the bundles are glued by constant transition functions, which are   our generalized Stokes matrices. The intersections of two sectors, which we call \emph{intersection sectors}, are of three types:
\begin{itemize}
\item Sectors $\Omega_{j,\eps}^U$, the intersection of $\Omega_{j,\eps}^+\cap \Omega_{j-1,\eps}^-$,  containing a separatrix emerging from $x_\alpha$, a point  of $\alpha$-type; along these we have   a   trivialisation $_Atr_{j-1}^-$ on $\Omega_{j-1,\eps}^-$ and trivialisation $_Atr_{j}^+$  on  $\Omega_{j,\eps}^+$, related by the  upper triangular matrix $C_{j,\eps,S_s}^U$: 
$$_Atr_{j}^+= {_Atr_{j-1}^-}\cdot C_{j,\eps,S_s}^U.$$

\item Sectors $\Omega_{j,\eps}^L$, the intersection of $\Omega_{j,\eps}^+\cap \Omega_{j,\eps}^-$, containing a separatrix    converging to $x_\omega$, a point of $\omega$-type;  along these we have trivializations related by  a lower triangular matrix $C_{j,\eps,S_s}^L$:
$$_Atr_j^- = {_Atr_{j}^+} \cdot C_{j,\eps,S_s}^L$$
\item Gate sectors $\Omega_{j,\sigma(j),\eps}^G$, the intersection $\Omega_{j,\eps}^+\cap \Omega_{\sigma(j),\eps}^-$, where $\sigma$ is the permutation representing the Douady-Sentenac invariant;  the gate sector is adherent to two singular points, one of $\alpha$-type, one of $\omega$-type. The sectors $\Omega_{j,\eps}^+$ and $\Omega_{\sigma(j),\eps}^-$ intersecting in the gate sector share both flags, and the trivializations are related by a diagonal $C_{j,\sigma(j),\eps}^G$ :
$$_Atr_{\sigma(j)}^- = {_Atr_{j}^+} \cdot C_{j,\sigma(j),\eps}^G$$
 
\end{itemize}

 {\it B) Diagonal connections $\nabla^D = d - \frac{\Lambda(\eps,x)}{p_\eps(x)}$ on each $ \Omega_{j,\eps,S_s}^\pm$, and transition matrices tending to the identity at the singular points, defining a bundle on the   plane $\C $.} This picture is obtained from picture $A$  by applying on each sector  the gauge transformation given by the diagonal fundamental matrix of solutions $F_{j,\eps}^\pm$ to the {\it formal} diagonal normal form, (\ref{normal_form_eps}). (The fundamental solution given above is multivalued, and so one can choose different determinations on each sector.) The $F_{j,\eps}^\pm$ preserve on each sector the flags $W_i(x_\alpha)$ and $W_i(x_\omega)$, which now however have geometric meaning, as they correspond to the decay rates of solutions along lines $e^{i\theta}t, t\in \R^+$. The transition matrices gauge transform to automorphisms of $d - \frac{\Lambda(\eps,x)}{p_\eps(x)}$. Along our intersection sectors:
 \begin{itemize}
\item In $\Omega_{j,\eps}^U$, we have matrices $\widetilde C_{j,\eps,S_s}^U =F_{j-1,\eps}^-C_{j,\eps}^U (F_{j,\eps}^+)^{-1}$ representing a change of trivialisation from $\Omega_{j-1}^-$ to $\Omega_j^+$:
\begin{equation}_Btr_{j}^+= {_Btr_{j-1}^-}\cdot \widetilde C_{j,\eps,S_s}^U.\label{Btriv1}\end{equation}
These preserve the flag $W_i(x_\alpha)$,   are upper triangular, with constant diagonal terms and decaying off diagonal terms as one goes to the $\alpha$-type singular point. 
\item In $\Omega_{j,\eps}^L$, we have matrices $\widetilde C_{j,\eps,S_s}^L=F_{j,\eps}^+C_{j,\eps}^L (F_{j,\eps}^-)^{-1}$: 
\begin{equation}_Btr_j^- = {_Btr_{j}^+} \cdot \widetilde C_{j,\eps,S_s}^L.\label{Btriv2}\end{equation}
These preserve the flag $W_i(x_\omega)$,   are lower triangular, with constant diagonal terms and decaying off diagonal terms as one goes to the $\omega$-type singular point. 
\item In $\Omega_{j,\sigma(j),\eps}^G$, we set  $\widetilde{C}_{j,\sigma(j),\eps}^G= F_j^+ C_{j,\sigma(j),\eps}^G (F_{\sigma(j)}^-)^{-1}$:
\begin{equation}_Btr_{\sigma(j)}^- = {_Btr_{j}^+} \cdot \widetilde C_{j,\sigma(j),\eps}^G\label{Btriv3}\end{equation}
 is diagonal. \end{itemize}
With our normalisation, it is natural to take the $F_{j,\eps}^\pm$ as extensions of $F_{1,\eps}^+$ when starting in $\Omega_{1,\eps,S_s}^+$ and crossing all $\Omega_{j,\eps,S_s}^\pm$ counterclockwise on a circle around the singular points. Then the gate matrices $C_{j,\sigma(j),\eps}^G$ exactly compensate for the ramification of $F_{1,\eps}^+$ and $\widetilde C_{j,\sigma(j),\eps}^G=\rm{id}$. All our transition matrices have finite limits at the punctures, and so the bundle extends to the punctures.

{\it C) A globally defined singular connection $d- \frac {A(\eps,x)}{p_\eps(x)} $, with only one trivialization over a disk or $\C$; the formal normal form of $\frac {A(\eps,x)}{p_\eps(x)} $ is $\frac{\Lambda(\eps,x)}{p_\eps(x)}$.} Again, associated to each limit point $x_\alpha$ or $x_\omega$ there  will be (covariant constant) flags $W_i(x_\alpha)$,  $W_i(x_\omega)$ on each sector defined by growth rates as one goes to the singular points along the paths ${\rm Im}(e^{i\theta}t)= $ constant. In the $x$-plane, these paths are generically logarithmic spirals. On each sector, there is then  a pair of (covariant constant) flags $W_i(x_\alpha)$,  $W_i(x_\omega)$. The flags  are transversal for sufficiently small $\eps$, by Lemma  \ref{transverse}. This gives a unique (up to the action of diagonal matrices) basis of flat sections  on each sector, with the $i-$th element of the basis living in $W_i(x_\alpha) \cap W_{n-i+1}(x_\omega)$. We denote the fundamental matrix solution on each sector by $X_{j,\eps,S_s}^\pm$.

Points of view A) and B) are easily seen to be equivalent; the point of this paper is to show that they are equivalent to C). As noted above, the gauge transformation from A) to B) is given by the $F_{j,\eps}^\pm$. Relating  A) and  C), one has the gauge transformation $X_{j,\eps,S_s}^\pm$. The gauge transformation from C) to B) is then  $H_{j,\eps,S_s}^\pm = F_{j,\eps}^\pm (X_{j,\eps,S_s}^\pm)^{-1}$.

The monodromy of the connection on a path $\gamma$ around several singular points is given in different ways: in version C), one integrates the connection, as usual, and applies the convention above. In version A) it is  given by the product  of matrices  $ C_{j,\eps,S_s}^U ,  C_{j,\eps,S_s}^L $, and $ C_{j,\sigma(j),\eps}^G $ or their inverses, taken in the reverse order one meets the corresponding intersection sectors as one moves along $\gamma$. In version B), it is a hybrid, a product in the right order of the matrices $\widetilde C_{j,\eps,S_s}^U, \widetilde C_{j,\eps,S_s}^L, \widetilde C_{j,\sigma(j),\eps}^G $  or their inverses   and of the parallel transports by the diagonal connection on the intersection of $\gamma$ with each sector.

\section{The realization over a DS domain} 

In this section we often drop the index $S_s$, as we will be working with a fixed DS domain. Also, we are  concerned with $\eps$ small, so in fact we only need to push through our constructions on the intersection of $S_s$ with a polydisk $\D_\rho$ around the origin in $\eps$-space.

\subsection{A bundle with connection at $\eps=0$}
For fixed $\eps\in S_s$, the passage from versions A) or  B) above to version C) is fairly well known, and the construction depends analytically on $\eps\in S_s$. It amounts to   finding the necessary gauge transformations on each sector to make the  cocycles $C_{j,\eps }^U, C_{j,\eps }^L, C_{j,\sigma(j),\eps}^G$ trivial.  Various techniques do this; we refer to \cite{yIsY}.  This realizes the systems locally over a DS domain. However, we will want to glue these realizations over DS domains to obtain a global family of systems for $\eps$ in a neighbourhood $\D_\rho$ of $0$. This glueing involves using the action of the gauge group to glue the systems. Gauge transformations over $\C$ form an infinite dimensional group; we would like to reduce the degrees of freedom somewhat, and we do this by compactifying.

One would hope to realize the systems over $S_s$ as singular Fuchsian connections on a trivial bundle over $\CP^1$, by adding in an extra singularity at infinity carrying the required monodromy. This is generically possible, but not always, as was shown by Bolibruch \cite{Bo} and Kostov \cite{K2}. On the other hand, if we allow two singularities, then we can do it, at least for $\eps$ in a small neighbourhood of the origin.

We first realize the system for $\eps= 0$ as a system  on a trivial bundle over $\CP^1$ with an irregular singularity at the origin, and two Fuchsian singularities, one at infinity, and one at some point at a large distance $R$ from the origin, along the positive axis. This will reduce the gauge transformations to constant matrices in $Gl(n,\C)$. We would like the system to be rigid, in a suitable sense:

\begin{definition} A system of linear differential equations $y'=A(x)\cdot y$ is \emph{indecomposable} if  it cannot be gauge transformed to a block diagonal form. \end{definition}

\begin{definition} A bundle with connection is \emph{reducible} if it admits a nontrivial subbundle invariant under the connection. Otherwise, it is \emph{irreducible}, i.e. the connection cannot be conjugated to a block triangular form. It is then also indecomposable.\end{definition}
 
\begin{remark}  We show in Lemma~\ref{normalization_indecomposable} below that any indecomposable connection can be normalized to a unique normal form. Then the same will follow for an irreducible bundle with connection. \end{remark} 

\begin{remark}\label{Basepoint} {\bf Choice of base point and trivialization.} 
 We will consider the monodromy of the connection at a base point $x_b=3R/4$ , with some paths $\ell_R, \ell_\infty$ around the singularities at $R$, $\infty$, whose product $\ell_R\ell_\infty$ is homotopic to an anticlockwise loop around the origin.     We choose as trivialization of the bundle the canonical coordinate on  $\Omega_{k,0}^-$ in which the connection (in picture $A$) is the normal form.  We will choose changes of trivialization (to pictures $B$ and $C$)  which are the identity at the base point, so that the monodromy computed from this base point stays the same.\end{remark}
 
  With respect to a global  trivialization (picture $C$), we will obtain a connection of the form
\begin{equation} y' = \left(\frac {A_0  + A_1 x +...+ A_k  x^k}{x^{k+1}} + \frac{\widehat{A}_R}{x-R}\right)\cdot y= \frac{B(x)}{x^{k+1}(x-R)}\cdot y.\label{nabla-global}\end{equation}
Let $\widehat A_\infty= -A_k - \widehat{A}_R$ be the residue matrix at infinity (which vanishes when $\infty$ is a regular point).

\begin{theorem}\label{thm:eps_0} Suppose given \begin{itemize} 
\item formal invariants given by diagonal matrices $\Lambda_0, \dots \Lambda_k$ such that $\Lambda_0$ has distinct eigenvalues satisfying \eqref{order_eigenvalues}, determining a formal normal form \eqref{normal_form_eps},
\item  normalized invertible (Stokes) upper (resp. lower) triangular matrices $C_{j}^U$, (resp. $C_{j}^L$), $j=1, \dots, k$ determining a monodromy \begin{equation}
 M=(C_{k,0}^L)^{-1}\cdot(C_{k,0}^U)^{-1}\cdot\dots\cdot (C_{1,0}^L)^{-1}\cdot(C_{1,0}^U)^{-1}; \label{monodromy_zero}\end{equation} 
 \item a generic matrix $M_\infty$ representing a conjugacy class  with distinct eigenvalues, with all entries nonzero, close to the identity,
 and such that $M_\infty\cdot M^{-1}$ has distinct eigenvalues. 
\end{itemize}
Then  there exists a globally trivialized  irreducible rational linear differential system \eqref{nabla-global} on $\CP^1$
 with \begin{itemize}
\item formal normal form \eqref{Normal_form_0} at the origin,   
\item  Stokes matrices  $C_{j}^U$ and $C_{j }^L$, $j=1, \dots, k$,
\item monodromies in the global trivialization $ M(\ell_R)$, $M(\ell_\infty)=M_\infty $  around $\ell_R, \ell_\infty$, both with distinct eigenvalues.

\end{itemize}

The automorphisms of the system are multiples of  the identity. Furthermore, acting  by an automorphism of the formal normal form (the constant diagonal matrices), which transforms both the Stokes data and our rational system, it is possible to normalize the coefficients of $n-1$ suitable  monomials of  entries of  $B(x)$ (or of $n-1$ non diagonal entries of either the monodromies $ M(\ell_R)$, $M(\ell_\infty)$) to $1$, thus leading to a unique normalization of \eqref{nabla-global} for each equivalence class under automorphisms of the Stokes data.
\end{theorem}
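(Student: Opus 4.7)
The approach is to prescribe local bundle-with-connection models at each of the three singular points $0,R,\infty$, glue them on $\CP^1$ using the given monodromy data, and show that the resulting holomorphic bundle is trivial with the connection taking the rigid form \eqref{nabla-global}. I expect the main obstacle to be establishing triviality of the global bundle, where the presence of two Fuchsian points (rather than one) and the genericity hypotheses on $M_\infty$ play the essential role, in the spirit of Bolibruch's work.

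\textbf{Local models.} First I would use the standard theory of irregular singularities (\cite{yIsY}) to build, on a small disk around $0$, a bundle with meromorphic connection whose formal normal form is \eqref{Normal_form_0} and whose Stokes matrices are the prescribed $C^U_j,C^L_j$; working in picture $B$ this amounts to equipping each sector $\Omega^\pm_j$ with the diagonal connection $d-\Lambda(x)/x^{k+1}$ and gluing via the pulled-back Stokes cocycle, the monodromy around $0$ being then $M$ of \eqref{monodromy_zero}. At infinity, since $M_\infty$ has distinct eigenvalues and is close to the identity, I pick a diagonalizable residue $\widehat A_\infty$ of small norm with $\exp(-2\pi i\widehat A_\infty)$ conjugate to $M_\infty$. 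The loop relation $\ell_R\ell_\infty\sim\ell_0$ together with the composition convention of the excerpt forces $M(\ell_R)=M_\infty^{-1}\cdot M$; its eigenvalues coincide with those of the inverse of $M_\infty M^{-1}$ and hence are distinct by hypothesis, so I pick a diagonalizable $\widehat A_R$ realizing $M(\ell_R)$ locally.

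\textbf{Global assembly and triviality.} These three local models together with the monodromy representation of $\pi_1(\CP^1\setminus\{0,R,\infty\})$ assemble by Plemelj--R\"ohrl into a meromorphic connection on a holomorphic bundle $E\to\CP^1$. The hard step is to show that $E$ is trivial: by Birkhoff--Grothendieck one has $E\simeq\bigoplus_i\mathcal{O}(d_i)$ and we need all $d_i=0$. Here I would exploit the freedom offered by the second Fuchsian point: shifting integer parts of the eigenvalues of $\widehat A_R$ and moving $\widehat A_R,\widehat A_\infty$ in their adjoint orbits deforms the splitting type, and a Bolibruch-type argument shows that one can always reach the trivial stratum provided one stays close to the identity at $\infty$. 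The hypotheses ``$M_\infty$ close to the identity'' and ``all entries of $M_\infty$ nonzero'' are what place us in the open locus where this deformation argument works. Once $E$ is trivial, writing the connection in the global trivialisation fixed in Remark~\ref{Basepoint} recovers an expression of the form \eqref{nabla-global}, with the trace-free relation $\widehat A_\infty=-A_k-\widehat A_R$ at infinity automatic.

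\textbf{Irreducibility, automorphisms, normalisation.} A nontrivial subbundle invariant under the connection would yield a common invariant subspace for the monodromy generators $M$ and $M_\infty$. Since $M_\infty$ has distinct eigenvalues, every invariant subspace for $M_\infty$ is a sum of its eigenspaces; the assumption that every entry of $M_\infty$ is nonzero in the trivialisation of Remark~\ref{Basepoint}, combined with the constraints imposed on $M$ by the Stokes flag structure at $0$, rules out any common proper invariant subspace. Hence the connection is irreducible, and in particular indecomposable; Schur's lemma then gives that its automorphism group is $\C^\times\cdot\mathrm{id}$. Finally, the constant diagonal gauge transformations, quotiented by these scalars, give an $(n-1)$-parameter residual action on \eqref{nabla-global} (and on the monodromies $M(\ell_R),M_\infty$); choosing $n-1$ nonvanishing monomial coefficients of $B(x)$ (respectively, off-diagonal entries of the monodromies) on which this torus acts freely, one rescales them to $1$, producing the claimed unique normal form.
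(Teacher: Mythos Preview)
Your overall architecture (local models, glue, trivialize, normalise) is the same as the paper's, but the crucial triviality step is a genuine gap, and your use of the hypothesis ``$M_\infty$ close to the identity'' does not match how it is actually exploited.

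\textbf{The triviality step.} You assemble the three local models with the full monodromy $M_\infty$ already in place at infinity, and then appeal to a ``Bolibruch-type argument'' of shifting integer parts at $R$ and moving residues in adjoint orbits to reach the trivial splitting type. No such argument is supplied, and it is not clear one exists in this form: shifting integers at $R$ alone need not suffice, and simultaneously shifting at $\infty$ destroys the smallness of $\widehat A_\infty$ that you yourself imposed. The paper proceeds quite differently. It \emph{first} builds the bundle with only two singularities, at $0$ and $R$, taking the monodromy at infinity to be the \emph{identity}. Whatever splitting type $\bigoplus\mathcal O(a_i)$ results, one then uses an explicit gauge transformation by an automorphism of $\bigoplus\mathcal O(a_i)$ (lower-triangular polynomial in $z=1/x$, solved for degree by degree) to kill the low-order terms of the connection below the diagonal, after which the Schlesinger transformation $S=\mathrm{diag}(z^{-a_i})$ trivializes the bundle while introducing only a first-order pole at $\infty$. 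Only \emph{after} this does one deform the monodromy at infinity from $\mathrm{id}$ to the prescribed $M_\infty$ (compensating at $R$); the hypothesis ``$M_\infty$ close to the identity'' is used precisely here, via the openness of the trivial splitting type under small deformation. Your reading of that hypothesis as placing one ``in the open locus where this deformation argument works'' misses this two-stage mechanism.

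\textbf{Irreducibility.} Your argument via eigenspaces of $M_\infty$ is also off target. The hypothesis is that all entries of $M_\infty$ are nonzero \emph{in the picture~B trivialization over $\Omega_{k,0}^-$}, i.e.\ in the basis in which the Stokes flags at $0$ are the standard coordinate flags. An invariant subbundle, restricted to the origin, must respect those flags and hence be spanned by a subset of the coordinate vectors in that basis; but a matrix with no zero entry preserves no coordinate subspace. No passage to the eigenbasis of $M_\infty$ is needed (and in that eigenbasis the ``all entries nonzero'' condition is not visibly useful).
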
 
\begin{proof} 
 Let us first build a bundle  in picture $B$ of Section~\ref {three-pictures} above, in the background of the diagonal connection. We take our bundle on a neighbourhood of the origin, defined on sectors $\Omega_{j,0}^\pm$, with transition matrices $\widetilde C_{j,0}^U, \widetilde C_{j,0}^L$.  By the classical result going back to Birkhoff \cite{B}, this gets realized in picture $C$ as a   singular connection of Poincar\'e rank $k$ on a disk $\D_{R}$ of radius $R$.  

From the base point $x_b= 3R/4$  with the trivialization of the bundle fixed above, the monodromy $M$ of the connection on the  circle of radius $3R/4$ around the origin starting in $\Omega_{k,0}^-$ is given by \eqref{monodromy_zero}. Now, build a holomorphic connection on the annulus $\A_r=\{r\in (R/2, 3R)\}$, with a singularity of Fuchsian type at $x= R$, whose monodromy along the circle $r= \frac{3R}{4}$ (resp. $r= 2R$) is $M $ (resp. the identity). (Note that it could happen that we start with $x=R$ regular when $M$ is the identity.) We glue this to our singular connection on the disk of radius $R$. The result is a bundle with a connection over the disk of radius $3R$, with an irregular singularity at the origin, and another Fuchsian singularity at $x=R$. It has trivial monodromy around the boundary. Now glue this to the bundle on the disk $\Omega_\infty=\{r>2R\}$ with the trivial connection. The result is a global bundle with connection on $\CP^1$ with two singularities, one at the origin, and one at $x=R$. Any bundle on $\CP^1$ decomposes as a sum of line bundles; let this one have holomorphic type ${\mathcal O}(a_1)\oplus {\mathcal O}(a_2)\oplus...\oplus {\mathcal O}(a_n) $. 

We now switch trivializations for a while, to the standard trivializations on $\oplus_{i=1}^n \mathcal O(a_i), a_1\leq a_2\leq ...\leq a_n$. Let  $\mathcal{A}$ be our connection  in this trivialization, and let $z=\frac1x$. Let $S={\rm Im}(x^{a_i})=diag(z^{-a_i})$ be the transition from  $U_\infty$, a neighborhood of $\infty$ (resp. $0$) in $x$-space (resp. $z$-space), to $U_0 = \{x \neq \infty\}$. On $U_\infty$, the connection is represented by a holomorphic matrix $A(z)=(A_{ij}(z))$. A Schlesinger transformation on $U_\infty$ given by $S = {\rm Im}(z^{-a_i})$ modifies the bundle to the trivial one, modifies the connection matrix by  $ SAS^{-1} +dS\ S^{-1}$, and changes the off-diagonal terms of the connection to $ SAS^{-1}$, i.e, $A_{ij}$ changes to $A_{ij}z^{a_j-a_i}$, thus introducing poles potentially of order greater than $1$ below the diagonal if $A_{i,j}$ is suitably generic.

We thus want to normalise the $A_{ij}$ by killing the terms of order less than $a_i-a_j$ in $z$ in the lower triangular terms $A_{i,j}$  {\it before} conjugating by $S$.

To do this we use the automorphisms $g$ of $\oplus_i \mathcal O(a_i)$. These are also given by invertible lower triangular matrices, which in the $U_\infty$ trivialization are polynomials, with $g_{ij}$ of degree  $a_i-a_j$, representing a section of ${\rm Hom}(\mathcal{O}(a_j), \mathcal{O}(a_i))$. Indeed, because of the constraint on the degrees of the $g_{ij}$, then $SgS^{-1}$ is again a lower triangular invertible polynomial matrix in $x$. 

Let us consider automorphisms $g(z) $ such that $g(0)=\mathrm{Id}$, i.e. $g(z)=\mathrm{Id} +N(z)$, with $N(z)=O(z)$ lower triangular nilpotent. The automorphism changes the connection by
 $$ A\mapsto B=gAg^{-1} + g'g^{-1}$$
One  wants to get rid of terms in $B_{ij}$ of degree less than $m_{ij}=a_i-a_j$ for $i>j$; in other words, solving for some of the Taylor series of the differential equation above. 

Since $N$ is nilpotent, then $g^{-1}(z) = \mathrm{Id} -N(z)+ N^2(z) - \dots +(-1)^{n-1}N^{n-1}(z)$. Let $B(z)=(B_{ij}(z))$, $N(z)= (g_{ij}(z))$ and $E(z)=g(z)A(z)+ g'(z)=(E_{ij}(z))$. And let $B_{ij}^k, g_{ij}^k, A_{ij}^k, E_{ij}^k$ be the terms of order $k$ in $z$ of $B_{ij}(z)$, $g_{ij}(z)$, $A_{ij}(z)$, $E_{ij}(z)$ respectively. Then 
$E_{ij}^k= A_{ij}^k + (k+1)g_{ij}^{k+1}+ Q_{i,j}^k(A,g),$ and so
$$B_{ij}^k= A_{ij}^k + (k+1)g_{ij}^{k+1}+ \ov{Q}_{i,j}^k(A,g).$$
where $Q_{ij}^k$ and $\ov{Q}_{i,j}^k$ are polynomials in the $A_{\ell,\ell'}^{j'}$ and $g_{\ell,\ell'}^{j'}$, with $j'\leq k$.   Hence we can solve successively degree by degree the equations corresponding to asking that the terms of $B_{ij}$ of degree $m$ be equal to zero, starting from $m=0$; solving for $ B_{ij}^k$ ``uses up'' $g_{ij}^{k+1}$; the form of the equations is such that all equations corresponding to putting some terms of degree $m$ in some $B_{ij}$ with $i>j$ are uncoupled, and hence can  be solved independently for a fixed degree. 

 Applying then the Schlesinger transformation $S$ to $B$ makes the bundle trivial and introduces a Fuchsian singularity (via the $S^{-1}dS$ term) at $\infty$.

\begin{remark}
 We could have achieved the same purpose by using a polynomial gauge transformation on $U_0$ and the  \emph{permutation lemma} of Bolibruch (Lemma 16.36 of \cite{yIsY}), so as to bring the singular point at infinity to be Fuchsian. \end{remark}

\begin{remark}
 Note that, generically, if we choose the right residues for our Fuchsian singularity at $R$, the bundle is already trivial and we do not need to perform the Schlesinger transformation. In that particular case, the residue matrix at infinity $A_{\infty,0}$ vanishes.  
\end{remark}

We have built a bundle on $\CP^1$ with two sets of trivializations: the first, version B, has as open sets the sectors and  the disk $\D_\infty=\{|x|>R/2\}$; it has the transition matrices $\widetilde C_{j,0}^U, \widetilde C_{j,0}^L$ and $\Gamma$ between $\Omega_{k,\eps}^-$ and the annulus $\A_R$. In this trivialization, the monodromy around $R$ is $M$, and  the monodromy around infinity  is the identity. One also has a global trivialization, picture $C$, where our connection at $\eps =0$ is of the form \eqref{nabla-global}; we can choose this trivialization so that the leading term of the connection is $\Lambda_0$. So far, though, the monodromy around infinity is still trivial even though infinity can be a singular point, and the bundle-connection pair might have non-trivial automorphisms.

We would like  
 \begin{itemize} 
 \item that our pair (bundle, connection) be irreducible, since then a further normalization will allow bringing it to a unique form (see Lemma~\ref{normalization_indecomposable} below);
 \item that the point at $\infty$ have diagonalizable monodromy with distinct eigenvalues;
 \item that the singular point at $x=R$ have diagonalizable  monodromy with distinct eigenvalues.\end{itemize} 
 To do this, we deform the connection in picture $B$, keeping the same Stokes matrices, but modifying  the monodromy around infinity from trivial to $M_\infty$, while modifying the monodromy around $R$   in the opposite direction, so that  the monodromy along the circle of radius $R/2$ stays constant. This uses Lemma  \ref{Lemma:decomposition} below. 
  
  One can arrange that the monodromy around  $R$ is also of the desired form, by Lemma~\ref{Lemma:decomposition}  below.
 
  Since $M_\infty$ has all its coefficients nonzero, the system is irreducible (proof in Lemma~\ref{normalization_irreducible}  below).  

 A key point is that a small deformation of a trivial bundle on $\CP^1$ remains trivial.  Hence, for $M_\infty$ close to the identity, our result is still a trivial bundle. Passing to our picture $C$, of a global trivialization, we get our desired connection. 
 
One can normalise the coefficients of the connection as in Lemma~\ref{normalization_indecomposable}, thus ending the proof of the theorem. \end{proof} 

 \begin{lemma}\label{Lemma:decomposition} 
Any invertible matrix $B$ can be written as a product $B=C_1C_2$ of two invertible diagonalizable matrices $C_1$ and $C_2$ with distinct eigenvalues. One of the $C_i$ may be chosen in a Zariski open set.
\end{lemma}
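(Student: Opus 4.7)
The plan is to reduce the statement to a Zariski-genericity argument. Observe that the condition for a matrix $M\in GL_n(\C)$ to have distinct eigenvalues is Zariski open: it is the non-vanishing of the discriminant of the characteristic polynomial, which is a polynomial in the entries of $M$. A matrix with distinct eigenvalues is automatically diagonalizable, so it suffices to produce $C_1,C_2\in GL_n(\C)$ with $C_1C_2=B$ and with both having distinct eigenvalues.

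Setting $C_2 = C_1^{-1} B$, I would consider the two subsets of $GL_n(\C)$
\[V_1=\{C_1\in GL_n(\C):C_1\text{ has distinct eigenvalues}\},\]
\[V_2=\{C_1\in GL_n(\C):C_1^{-1}B\text{ has distinct eigenvalues}\}.\]
Both are Zariski open in $GL_n(\C)$: the first directly, the second because the map $C_1\mapsto C_1^{-1}B$ is a morphism (in fact an algebraic isomorphism) of $GL_n(\C)$, and the discriminant-non-vanishing condition pulls back to a Zariski open condition on $C_1$.

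The first step requiring a small verification is that these two sets are non-empty. For $V_1$, any diagonal matrix with distinct nonzero entries works. For $V_2$, pick any diagonal matrix $D$ with distinct nonzero entries and set $C_1=BD^{-1}$; then $C_1^{-1}B=D$ has distinct eigenvalues. Since $GL_n(\C)$ is an irreducible algebraic variety (a Zariski open subset of the irreducible affine space $M_n(\C)=\C^{n^2}$), any two non-empty Zariski open subsets must intersect, so $V_1\cap V_2$ is a non-empty Zariski open set in $GL_n(\C)$.

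Thus for any $C_1\in V_1\cap V_2$, both $C_1$ and $C_2=C_1^{-1}B$ have distinct eigenvalues, hence are invertible and diagonalizable, and satisfy $C_1C_2=B$. The Zariski open set promised in the statement is precisely $V_1\cap V_2$ (one can choose $C_1$ in it). The only genuine content is the non-emptiness of $V_2$, which is handled by the explicit choice $C_1=BD^{-1}$ above; everything else is the standard fact that finite intersections of non-empty open sets in an irreducible variety are non-empty.
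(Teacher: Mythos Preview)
Your proof is correct and follows essentially the same route as the paper: both argue that the set of $C_1$ with distinct eigenvalues and the set of $C_1$ with $C_1^{-1}B$ having distinct eigenvalues are non-empty Zariski open subsets of the irreducible variety $GL_n(\C)$, hence intersect. Your version is slightly more explicit in verifying non-emptiness of $V_2$ via $C_1=BD^{-1}$, but the idea is identical.
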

\begin{proof} The map $P: GL(n,\C)\rightarrow GL(n,\C)$ defined by $P(C_1)=C_1^{-1}B$ is holomorphic (even algebraic) and invertible. Let $\mathcal{D}\subset GL(n,\C)$ be the subset of diagonalizable matrices with distinct eigenvalues. It is the complement of an algebraic subset (where the discriminant of the characteristic polynomial vanishes). Then $P(\mathcal{D})\cap \mathcal{D}$ is the complement of an algebraic set in $GL(n,\C)$, hence nonvoid of full measure. \end{proof}

\begin{lemma}\label{normalization_irreducible} Suppose that the monodromy $M_\infty$ has all its coefficients nonzero in the trivialization of the bundle over $\Omega_{k,0}^-$ in which the connection  is given by the formal normal form \eqref{Normal_form_0} (a ``picture B'' trivialization). Then the system is irreducible. \end{lemma}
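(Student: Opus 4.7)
The approach is by contradiction: assume a proper nontrivial holomorphic subbundle $V \subset E$ invariant under the connection exists, and derive that some entry of $M_\infty$ must vanish.

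First I would restrict $V$ to the sector $\Omega_{k,0}^-$ and pass to the picture B trivialization, where the connection is the diagonal formal normal form $d - \Lambda(x)/x^{k+1}$. On this simply connected sector, flat sections are $\Phi(x) v$ with $\Phi(x) = \exp(\int \Lambda/p)$ diagonal and $v \in \C^n$, so $V$ restricted to the sector has fiber $V_x = \Phi(x) S$ for a fixed subspace $S \subset \C^n$ of dimension $r = \operatorname{rk} V$. Equivalently, $S$ is the subspace of the $n$-dimensional space of local flat sections corresponding to $V$, and every element of the monodromy group preserves $S$; in particular $M_\infty$, computed in the picture B basis at the base point, satisfies $M_\infty S \subset S$.

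The main obstacle is to show that $S$ must be a coordinate subspace $\C^I$. I would argue this from the holomorphic extension of $V$ across the irregular singular point $x=0$. Writing a basis of $S$ in row-echelon form $\{v^1,\dots,v^r\}$ with leading indices $j_1<\cdots<j_r$, and using the fact that in $\Omega_{k,0}^-$ the exponentials $\exp(-\lambda_j/(kx^k))$ have a strict total ordering of magnitudes (this is the defining feature of the Stokes sector), the dominant term in $\Phi(x)v^i$ is $e_{j_i}$, so the sectorial limit of $V_x$ in the Grassmannian is $\C^J$ for $J=\{j_1,\dots,j_r\}$. In local Grassmannian coordinates centered at $\C^J$, $V_x$ is then represented by a matrix whose entries are of the form $v^i_m \exp((\lambda_{j_i}-\lambda_m)/(kx^k))$ for $m\notin J$: these tend to zero along rays in the sector but have essential singularities at $x=0$, so the map $x\mapsto V_x$ is holomorphic at $x=0$ only if every such entry vanishes identically, i.e., $v^i_m = 0$ for all $m\notin J$. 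This forces $S = \C^J$. Alternatively, one may invoke the general structure theory of formal irregular singularities: any invariant subbundle admits its own formal normal form which must be a sub-normal-form of the ambient $\Lambda$, and since $\Lambda$ is diagonal with distinct leading eigenvalues, such sub-normal-forms are exactly those indexed by coordinate subsets.

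Having reduced to $S = \C^I$ with $\emptyset\subsetneq I\subsetneq\{1,\dots,n\}$, the conclusion is immediate. The inclusion $M_\infty \C^I \subset \C^I$ forces $(M_\infty)_{ji}=0$ for every $j\notin I$ and $i\in I$, of which there is at least one pair. This contradicts the hypothesis that all entries of $M_\infty$ are nonzero, so no such $I$ exists, $V$ is trivial or all of $E$, and the system is irreducible.
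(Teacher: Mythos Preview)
Your proposal is correct and follows essentially the same route as the paper: reduce to showing that the invariant subspace $S$ of flat sections in the picture~$B$ trivialization is a coordinate subspace $\C^I$, then observe that $M_\infty\C^I\subset\C^I$ forces some entry of $M_\infty$ to vanish. The paper's proof is terser --- it just invokes the two transverse flags of growth rates at the origin to force $S$ to be spanned by basis vectors --- and your ``alternative'' argument via the formal structure theory at the irregular point is exactly that.

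One imprecision in your primary argument: the assertion that $x\mapsto V_x$ is \emph{holomorphic at $x=0$} in the picture~$B$ trivialization is not justified, since picture~$B$ lives only on sectors and the sectorial gauge $H(x)$ relating it to a genuine local holomorphic frame (picture~$C$) is asymptotic to a formal series but generally not convergent. What you actually know is that $V_x^{(B)}=H(x)V_x^{(C)}$ is \emph{bounded} on the sector as $x\to0$. This is enough: the Stokes sector $\Omega_{k,0}^-$ has angular opening exceeding $\pi/k$, so $\exp\bigl((\lambda_{j_i}-\lambda_m)/(kx^k)\bigr)$ with $\lambda_{j_i}\neq\lambda_m$ is unbounded on it (its argument sweeps out more than a half-plane). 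Boundedness of the Grassmannian coordinates thus forces $v^i_m=0$, giving $S=\C^J$. Equivalently, approaching $x=0$ along both the $\alpha$- and $\omega$-separatrix directions makes the forward and reverse echelon pivots of $S$ coincide, which is the two-flag argument the paper has in mind.
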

\begin{proof} The point is then that from the point of view of the singularity at the origin, any decomposition must respect the geometric structures at the origin, for example the flags of growth rates. In the trivialization in question, these flags are the standard ones; this forces an invariant subbundle to be generated by a subset of the basis vectors.  On the other hand, applying the  change of basis to bring these basis vectors to be the first $k$ vectors, which is  a permutation matrix $P$, the transformed $PM_\infty P^{-1}$ still has all its entries non-zero,  and so does not leave the subbundle invariant, as it would have to be block triangular to do so.  
\end{proof}

\begin{lemma}\label{normalization_indecomposable} We consider an indecomposable linear differential system \eqref{nabla-global}, where $A_0$ is diagonal with distinct eigenvalues. Then there exists $n-1$ distinct pairs $(i_\ell, j_\ell)$, with $i_\ell\neq j_\ell$, $\ell=1, \dots, n-1$ and exponents $m_{i_\ell, j_\ell}\in\{1, \dots, k+1\} $, such that the equation is conjugate by means of an invertible diagonal transformation to a unique system with the coefficient of the monomial $x^{m_{i_\ell, j_\ell}}$ of $(B)_{i_\ell, j_\ell}$ normalized to $1$. The only automorphisms of this normalised  unique system are scalars $cI_n$ for some $c\in \C^*$. 
\end{lemma}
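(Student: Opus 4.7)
The strategy is to exploit the residual torus action of constant diagonal gauge transformations, together with indecomposability, to reduce the normalization to choosing a spanning tree of an associated graph. To identify the residual gauge group, observe that any constant invertible $P$ that preserves the diagonal form of $A_0$ (with distinct eigenvalues in the prescribed order \eqref{order_eigenvalues}) must itself be diagonal. A diagonal conjugation $D=\mathrm{diag}(d_1,\dots,d_n)$ multiplies each entry $(B(x))_{ij}$ by $d_i/d_j$, and hence multiplies each coefficient of $x^m$ in $(B)_{ij}$ by the same ratio. Since scalars $cI_n$ act trivially, the effective group is $(\C^*)^n/\C^*\cong (\C^*)^{n-1}$, exactly matching the $n-1$ normalizations to be imposed.

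Next I would associate to the system the undirected graph $G$ on $\{1,\dots,n\}$ in which $\{i,j\}$ is an edge whenever some coefficient of $(B)_{ij}$ or $(B)_{ji}$ in degree $1\le m\le k+1$ is nonzero. The main claim is that $G$ is connected. Using $B(x)=(A_0+A_1x+\cdots+A_kx^k)(x-R)+x^{k+1}\widehat A_R$ and the fact that $A_0$ is diagonal (so $(B)_{ij}$ has no constant term for $i\neq j$), a triangular recursion on $m$ shows that if no edge of $G$ crosses a partition $\{1,\dots,n\}=I_1\sqcup I_2$, then every off-diagonal entry of each of $A_0,A_1,\dots,A_k,\widehat A_R$ between $I_1$ and $I_2$ must vanish. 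The system would then be block-diagonal, contradicting indecomposability. Hence $G$ admits a spanning tree $T$ with exactly $n-1$ edges.

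For each edge of $T$ I would choose an orientation $(i_\ell,j_\ell)$ and an exponent $m_\ell\in\{1,\dots,k+1\}$ such that $c_\ell:=(B)_{i_\ell,j_\ell}^{m_\ell}\neq 0$. The normalization conditions $(d_{i_\ell}/d_{j_\ell})c_\ell=1$, one per tree edge, can be solved inductively: root $T$ at any vertex, fix $d_{\mathrm{root}}\in\C^*$ arbitrarily, and propagate outward along $T$. Acyclicity of $T$ ensures that each remaining $d_i$ is determined uniquely, so the solution set is exactly the one-parameter family $\{cD_0 : c\in\C^*\}$ for a particular $D_0$. This simultaneously produces the unique normalized representative and shows that the residual automorphism group is $\{cI_n : c\in\C^*\}$.

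The main subtlety is the implication ``$G$ disconnected $\Rightarrow$ system decomposable'': it relies on the exact polynomial relations between the coefficients of $B(x)$ and the matrices $A_0,\dots,A_k,\widehat A_R$, with $(A_0)_{ij}=0$ for $i\neq j$ supplying the base case of the recursion that propagates vanishing up through the coefficients of $x,x^2,\dots,x^{k+1}$. Once this graph-theoretic reformulation of indecomposability is in place, the remaining steps---selecting the spanning tree and inductively solving the diagonal equations along it---are essentially routine linear algebra on the torus.
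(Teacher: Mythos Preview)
Your proof is correct and rests on the same core observation as the paper's: indecomposability of the system forces the off-diagonal pattern of $B(x)$ to admit no nontrivial block splitting. The paper phrases this as freeness of the torus action---if two diagonal matrices $D\neq D'$ with $d_1=d_1'=1$ give the same conjugate of $B$, one partitions indices by $d_i=d_i'$ and shows $b_{ij}\equiv 0$ across the partition, whence decomposability---whereas you phrase it as connectivity of the graph $G$ whose edges record nonzero off-diagonal entries of $B$. These are equivalent statements.

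Where your argument adds something is in the passage from this freeness/connectivity to the actual existence of the $n-1$ normalizable monomials. The paper asserts that freeness suffices ``in order to be able to normalize $n-1$ coefficients'' without exhibiting them; your spanning-tree construction makes this step explicit, choosing one nonzero coefficient per tree edge and solving the monomial system $d_{i_\ell}/d_{j_\ell}=c_\ell^{-1}$ by propagating outward from a root, with acyclicity guaranteeing a unique solution up to the overall scalar. This is the natural way to complete the paper's somewhat telegraphic argument. One small simplification: your triangular recursion recovering the block-diagonal form of $A_1,\dots,A_k,\widehat A_R$ from that of $B$ is correct but unnecessary, since once $B(x)$ itself is block-diagonal the connection matrix $B(x)/\bigl(x^{k+1}(x-R)\bigr)$ is already block-diagonal, which is exactly what decomposability requires; the paper works directly at the level of $B$.
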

\begin{proof} The normalization is done by action of invertible diagonal matrices. In view of the fact that the matrices  $cI_n$ are symmetries of the system, we limit ourselves to diagonal matrices $D= \mathrm{diag}(d_1, \dots, d_n)$  with $d_1=1$. In order to be able to normalize $n-1$ coefficients, we need to show that if $D\neq D'$ are two such matrices such that the conjugates of $x^{k+1}(x-R)y'=B(x)\cdot y$ by $D$ and $D'$ are the same, then the system is decomposable. Let $\{1,\dots, n\}= I\cup I'$, where $d_i=d_{i}'$ if and only if $i\in I$. Then both $I$ and $I'$ are nonvoid. Moreover for any $i\in I$ and $j\in I'$,  $b_{ij}(x)\equiv b_{ji}(x)\equiv0$. Indeed, $(DB(x)D^{-1})_{ij}= d_id_j^{-1} b_{ij}(x)$  and $(D'B(x)(D')^{-1})_{ij}= d_i'(d_j')^{-1} b_{ij}(x)$, yielding $b_{ij}(x)\equiv 0$ and, similarly, $b_{ji}(x)\equiv0$. Hence, the system is decomposable.   \end{proof}

 The proof of Theorem~\ref{thm:eps_0} shows that there is considerable choice for the deformed monodromy $M_\infty$, indeed an open set's worth. We note that it provides a local normal form in the sense that given the choice of Stokes matrices $C_{j}^U$,   $C_{j}^L$, one then has an open set of suitable $M_\infty$ to choose; once one has done this,  the resulting 
$C_{j}^U$,   $C_{j}^L$,  $M_\infty$ then determine the  normal form \eqref{nabla-global} uniquely. One can use the diagonal automorphisms to further normalise the  connection, by normalizing some off diagonal coefficients to one.

\subsection{The special case of an irreducible irregular singularity}

The irreducibility we have is for the system as a whole on $\CP^1$; on the other hand one has generically that the irregular singularity itself is irreducible. Bolibruch showed the following theorem

\begin{theorem} (\cite{Bo} or Theorem 20.4 of \cite{yIsY})\label{thm:Bolibruch}  A linear system of differential equations with a nonresonant locally irreducible singularity of Poincar\'e rank $k$ is locally holomorphically equivalent to a polynomial system $x^{k+1} y'=A(x)\cdot y$, where $A(x)= \sum_{i=0}^k A_i x^i$. \end{theorem}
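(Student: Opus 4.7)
The plan is to mimic the realization procedure of Theorem~\ref{thm:eps_0} at $\eps=0$, but to exploit local irreducibility so as to avoid introducing any auxiliary Fuchsian singularity at $x=R$. The data of the germ consists of a diagonal formal normal form together with the normalized Stokes matrices $C_1^U,C_1^L,\ldots,C_k^U,C_k^L$, with corresponding monodromy $M$ around the origin as in \eqref{monodromy_zero}; local irreducibility of the singularity translates into irreducibility of the joint action of this Stokes/formal-monodromy datum. By the local holomorphic classification (the complete-modulus result underlying \cite{HLR}), it suffices to produce one holomorphic representative of this modulus on a disk; the theorem will then follow by restriction.

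First I would run the construction of Theorem~\ref{thm:eps_0} to build, on a disk $\D_R$ about the origin, a connection realizing the given formal data and Stokes cocycle, with monodromy $M$ on its boundary circle. Instead of absorbing $M$ into an extra Fuchsian point at $R$, I would extend across $\CP^1\setminus\{0\}$ by gluing in a holomorphic connection on $\{|x|>R/2\}$ with a single Fuchsian singularity at $\infty$ and monodromy $M^{-1}$ there. This gluing is unobstructed because the fundamental group of $\CP^1\setminus\{0,\infty\}$ is $\Z$ and the only constraint is compatibility of the two local monodromies. The outcome is a bundle with meromorphic connection on $\CP^1$ having exactly two singularities: an irregular one of Poincar\'e rank $k$ at $0$ and a Fuchsian one at $\infty$.

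The main obstacle, exactly as in Theorem~\ref{thm:eps_0}, is that the resulting bundle need not be holomorphically trivial. Here is where the local irreducibility hypothesis is essential: by the same Schlesinger / permutation-lemma argument used inside the proof of Theorem~\ref{thm:eps_0} on the $U_\infty$ patch, one can modify the bundle to the trivial one $\mathcal O^{\oplus n}$ by polynomial gauge transformations supported near $\infty$. In the locally irreducible situation the transformed connection still has at worst a simple pole at $\infty$ after the Schlesinger moves: since the local system at $0$ admits no proper invariant subbundle, the bundle splitting type cannot be forced to be unbalanced in the obstructing way that produces the Bolibruch counterexamples to Riemann-Hilbert. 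This is really the content of Bolibruch's theorem in the irreducible case, and this step is where I expect the genuine work to lie.

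Once the bundle is trivial and we choose a global trivialization (picture $C$ of Section~\ref{three-pictures}), the connection matrix is a rational $\mathrm{gl}_n$-valued $1$-form with a pole of order $k+1$ at $0$ and an order-one pole at $\infty$, hence of the form $B(x)\,dx/x^{k+1}$ with $B(x)$ a polynomial of degree at most $k$. Writing $A(x)=B(x)=\sum_{i=0}^k A_i x^i$ gives the polynomial system $x^{k+1}y'=A(x)\cdot y$, and restriction of this global representative to a small disk about the origin yields the required local holomorphic equivalence.
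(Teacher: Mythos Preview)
The paper does not prove this theorem: it is stated as a citation of Bolibruch's result (reference \cite{Bo}, or Theorem~20.4 of \cite{yIsY}), with no argument supplied. So there is no proof in the paper to compare your sketch against.

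Your outline is a correct description of the architecture of such a proof: realize the local data as a bundle with meromorphic connection on $\CP^1$ having singularities only at $0$ and $\infty$, then trivialize the bundle while keeping the pole at $\infty$ Fuchsian, and read off the polynomial connection matrix. The final degree count (pole of order $k+1$ at $0$, simple pole at $\infty$ forces $A(x)/x^{k+1}$ with $\deg A\le k$) is right.

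But, as you yourself flag, the entire substance of the theorem sits in the step you leave open. The Schlesinger transformation $S=\mathrm{diag}(z^{-a_i})$ used in the proof of Theorem~\ref{thm:eps_0} will in general raise the pole order at $\infty$; in that proof the damage is undone by exploiting the automorphisms of $\bigoplus_i\mathcal O(a_i)$ \emph{together with} the freedom in the auxiliary singularity at $R$. With no point at $R$, what you need is that local irreducibility at $0$ bounds the gaps $a_i-a_j$ in the splitting type sharply enough that the pole stays simple after normalisation. Your sentence ``the bundle splitting type cannot be forced to be unbalanced in the obstructing way'' is the statement to be proved, not a proof of it; Bolibruch's actual argument is a nontrivial induction relating the Birkhoff--Grothendieck exponents to the irreducibility hypothesis via a chain of meromorphic gauge transformations. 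So your sketch locates the difficulty accurately but does not resolve it.
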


\begin{corollary} A linear system of differential equations with a nonresonant locally irreducible singularity of Poincar\'e rank $k$ is locally holomorphically equivalent to a unique normalized polynomial system $x^{k+1} y'=A(x)\cdot y$, where $A(x)= \sum_{i=0}^k A_i x^i$, with a normalization as in Lemma~\ref{normalization_indecomposable}. \end{corollary}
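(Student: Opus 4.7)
The plan is to derive this as a direct consequence of Theorem~\ref{thm:Bolibruch} (Bolibruch) combined with Lemma~\ref{normalization_indecomposable}. First, by Bolibruch's theorem, the germ is locally holomorphically equivalent to a polynomial system $x^{k+1} y' = A(x)\cdot y$ with $A(x) = \sum_{i=0}^k A_i x^i$. The nonresonance hypothesis allows a constant linear change of coordinates that brings $A_0$ into diagonal form with distinct eigenvalues, matching the hypothesis of Lemma~\ref{normalization_indecomposable}.

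Next, I would check that local irreducibility of the germ at the origin passes to indecomposability of the global polynomial system on $\CP^1$: any block-diagonal decomposition of the polynomial system would restrict to a decomposition at the germ, contradicting irreducibility. Hence Lemma~\ref{normalization_indecomposable} applies in the specialization where the finite Fuchsian pole at $R$ is absent; its proof is unchanged, since it relies only on the diagonal leading term and on indecomposability. This produces, via a constant invertible diagonal conjugation, a normalized polynomial form with the coefficients of $n-1$ chosen off-diagonal monomials of $A(x)$ equal to $1$, and gives uniqueness within each orbit of the constant diagonal group.

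The delicate point is promoting this to uniqueness across the entire local analytic equivalence class. For this, suppose $x^{k+1} y' = A(x)\cdot y$ and $x^{k+1} y' = \widetilde A(x)\cdot y$ are two such normalized polynomial forms both locally equivalent to the germ. A local analytic gauge $g$ between them, holomorphic at $0$, satisfies $x^{k+1} g' = \widetilde A g - g A$; since the right-hand side has polynomial coefficients, $g$ extends to an entire function on $\C$. Both polynomial systems have a Fuchsian singularity at $\infty$, and because the monodromy around $0$ is an invariant of the local equivalence class while the product of monodromies on $\CP^1$ is trivial, the monodromies at $\infty$ agree; this forces polynomial growth of $g$ at $\infty$. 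Thus $g$ is a polynomial matrix. Irreducibility of the connection on $\CP^1$ combined with Schur's lemma then forces $g$ to be a scalar, and the normalization of Lemma~\ref{normalization_indecomposable} pins this scalar to $1$, yielding $A = \widetilde A$.

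The existence half is essentially immediate from Bolibruch's theorem. The hard part is the final uniqueness step, specifically globalizing the local gauge to a polynomial matrix and using irreducibility to reduce it to a scalar; this requires controlling $g$ at $\infty$ via the Fuchsian structure and the matching of global monodromies, which is where the assumption of a locally \emph{irreducible} (rather than merely indecomposable) singularity is crucial.
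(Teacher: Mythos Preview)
The paper's own proof is a three-line argument: apply Bolibruch's theorem to obtain a polynomial form, diagonalize $A_0$, note that irreducible implies indecomposable, and invoke Lemma~\ref{normalization_indecomposable}. The paper does not spell out the uniqueness across the full local holomorphic equivalence class; it is taken as following from the broader framework (the modulus determines the realization on $\CP^1$ up to constant gauge, as in Theorem~\ref{fund_thm}).

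Your first two paragraphs match the paper's approach. Your third paragraph attempts something the paper leaves implicit, and this is where a genuine gap appears. The step ``irreducibility of the connection on $\CP^1$ combined with Schur's lemma then forces $g$ to be a scalar'' does not go through as written. Schur's lemma applies to \emph{endomorphisms} of a single irreducible object; your $g$ is a morphism between two a priori different connections $\nabla_A$ and $\nabla_{\widetilde A}$. Moreover, a non-constant polynomial matrix with constant determinant is not an automorphism of the trivial bundle on $\CP^1$ (it has a pole at $\infty$), so you are not in a category where Schur applies directly. What you actually need is that $g$ is \emph{bounded} at $\infty$, hence constant by Liouville; then $g(0)$ commutes with $\Lambda_0$, so $g$ is diagonal, and Lemma~\ref{normalization_indecomposable} finishes. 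But ``same monodromy at $\infty$'' does not by itself give boundedness of $g$: you need the residues $-A_k$ and $-\widetilde A_k$ (not just their eigenvalues) to be compatible, and you have no nonresonance hypothesis at $\infty$. The clean route is the one the paper uses elsewhere (e.g.\ the proof of Theorem~\ref{fund_thm}): compare fundamental solutions built from eigensolutions at $\infty$, using that both systems share all formal invariants at $0$ and hence the eigenvalues of the residue at $\infty$, to exhibit $g$ as a product of holomorphic invertible matrices near $\infty$.
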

\begin{proof} 
We can of course suppose that $A_0$ is diagonal. Moreover, an irreducible bundle with connection is in particular indecomposable. Hence, it is possible to apply a further normalization of $(n-1)$ off-diagonal  coefficients using Lemma~\ref{normalization_indecomposable}.\end{proof}

The condition of irreducibility in Bolibruch's theorem in some sense explains the need for introducing the extra singularity  in our normal form; we need some freedom so that the monodromy around $\infty$ be sufficiently transverse to the diagonal structure at the origin coming from $A(0)$. 
 
\subsection{Deforming in a fixed DS domain; continuity at $\eps = 0$ in a disk $\D_R$.}\label{sec:deforming} 

We would now like to vary $\eps$ in a fixed DS domain $S_s$. Since the DS domain is fixed, we drop the index $s$.  We first work on a disk $\D_R$ in $x$-space. Later, in Section~\ref{sec:Sectoral_domain}
 we will extend to the whole of $\CP^1$.
 
We begin by clarifying our notion of a continuous family of bundles plus connection, as $\eps$ varies. Indeed, if one has a family of bundles given in pictures $A$ or $B$ of subsection \ref{three-pictures} above, one can imagine that there could be difficulties in understanding continuity as one varies $\eps$, since the transition matrices are defined over sets varying with $\eps$. Within the DS domain $S_s$, this poses no problem, as the open sets are varying quite smoothly. But one should be a bit careful as one moves in to  $\eps =0$, or more generally, as one is going to the boundary of the DS domain, at a point where the singularities merge together, for example with two poles becoming a double pole.  

On the other hand, if one thinks of our pair of (bundle, flat connection) as a family of flat continuous connections in a global  (not necessarily holomorphic) trivialization (let us call this, {\bf picture D}), then it is fairly easy to see what a good notion of continuity should be. We first must fix the singularities of the connections; this amounts to choosing conjugacy classes of the singular part of the connection, up to holomorphic gauge transformation. This polar part must vary continuously in $\eps$. 

For our example, in the disk $\D_{R}$ around the origin, we first choose a formal normal form, which must itself be deforming continuously in $\eps$:  
 
\begin{equation}\mathcal{A}_{n,\eps}= \frac{\Lambda_0(\eps) +....\Lambda_k(\eps)x^k}{p_\eps(x)}dx \label{normal_Form}\end{equation}

 Our continuous family $\mathcal{A}_\eps$ will have poles that are holomorphic conjugates of our normal form, and will have in addition  a
 finite piece, living in an appropriate function space, with both $(1,0)$ and $(0,1)$ components:
 \begin{equation}\mathcal{A}_\eps-f(x,\eps)\mathcal{A}_{n,\eps} f(x,\eps)^{-1}= a^{1,0} + a^{0,1},\label{gauge_f}\end{equation}
with $f(x,\eps)\in\mathrm{Mat}(n\times n,\C)$ invertible, holomorphic in $x$ and varying continously in $\eps$, and with $a^{1,0} + a^{0,1}$ lying in  a Sobolev space $W^{1,q}$ and varying continuously in $\eps$, as elements of the Sobolev space; note that this does not imply that the  $a^{1,0}$ and $a^{0,1}$ are themselves continuous. Thus, while the $(1,0)$ (holomorphic) term of $\mathcal{A}_\eps$ has singularities, the $(0,1)$ term $a^{0,1}$ is bounded and continuous, again as a family of elements $a^{0,1}(\eps)$ of $W^{1,q}$, with $1<q<2$.

\begin{remark}Some comments are in order as to the choice of space $W^ {1,q}$, a space of functions in $L^q$ with one $L^q$ derivative. One wants to solve for a gauge transformation, $g_\eps^{-1}\overline{\partial}g_\eps =-a^{0,1} $ which will take us to a holomorphic gauge, and we would like to have the solution be continuous, so that, for example applying the gauge transformation does not change the topology of the bundle; this suggests setting up our problem as finding an inverse image under a map $W^{2,q}\rightarrow W^{1,q}$, since one has the Sobolev embedding theorem $W^{2,q}\subset C^0$, for $q>1$; of course, over the subset where $a^{0,1}$ is smooth, elliptic regularity will tell us that $g_\eps$ is also smooth.\end{remark}

\begin{remark} While the notion of continuity allows for the gauge freedom given by $f$
in \eqref{gauge_f}, in our case, however, this freedom can be  normalized away by preparing $\mathcal{A}_\eps$ so that it has the same polar part as $\mathcal{A}_{n,\eps}$, and we only consider the equation with $f\equiv1$:
 \begin{equation}\mathcal{A}_\eps-\mathcal{A}_{n,\eps} = a^{1,0} + a^{0,1}.\label{gauge}\end{equation}\end{remark}

Now consider a family of connections in picture $A$ or $B$, i.e., given by Stokes data ($A$), or Stokes data shifted to the normal diagonal form ($B$). In picture $B$, explicitly, one first builds a bundle by glueing trivialised bundles on the sectors $\Omega_{j,\eps}^\pm\cap \D_R$ by symmetries  of the normal form coming from the Stokes matrices on intersections;  the connections are constructed by putting the normal form $\mathcal{A}_{n,\eps}$ on the sectors $\Omega_{j,\eps}^\pm\cap \D_R$, and patching together with a partition of unity, obtaining a $C^\infty$ connection on $\D_R$; initially this is on the complement of the singularities, but the asymptotics of the glueing functions are such that the bundles extend to   the singularities. More explicitly:

\begin{definition}\label{def:pair} (Definition of the pair (bundle, connexion $\mathcal{A}_\eps$)) Let us describe the bundle, with the transition functions  coming from the Stokes matrices; this is essentially picture B above.  Let 
 $F_{j,\eps}^\pm$ be the standard fundamental matrix solutions for the connection given by \eqref{normal_Form} on the $\Omega_{j,\eps}^\pm\cap \D_R$, which are analytic continuations of each other, except on $\Omega_{1,\eps}^U$.  Then, as in (\ref {Btriv1}), (\ref {Btriv2}), (\ref {Btriv3}),
the transition functions are of the form
\begin{equation}\begin{cases} \widetilde C_{j,\eps}^L= F_{j,\eps}^+C_{j,\eps}^L(F_{j,\eps}^-)^{-1}= \mathrm{D_{j,\eps}^L} + N_{j,\eps}^L,&x\in \Omega_{j,\eps,s}^L,\\
\widetilde C_{j,\eps}^U= F_{j-1,\eps}^-C_{j,\eps}^U(F_{j,\eps}^+)^{-1} =\mathrm{D_{j,\eps}^U} + N_{j,\eps}^U,&x\in \Omega_{j,\eps}^U, \\
\widetilde C_{j,\sigma(j),\eps}^G= F_{j,\eps}^+C_{j,\sigma(j),\eps}^G(F_{\sigma(j),\eps}^-)^{-1}= \mathrm{Id}, &x\in \Omega_{j,\sigma(j),\eps}^G,\end{cases}\label{gluing_sectors}\end{equation}

Here, the transition functions of \eqref{gluing_sectors}  are all of the form $D_\eps + N_\eps(x)$, with $D_\eps$ diagonal, constant in $x$ and analytic in $\eps$, and $N_\eps$ either strictly upper or lower triangular  and tending to zero at the singularity. Since $D_\eps + N_\eps(x)$ is an automorphism of $\mathcal{A}_{n,\eps}$ near the origin, it satisfies 
$$ (D_\eps + N_\eps(x)) \mathcal{A}_{n,\eps} (D_\eps + N_\eps(x))^{-1} + d N_\eps(x) (D_\eps + N_\eps(x))^{-1} = \mathcal{A}_{n,\eps},$$
(where $d$ is the derivative in the $x$ direction only),  whence 
$$dN_\eps(x) -  [\mathcal{A}_{n,\eps},N_\eps(x)]=0,$$
since $[D_\eps,\mathcal{A}_{n,\eps}]=0$.  This equation involves the formal, Abelian connection, and decouples each entry of the matrix $N$;  in consequence, the entries $n_{i,i'}$, $i<i'$ of  $N_\eps(x)$ (to fix ideas, in the upper triangular case, with ${\rm Re}(t)\rightarrow -\infty$), an upper triangular matrix with zero diagonal terms, satisfy
\begin{equation}n_{i,i'}(x,\eps)  \simeq c_{i,i'}\exp\left((\lambda_i(x_\ell) - \lambda_{i'}(x_\ell))t\right) ,\label{n_i}\end{equation} where 
$$\Lambda_0(\eps)+ \dots + \Lambda_k(\eps)x^k= \mathrm{diag} (\lambda_1(x,\eps), \dots, \lambda_n(x,\eps)).$$
 The asymptotic decay would be   an application of the theorem of Levinson given above (Theorem~ \ref{thm:CD}), but in the simpler scalar case, since the equations for each $n_{i,i'}$ are decoupled.

The important ingredient is that $\mathrm{Re} \left(\lambda_j(x_\ell) - \lambda_{j'}(x_\ell)\right)>0$ for $j<j'$. Thus, $\lambda_j(x_\ell) - \lambda_{j'}(x_\ell)= r_{j,j'}e^{i\phi_{j,j'}}$ with $\phi_{j,j'}\in \left(-\frac{\pi}2, \frac{\pi}2\right)$. Let us suppose that we consider $t$ along a line of slope $\theta\in (-\frac{\pi}2,\frac{\pi}2)$, i.e $t= e^{i\theta}t'$ with $t'\in \R^-$ (since $x_\ell$ is of $\alpha$-type in the upper triangular case). Then 
$$\exp\left((\lambda_j(x_\ell) - \lambda_{j'}(x_\ell))t\right)= \exp\left(r_{j,j'}e^{i(\phi_{j,j'}+\theta)}t'\right),$$
and 
\begin{equation}|n_{j,j'}(x,\eps)|\leq C(\eps) \exp(-\beta |t|), \label{bound_n}\end{equation}
for some $\beta>0$ provided that $|\theta|<\delta$ with $\delta>0$ sufficiently small so that $\phi_{j,j'}+\theta\in (-\frac{\pi}2,\frac{\pi}2)$. It suffices to take $\beta = \min_{j<j'} \left(r_{j,j'} \cos(\phi_{j,j'}+\theta)\right)$.  (The number $\delta$ appearing here was the limit of the slope for the strips in $t$-space in \cite{HLR}.)  A similar estimate holds for the entries of $N_\eps(x)$ in the lower diagonal case when $\mathrm{Re}(t)>0$. This gives transition functions which tend to constant diagonal matrices in the limit, and so the bundle extends to the punctures.

We now can build the connection. On each sector, we have the  normal form connection $\mathcal{A}_{n,\eps}$.  Let $h_\eps$ be a smooth function, which is zero on $\Omega_\alpha\setminus\Omega_\beta$, $1$ on $\Omega_\beta\setminus\Omega_\alpha$, and takes values in the interval $[0,1]$ on the intersection. We then set the connection on $\Omega_\alpha\cup \Omega_\beta$ to be given by the gauge transformation on the overlaps:
\begin{equation}\mathcal{A}_\eps = \begin{cases} (D_\eps + h_\eps(x)N_\eps(x) )\mathcal{A}_{n,\eps} (D_\eps + h_\eps(x)N_\eps(x) )^{-1} &\\ \qquad + d (h_\eps(x)N_\eps(x)) (D_\eps + h_\eps(x)N_\eps(x))^{-1},& \text{on}\:\; \Omega_\alpha\\
 A_{n,\eps}, &\text{on}\:\; \Omega_\beta\setminus \Omega_\alpha.\end{cases}\label{def:connection_A}\end{equation}
Note that $\mathcal{A}_\eps= \mathcal{A}_{n,\eps}$ on $(\Omega_\alpha \setminus\Omega_\beta) \cup(\Omega_\beta \setminus\Omega_\alpha)$, and that the connection is everywhere flat.
Iterating the process for all intersection sectors allows defining $\mathcal{A}_\eps$ globally on $\D_R $, with the caveat that there might be some singularities in the connection at the singular points, a problem we will discuss below. 
\end{definition}

\begin{proposition}\label{prop:continuity} Let the Stokes data $C_{j,\eps}^U, C_{j,\eps}^L$ vary analytically in the intersection of the DS domain $S_s$ with the polydisk $\D_\rho$ in $\eps$-space, with a continuous limit at the boundary of the DS domain, near the points at which some of  the singularities coincide (i.e. $\Delta(\eps)=0$).  Then, choosing the functions $h_\eps$ appropriately, the pair of Definition~\ref{def:pair} defines a continuous family of pairs (bundle, connection) on $\D_R$ in the sense given above, that is as a map of $S_s\cap \D_\rho$ into $W^{1,q}$.\end{proposition}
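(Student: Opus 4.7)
The plan is to establish, in turn: (a) that $\mathcal{A}_\eps$ has the same polar part as the chosen normal form $\mathcal{A}_{n,\eps}$ (so that $f\equiv I$ in \eqref{gauge} suffices); (b) that $\mathcal{A}_\eps - \mathcal{A}_{n,\eps}$ lies in $W^{1,q}$ for each $\eps$; and (c) that this difference varies continuously in $\eps$ as an element of $W^{1,q}$. The key ingredient is a coherent choice of the cutoff functions $h_\eps$: I would take them as the pullbacks of a single fixed smooth transition function in the $t$-coordinate on each intersection sector's strip, via the $\eps$-dependent description of Section~\ref{sec:enlarging}. Because the (slanted) strips have a $t$-width bounded below independently of $\eps$ (coming from the enlargement), this gives $h_\eps$ and $dh_\eps$ uniformly bounded in $L^\infty(\D_R)$, depending continuously on $\eps$ up to the boundary of $S_s \cap \D_\rho$.

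For (a) and (b): on each intersection sector $\Omega_\alpha \cap \Omega_\beta$, the formula \eqref{def:connection_A} gives
\[
\mathcal{A}_\eps - \mathcal{A}_{n,\eps} = \bigl[(D_\eps + h_\eps N_\eps)\,\mathcal{A}_{n,\eps}\,(D_\eps + h_\eps N_\eps)^{-1} - \mathcal{A}_{n,\eps}\bigr] + d(h_\eps N_\eps)(D_\eps + h_\eps N_\eps)^{-1}.
\]
The estimate \eqref{bound_n} says $|N_\eps(x)| \le C\exp(-\beta|t|)$ as $t \to \pm\infty$ into an adjacent singular point $x_*$, and an analogous bound holds for $dN_\eps$. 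For simple zeroes of $p_\eps$, $t$ tends to infinity like $(1/\lambda_*)\log(x-x_*)$, so this translates to a power-law decay $|N_\eps| = O(|x-x_*|^{\beta/|\lambda_*|})$ in $x$; at $\eps=0$ it is super-polynomial. For $\eps$ small the exponent is much larger than $k+1$ (since $|\lambda_*| \sim \pl\eps\pl^k$ while $\beta$ stays bounded away from $0$), so $h_\eps N_\eps\,\mathcal{A}_{n,\eps}$ vanishes at $x_*$, confirming (a). All the remaining terms are bounded with bounded derivatives on $\D_R$, so $\mathcal{A}_\eps - \mathcal{A}_{n,\eps} \in W^{1,\infty}(\D_R) \subset W^{1,q}(\D_R)$.

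For (c) on the interior of $S_s \cap \D_\rho$: the normal-form fundamental solutions $F_{j,\eps}^\pm$, the transition functions $\widetilde C_{j,\eps}^{U,L}$ built from the analytic Stokes data as in Definition~\ref{def:pair}, and the matrices $N_\eps(x)$ all vary analytically in $\eps$, uniformly on compact subsets of intersection sectors. Combined with the continuous dependence of $h_\eps$ on $\eps$, this yields continuity of $\mathcal{A}_\eps - \mathcal{A}_{n,\eps}$ in $W^{1,q}$ on compact subsets of the interior.

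The main obstacle is continuity as $\eps$ approaches a boundary point of $S_s \cap \D_\rho$ lying on $\{\Delta(\eps)=0\}$: two simple poles of $\mathcal{A}_{n,\eps}$ merge into a multiple pole, so the reference normal form itself deforms, and the intersection sector containing the colliding pair degenerates in $x$-space (even while its $t$-width is preserved). The standing hypothesis of continuous extension of the Stokes data at such points, together with Levinson's theorem (Theorem~\ref{thm:CD}) applied to the merging eigenstructure of $\Lambda_0(\eps) + \Lambda_1(\eps)x + \cdots + \Lambda_k(\eps)x^k$, provides continuous limits of the $\widetilde C_{j,\eps}^{U,L}$ and of $N_\eps$ uniformly on compact subsets of the limiting strip, and a uniform positive lower bound on the decay rates $\beta$ (controlled by the eigenvalue gap of $\Lambda_0(\eps)$, which stays bounded away from zero). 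The choice $1<q<2$ is just strong enough to absorb the measure-zero reshuffling of the strip boundaries in $x$-space as $\eps$ crosses $\{\Delta=0\}$, yielding continuity in $W^{1,q}$ up to the boundary.
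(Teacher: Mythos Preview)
Your overall plan matches the paper's: reduce $\mathcal{A}_\eps - \mathcal{A}_{n,\eps}$ to a term essentially of the form $N_\eps \, dh_\eps$ supported on the intersection strips, choose $h_\eps$ via the $t$-coordinate, and exploit the exponential decay \eqref{bound_n} of $N_\eps$ in $t$. There are two places where the argument falls short.

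First, the claim that $dh_\eps$ is uniformly bounded in $L^\infty(\D_R)$ is false as stated. In $x$-coordinates one has $dh_\eps = (\partial h/\partial t)\,dx/p_\eps(x)$ plus its conjugate, and the coefficient $1/p_\eps(x)$ blows up at each singular point (the intersection sectors spiral all the way in). What is true, and what the paper uses, is that the \emph{product} $N_\eps\,dh_\eps$ is controlled because the exponential decay $\exp(-\beta|t|)$ of $N_\eps$ beats the growth of $1/p_\eps$; your subsequent power-law estimate $|N_\eps| = O(|x-x_*|^{\beta/|\lambda_*|})$ with $\beta/|\lambda_*|\gg k+1$ is the right mechanism. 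So this is a misstatement rather than a fatal error, but it must be corrected: the $W^{1,\infty}$ membership you claim comes from the product, not from $dh_\eps$ alone.

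Second, and more seriously, your treatment of continuity at $\{\Delta(\eps)=0\}$ is not a proof. Saying that ``$1<q<2$ is just strong enough to absorb the measure-zero reshuffling'' does not establish $W^{1,q}$-continuity; you need a \emph{uniform} tail bound so that convergence on compact subsets of the strips upgrades to convergence in $W^{1,q}$. The paper makes this quantitative by pulling the norms back to $t$-space with Jacobian $|p_\eps(x)|^2\,dt\wedge d\bar t$: the $L^q$-norm of $N_\eps\,dh_\eps$ becomes $\int_V \exp(-q\beta|t|)\,|p_\eps(x)|^{2-q}\,dt\wedge d\bar t$, uniformly bounded since $2-q>0$ and $|p_\eps|$ is bounded above; but the $L^q$-norm of its $x$-derivative becomes $\int_V \exp(-q\beta|t|)\,|p_\eps(x)|^{2-2q}\,dt\wedge d\bar t$, with a \emph{negative} power of $|p_\eps|$. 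Bounding this uniformly in $\eps$ requires a uniform lower bound $|p_\eps(x(t,\eps))|\ge K'\exp(-(k+1)a|t|)$ with $a>0$ small, and the paper supplies exactly this via a short lemma (integrate ${\rm Re}(p_\eps(x)/(x-x_\ell))>-a$ along the flow). Your pointwise $W^{1,\infty}$ bound for each fixed small $\eps$ does not deliver this uniformity, and Levinson's theorem is not the relevant tool here: the entries of $N_\eps$ already satisfy decoupled scalar equations with explicit solutions \eqref{n_i}; the missing ingredient is control of the Jacobian of $t\mapsto x$, not of the linear ODE in $y$.
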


\begin{proof} 
For the visualization of the sectors in the $t$-coordinate we refer to Figures~\ref{strip_horizontal}, \ref{strip_horizontal_double}, \ref{strip_hor_slant_double}, \ref{slant} and \ref{x_t_space}.

Our Stokes data is defined on a family of bundles over the  disk $\D_R$ in $\C$. One now wants to check that the result, when converted to \lq\lq picture $D$\rq\rq, gives a  continuous family. This is done by passing from trivializations defined on our various open sets to a common trivialization, using smooth  but non analytic functions on the overlaps of the open sets. As noted above, within the DS domain, everything (the Stokes matrices, and the open sets themselves) is varying smoothly, and obtaining continuity poses no problem, and indeed is quite classical.

The problems occur when one approaches the discriminantal locus $\Delta(\eps)= 0$, and indeed, only then in a neighbourhood of the singular points.  Note that $\Delta=0$ is invariant under \eqref{parameter_conic}. Near a regular point of $\{\Delta=0\}$, we can reparameterize $\eps\mapsto (\eta,\eta')$, with $\eta=\Delta$.

 We have  glued all our local definitions together in a single patch as described in Definition~\ref{def:pair}. One then considers $\mathcal{A}_\eps-\mathcal{A}_{n,\eps}$, referring to  \eqref{def:connection_A}. We first want to estimate its $L^q$-norm.  Since $(D_\eps + h_\eps(x) N_\eps(x))^{-1}$ is of the order of $1$, this is tantamount to estimating the norm of $(\mathcal{A}_\eps-\mathcal{A}_{n,\eps})(D_\eps + h_\eps(x) N_\eps(x))$, which is equal to 
\begin{align*} (D_\eps + h_\eps(x) N_\eps(x)) \mathcal{A}_{n,\eps}   &- \mathcal{A}_{n,\eps} (D_\eps + h_\eps(x) N_\eps(x))  - d (h_\eps(x)N_\eps(x))  \\
&= [h_\eps(x) N_\eps(x), \mathcal{A}_{n,\eps}]     -d (h_\eps(x)N_\eps(x)) ,\end{align*}
 and so, given that $d N_\eps(x) + [\mathcal{A}_{n,\eps},N_\eps(x)]=0$, the norm of the quantity 
$$N_\eps (x)d (h_\eps(x)).$$
Thus the behaviour of our chosen $h_\eps$ as we approach the boundary is quite crucial. This is where the $t$-uniformisation of the plane comes into play. Under this uniformisation, the sectors $\Omega_{j,\eps}^\pm$  have a horizontal part which, together with the width of the sector,  becomes large as $\eps$ tends to zero.  We can of course manage that the intersection of two adjacent sectors be a strip $V$ of uniform vertical width $1$.  When we approach a boundary point $\eps'$ of $S_s$ belonging to $\{\Delta=0\}$, the same occurs:  some sectors $\Omega_{j,\eps}^\pm$ (not all, only the ones attached to a multiple point) become large as well as the horizontal part when $\eps\to \eps'$.   Hence, we can use a function $h_\eps$ in the $t$ plane, which just depends on the imaginary part of $t$, and goes smoothly from zero to one from one side of the intersection strip $V$ to the other, so that $dh_\eps$ is supported on $V$. We then have that $dh_\eps$ is of  the order of one, and we want to estimate the norm $ N_\eps(x)$ on the strip $V$ of vertical width one, going to infinity. This is, however, already done; from $d N_\eps(x) + [\mathcal{A}_{n,\eps},N_\eps(x)]=0$, as we noted above, but now passing to the  $t$-parametrization, the entries of $N_\eps$ are of order $\exp(-\alpha|\mathrm{Re}(t)|)$, as noted in \eqref{bound_n}. One then finds, taking into account the change of variable, that the quantity one wants to bound for the $L^q$-norm is then
\begin{equation}C(\eps)\int_V \exp(-q\alpha |t|) |p_\eps(x)|^{2-q}dt\wedge d\overline{t},\end{equation}
which indeed remains bounded, independently of $\eps$, if $1<q<2$, as $|p_\eps(x)|$ is bounded near $t=\infty$.  Similarly, for the $L^q$-norm of the derivative, one has that $\frac{d}{dt}(dh_\eps)$ is also bounded, and so the derivative $\frac{d}{dx}$ is of the order of $p_\eps^{-1}$. The quantity one wants is then
\begin{equation}C(\eps)\int_V \exp(-q\alpha |t|)|p_\eps(x)|^{2-2q} dt\wedge d\overline{t},\label{der}\end{equation} which requires a bit more work to bound. To fix ideas, we can consider the linearized equation for $x(t) $ at a singular (say attractive) point, which is $\dot x = p'_\eps(x_\ell) x$, giving a solution $x(t) = \exp (p'_\eps(x_\ell) t)$, with ${\rm Re}( p'_\eps(x_\ell))<0$, and so in the linear approximation $|p_\eps(x)| \simeq |p'_\eps(x_\ell)| \exp (p'_\eps(x_\ell) t)$, and the integrand one is looking to bound is $ |p'_\eps(x_\ell)|^{2-2q}\cdot \exp((-q\alpha + (2q-2)p'_\eps(x_\ell) )|t|)$. Now, as $\eps$ gets small, so do the derivatives $p'_\eps(x_\ell)$ at the zeroes of $p$, and so for $\eps$ small we get an exponentially decreasing integrand. Likewise, at a multiple zero,  the corresponding approximation gives $x(t) = t^{\gamma}$, for some $\gamma$ and so the integral also converges. This argument, of course, is only heuristic; for example, there is no bound on the  constant $C(\eps)$ as $\eps$ approaches the discriminant locus. Nevertheless, it indicates that some estimate is possible, and indeed, it is provided by the following lemma, which gives a uniform bound in $\eps$ over open sets.

\begin{lemma} Let $x_\ell(\eps)$ be a zero of $p_\eps(x)$. Suppose that there exist $a,c>0$ such that 
$${\rm Re} \left(\frac{p_\eps(x)}{(x-x_\ell(\eps))}\right) >-a, \quad {\mathrm and }\quad |p_\eps(x)|> c|x-x_\ell(\eps)|^{k+1}$$
for $|x-x_\ell(\eps)|<\delta,$ and $\eps\in \D.$
 Let $x(t,\eps)$ satisfy 
$$\dot x(t,\eps) = p_\eps(x(t, \eps)),$$  and let it converge to $x_\ell(\eps)$ as ${\rm Re}(t)\rightarrow +\infty$ in a strip. Then there exist constants $K,K'>0$, with 
$|x(t,\eps)-x_\ell(\eps)|> K \exp(-at)$ and  $p_\eps(x(t, \eps))> K' \exp(-(k+1)at)$ for $t>t(\eps)$.

Similar estimates apply in the case of ${\rm Re}\rightarrow -\infty$.

\end{lemma}
 \begin{proof} The first constraint gives  
 $$\frac{d}{dt}({\rm Re}(\ln(x-x_\ell))) = {\rm Re} \left(\frac{p_\eps(x)}{(x-x_\ell(\eps))}\right)>-a,$$
and so, integrating, ${\rm Re}(\ln(x-x_\ell))>C-at$, and $|x-x_\ell|>K \exp (-at)$, whence $|p_\eps(x(t, \eps))|> K' \exp(-(k+1)at)$.\end{proof}

Returning to the theorem, for $\eps$ sufficiently small one can make the bound $a$ of the lemma sufficiently small for the estimate (\ref{der}) to converge. We then note that, on finite strips, all of our constructions are continuous in $\eps$; this, plus the exponential convergence, gives us a continuous family in $\eps$ in  the function space $W^ {1,q}$.
\end{proof}

From this  continuous family, in $W^ {1,q}$ (picture $D$)  on $S_s$,   one can pass to a holomorphic description, in the neighbourhood of the origin, of the connection in a single holomorphic trivialization. One can proceed as in the paper of Atiyah and Bott \cite{AB}, p.555. We note that starting from picture $D$, the aim is to find a gauge transformation $g_\eps$ solving 
\begin{equation}g_\eps^{-1}\overline{\partial}g_\eps =-a^{0,1} .\label{eq:g}\end{equation}   We know that there
 exists a gauge transformation $u $ to a holomorphic gauge for $\eps=0$. Gauge transforming the whole family with $u$, we can assume that we are deforming  from  a holomorphic trivialization at  $\eps=0$, so that $a^{0,1}|_{\eps=0}= 0$; in short, we are solving, as $\eps$ varies in a small set, for  $a^{0,1}$ in a neighbourhood of the origin in $W^ {1,q}$.   Viewing 
\begin{equation}g_\eps\mapsto g_\eps^{-1}\overline{\partial}g_\eps\label{map:g}\end{equation} 
as a map of Sobolev spaces $W^ {2,q}\rightarrow W^ {1,q}$ on a disk around the origin,  one wants to appeal to the implicit or inverse function theorem on Banach spaces. To do this, we compactify, so that the map on function spaces over $\CP^1$ is Fredholm.  We first extend the bundle trivially over $\CP^1$. We can of course suppose that $\eps$ is sufficiently small so that the singularities all lie within $\{|x|<\frac{R}2\}$. We use a $C^\infty$ function with bounded derivative
$$f(|x|)= \begin{cases} 1,&|x|<\frac{R}2,\\0,&|x|>R.\end{cases}$$ and we extend $\mathcal{A}_\eps$ and $\mathcal{A}_{n,\eps}$ to  $f(|x|)\mathcal{A}_\eps$ and $f(|x|)\mathcal{A}_{n,\eps}$ respectively, on a family of trivial bundles $\{E_\eps\}$ on $\CP^1$. 

The linearization at $g=\rm{Id}$ is the family in $\eps$ of elliptic Dolbeault complexes
$g_\eps\mapsto \overline{\partial}g_\eps,$
mapping the sections of a trivial bundle to the sections of the tensor product of the same trivial bundle with the line bundle of $(0,1)$ forms. Its  kernel is the family of global holomorphic sections $H^0(\CP^1, \mathrm{End}(E_\eps)) = gl(n,\C)$, and its     cokernel  is the first Dolbeault cohomology group $H^1(\CP^1, \mathrm{End}(E_\eps))=0$ (see, e.g. \cite{GH}, chap. 0), and so the Fredholm map \eqref{map:g} is locally surjective near the identity as a map from the Sobolev space $W^ {2,q}$ of sections of $\mathrm{Aut}(E_\eps)$ with two $L^q$ derivatives to the space $W^ {1,q}$ of sections of $(0,1)$-forms with values in $\mathrm{End}(E_\eps)$ and one $L^q$ derivative (here $q>1$). Hence, the map \eqref{map:g} is surjective, and its kernel is given by the constant sections.  Asking for example  that $g_\eps$ be orthogonal to the constants gives, by the inverse function theorem on Banach spaces, (e.g. Lang \cite{Lang}, p. 15) a unique solution $g_\eps$ for \eqref{eq:g} restricted   to a suitably small open set in $\eps$-space. The details follow Atiyah and Bott \cite{AB}.

Transforming $\mathcal{A}_\eps$ with $g_\eps$ we obtain a family of connections $\nabla_\eps=g_\eps\mathcal{A}_\eps g_\eps^{-1}+ d g_\eps \; g_\eps^{-1}$. The $(0,1)$ part  of $\nabla_\eps$, namely 
$(g_\eps a^{0,1}+ \overline\partial g_\eps)g_\eps^{-1} $, vanishes by the construction of $g_\eps$ as solution of \eqref{eq:g}. Since $\nabla_\eps$ has been obtained from a flat connection by gauge transformations, it is flat. Its flatness then ensures that it is holomorphic. 

Hence, we obtain a family of connections  on a disk $\D_{r}$ depending analytically on $\eps\in S_s$ with continuous limit at points of the closure of $S_s$ lying on $\{\Delta=0\}$ (this includes $\eps=0$).  

\subsection{Connections on $\CP^1$ over  a DS domain}\label{sec:Sectoral_domain} The previous section gave us a family of bundles in picture C over the product of a disk $\D_R$ in $\C$ and a closure of $S_s\cap \D_\rho$, with a singular connection in the disk direction. (Though we used an extension to $\CP^1$ in the previous lemma, this is not the extension we will use  here. ) 

Indeed, we proceed as for the bundle at $\eps= 0$ and glue in two Fuchsian singularities; the result will be a deformation of our normal form for $\eps = 0$.
 The  deformation will then also be irreducible, and the underlying bundle will be trivial.  The normalisation extends to these deformations also. Writing out the connection in a global trivialization, we will then define the analytic normal form for our singular rank $k$ systems; we suppose that we have already produced our extension  of the connection to $\CP^1$ at $\eps= 0$, as in  (\ref {thm:eps_0}), and so have a monodromy at infinity $M_\infty$ and  residue at infinity $\widehat A_\infty$:
 
 \bigskip
 
\begin{theorem}\label{thm:sectoral} Suppose  given \begin{itemize}
\item a DS domain $S_s$, 
\item formal invariants given by diagonal matrices $\Lambda_0(\eps), \dots \Lambda_k(\eps)$ depending analytically on $\eps$ in the intersection of a polydisk $\D_\rho$ with the closure of $S_s$,
such that $\Lambda_0(0)$ has distinct eigenvalues satisfying \eqref{order_eigenvalues}, determining a formal normal form \eqref{normal_form_eps},
\item    collections of normalized invertible (Stokes) upper (resp. lower) triangular matrices $C_{j,\eps}^U$, (resp. $C_{j,\eps}^L$), $j=1, \dots, k$, depending analytically on $\eps\in\D_\rho\cap S_s$, with continuous limit at the boundary points, and determining a monodromy
\begin{equation}M(\eps)= (C_{k,\eps}^L)^{-1}\cdot (C_{k,\eps}^U)^{-1}\cdot \dots \cdot(C_{1,\eps}^L)^{-1}\cdot (C_{1,\eps}^U)^{-1},\label{def:M_eps}\end{equation}
\item   a generic matrix  $M_\infty$ in a neighbourhood of the identity with distinct eigenvalues,  representing a conjugacy class  with distinct eigenvalues, with all entries nonzero, close to the identity,
 and such that $M_\infty\cdot M(0)^{-1}$ has distinct eigenvalues,
\item   conjugates \begin{equation} M_\infty(\eps)= \Gamma(\eps)M_\infty \Gamma(\eps)^{-1}\label{Choice_M_infty0}\end{equation} with non-zero entries  for $\eps$ in a small neighbourhood of $0$  for some suitable analytic function $\Gamma:S_s\rightarrow G
(n,\C)$ with continuous limit at the boundary points, and such that $\Gamma(0)=\mathrm{id}$.
\end{itemize}
Then, restricting $\rho$ if necessary,  there exists a family of irreducible rational linear differential systems 
\begin{equation} y' = \left(\frac {A_0(\eps) + A_1(\eps)x +...+ A_k(\eps) x^k}{p_\eps(x)} + \frac{\widehat A(\eps)}{x-R}\right)\cdot y = B(x,\eps)\cdot y, \label{extra-pole}\end{equation}
   depending analytically on $\eps\in S_s$  with continuous limit at $\Delta(\eps)=0$ (of the form \eqref{nabla-global}  for $\eps=0$) , and with
 \begin{itemize}
\item formal normal form at the origin \eqref{normal_form_eps},
defined for $\eps\in S_s\cap \D_\rho$,
\item generalized Stokes matrices   which are given by the matrices $C_{j,\eps}^U$ and $C_{j,\eps}^L$, $j=1, \dots, k$,
\item and monodromies $M_R(\eps)$, $M_\infty(\eps)$ around $\ell_R, \ell_\infty$  such that 
$M_R(\eps)M_\infty(\eps)= M(\eps)$, with $M(\eps)$ defined in \eqref{def:M_eps}. \end{itemize}

The automorphisms of the system are multiples of the identity.  Furthermore, acting by an automorphism of the formal normal form (a diagonal   matrix, constant in $x$), which changes   both the Stokes data and our rational system, it is possible to normalize the coefficients of $n-1$ non diagonal entries of   suitable off-diagonal entries of  $B(x,\eps)$ 
%(or of $M_\infty(\eps)$) 
to $1$, thus leading to a unique normalization of \eqref{nabla-global} for each equivalence class of the Stokes data under automorphisms of the formal normal form.
 \end{theorem}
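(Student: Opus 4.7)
The plan is to assemble the three ingredients already developed in the excerpt: the realisation at $\eps=0$ (Theorem~\ref{thm:eps_0}), the continuous family of connections in $W^{1,q}$ on the disk $\D_R$ (Proposition~\ref{prop:continuity}), and the $\overline\partial$-argument of Section~\ref{sec:deforming} that passes from picture $D$ to a holomorphic (picture $C$) family. First, apply Theorem~\ref{thm:eps_0} to the Stokes data at $\eps=0$ together with the auxiliary monodromy $M_\infty$; this yields an irreducible rational system \eqref{nabla-global} on $\CP^1$ with an irregular singularity of rank $k$ at $0$ and Fuchsian singularities at $R$ and $\infty$, trivial underlying bundle, and the prescribed monodromies. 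This is the starting point about which we will deform.

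Next, I use Proposition~\ref{prop:continuity} to build, from the Stokes data $\{C_{j,\eps}^U, C_{j,\eps}^L\}$ and the formal normal form, a continuous family $\mathcal{A}_\eps$ of pairs (bundle, connection) on $\D_R$ parametrised by $\eps\in S_s\cap \D_\rho$, considered in $W^{1,q}$. Following the argument of Section~\ref{sec:deforming}, I extend this trivially to $\CP^1$ with a cut-off, and solve the $\overline\partial$-equation $g_\eps^{-1}\overline\partial g_\eps = -a^{0,1}$ via the implicit function theorem in Sobolev spaces, using that $H^1(\CP^1,\mathrm{End}(E_\eps))=0$ for the trivial bundle; up to shrinking $\rho$ this yields an analytic (in $\eps$) gauge $g_\eps$, hence a family of holomorphic connections on $\D_r$ depending analytically on $\eps\in S_s$ with continuous limit on $\partial S_s\cap\{\Delta(\eps)=0\}$ (including $\eps=0$). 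This realises the prescribed Stokes data over a neighbourhood of the origin in $x$-space.

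To globalise on $\CP^1$, I glue in two Fuchsian pieces over the annulus $\A_R$ and the disk $\Omega_\infty$ as in the proof of Theorem~\ref{thm:eps_0}, but now in a parametric way: I build an analytic family of holomorphic connections on $\A_R$ with a Fuchsian singularity at $x=R$, whose monodromy around $R$ is the matrix $M_R(\eps):=M(\eps)\,M_\infty(\eps)^{-1}$ (so that the product factorises $M(\eps)=M_R(\eps)M_\infty(\eps)$), and whose monodromy around the outer boundary is $M_\infty(\eps)=\Gamma(\eps)M_\infty\Gamma(\eps)^{-1}$; Lemma~\ref{Lemma:decomposition} lets me realise such factorisations analytically in $\eps$, after possibly shrinking $\rho$. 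By construction this glues continuously to the disk realisation built in the previous paragraph, and to the trivial connection on $\Omega_\infty$ conjugated by $\Gamma(\eps)$. Since the underlying bundle at $\eps=0$ is trivial and triviality is open, restricting $\rho$ further keeps the family of bundles trivial on $\CP^1$; choosing a global trivialisation extending the one at $\eps=0$ writes the connection in the form \eqref{extra-pole} with $A_i(\eps),\widehat A(\eps)$ depending analytically on $\eps\in S_s\cap\D_\rho$.

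Finally, the hypotheses on $M_\infty(\eps)$ (nonzero entries in the picture-$B$ trivialisation, close to the identity) let me apply Lemma~\ref{normalization_irreducible} to conclude that the family is irreducible for $\eps$ small, hence in particular indecomposable; then Lemma~\ref{normalization_indecomposable} supplies the unique normalisation of $n-1$ off-diagonal monomials in $B(x,\eps)$ and shows that the remaining automorphism group is reduced to the scalars. The main obstacle is the analytic dependence in $\eps$ uniformly on $S_s\cap\D_\rho$: the $\overline\partial$-step of Section~\ref{sec:deforming} only directly gives continuity in $\eps\in W^{1,q}$, but the Fredholm setup together with flatness of the resulting connection (which forces the gauged $(0,1)$-part to vanish and hence the result to be holomorphic in both $x$ and $\eps$) upgrades continuity to analyticity in $\eps$; handling the behaviour as $\eps$ approaches $\{\Delta=0\}$ where sectors $\Omega_{j,\eps}^\pm$ blow up in the $t$-coordinate is what makes the exponential decay estimate \eqref{bound_n} and the slanting of strips in Section~\ref{sec:enlarging} essential.
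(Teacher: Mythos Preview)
Your overall strategy matches the paper's: use Section~\ref{sec:deforming} to produce the holomorphic family on a disk, then glue in the Fuchsian data over the exterior, invoke openness of bundle triviality, and normalise via Lemmas~\ref{normalization_irreducible} and~\ref{normalization_indecomposable}. The paper's own proof is essentially a two-sentence reference to this same machinery, gluing a single family of bundles on $\D_\infty=\{|x|>R/2\}$ carrying both Fuchsian singularities at $R$ and $\infty$ with the prescribed monodromies $M_R(\eps)$ and $M_\infty(\eps)$.

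There is, however, a genuine slip in your globalisation step. You write that the annulus piece has monodromy $M_\infty(\eps)$ around its outer boundary and then glue it to ``the trivial connection on $\Omega_\infty$ conjugated by $\Gamma(\eps)$''. The trivial connection $d$, conjugated by a matrix constant in $x$, is still $d$ and has trivial monodromy around the circle $|x|=2R$; this is incompatible with the outer monodromy $M_\infty(\eps)\neq\mathrm{Id}$ you have just imposed, so the gluing does not make sense as stated. What is needed on $\Omega_\infty$ is a connection with a Fuchsian pole at $\infty$ and monodromy $M_\infty(\eps)$ (for instance $d - \widehat A_\infty(\eps)\,d(1/x)/(1/x)$ for a suitable residue), or equivalently, as the paper does, a single family over $\D_\infty$ with two Fuchsian points $R,\infty$ realising the pair $(M_R(\eps),M_\infty(\eps))$. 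Relatedly, your appeal to Lemma~\ref{Lemma:decomposition} is superfluous here: the factorisation $M(\eps)=M_R(\eps)M_\infty(\eps)$ is not something you need to \emph{produce}, since $M_\infty(\eps)$ is given by hypothesis \eqref{Choice_M_infty0} and $M_R(\eps)$ is then defined; the distinct-eigenvalue conditions on both factors are already in the hypotheses (on $M_\infty$ and on $M_\infty\cdot M(0)^{-1}$). Once you replace the trivial connection at infinity by the correct Fuchsian one, the rest of your argument goes through and coincides with the paper's.
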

\begin{proof} 
We have already constructed a family of bundles with connection $(E_\eps, \nabla_\eps)$ over $\D_R$, which is a continuous deformation of  $(E_0, \nabla_0)$;  we glue to this, as for $\eps=0$, the bundle on the disk $\D_\infty=\{|x|> R/2\}$ with two Fuchsian singularities at $R, \infty$ with monodromies.  This is done by choosing a continuous family of  bundles over $\D_\infty\times (S_s\cap \D_\rho)$, trivialized at $x= 3R/4$, with a connection with Fuchsian singularities at $x= R,\infty$, monodromy $M(\eps)$  around the circle of radius $3R/4$ defined  in \eqref{def:M_eps},
  and monodromy around infinity given by   ${M}_{\infty}(\eps)$.
  The bundles with connection over $\D_R$ and $\D_\infty$ are then glued in the standard way, starting from the base point.
This gives the desired global bundle with connection, deforming the case $\eps= 0$. Since a small deformation of a trivial bundle is trivial, we can pass to a global trivialization. We then get a connection which has   the form (\ref{extra-pole})  above. As noted, for this connection, after diagonalizing the leading term, we can then normalize to $1$ the same   $(n-1)$ non diagonal terms as in the case $\eps=0$.   \end{proof}

Our Stokes matrices for a connection with Fuchsian singularities depend on the DS domain, as the matrices depend on the way that the singular points are tied to infinity via the separatrices; however, there is one invariant that does not depend on this data, and that is the monodromy representation. The importance of the monodromy representation is that it essentially determines a connection with poles of order one, up to some discrete choices of the polar parts of the connection: basically, the eigenvalues of the monodromy determine the eigenvalues of the residue matrices at the Fuchsian singularities up to integers. 

\begin{proposition} Suppose that two pairs (trivialized bundle, connection with Fuchsian singularities) on $\CP^1$ have, in our context
\begin{itemize}
\item the same singularities, given by the zeroes of $p_\eps(x)$,as well as $R$, $\infty$;
\item the same formal invariants $\Lambda_0(\eps), \dots \Lambda_k(\eps)$;
\item the same conjugacy class of residues at $R$ and  $\infty$, given by   the classes of $\widehat A_R(\eps), \widehat A_\infty(\eps)$, with distinct eigenvalues;
\item fixing the base point, the same monodromy representations, (the same, not simply conjugate), with distinct eigenvalues around $R$, $\infty$.  (Note here that the trivializations  on the fiber above the base point are given by the trivialized bundles, and so the monodromy map is uniquely defined.) \end{itemize}
Then they are isomorphic.
\end{proposition}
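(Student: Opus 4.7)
The approach is the classical Riemann--Hilbert uniqueness argument adapted to this setting: since both connections live on a trivialised bundle and share their monodromy representations at a common base point, I will construct a global gauge equivalence between them as the ratio of their fundamental matrix solutions, and show that it extends to a global automorphism of the trivial bundle on $\CP^1$, hence by Liouville to a constant, which must be the identity.  Concretely, using the fixed trivialisations pick fundamental matrix solutions $X_1,X_2$ on the universal cover of $\CP^1\setminus\Sigma$ (with $\Sigma$ the singular set) normalised by $X_1(x_b)=X_2(x_b)=\mathrm{Id}$, and set
$$g(x):=X_1(x)\,X_2(x)^{-1}.$$
Analytic continuation along any loop $\gamma$ based at $x_b$ sends each $X_j$ to $X_j\cdot M_\gamma$ with the \emph{same} matrix $M_\gamma$, by hypothesis, so $g$ is single-valued and holomorphic on $\CP^1\setminus\Sigma$.

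The main step is to show that $g$ extends holomorphically and invertibly across each Fuchsian singularity $x_i\in\Sigma$.  At such an $x_i$ the residue of each connection matrix has the same conjugacy class with diagonal representative $\Lambda_i$, nonresonant in the sense that $\exp(2\pi i\Lambda_i)$ has distinct eigenvalues: at $R,\infty$ this follows directly from the hypothesis, while at a zero $x_\ell$ of $p_\eps$ the formal normal form forces the residue to be the diagonal matrix $(\Lambda_0(\eps)+\dots+\Lambda_k(\eps)x_\ell^k)/p_\eps'(x_\ell)$, whose eigenvalue differences blow up as $p_\eps'(x_\ell)\to 0$ and hence avoid $\Z$ for $\eps$ small.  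The classical local canonical form of a nonresonant Fuchsian singularity then gives, near $x_i$,
$$X_j = P_i^{(j)}(x)\,(x-x_i)^{\Lambda_i}\,U_i^{(j)},$$
with $P_i^{(j)}$ holomorphic and invertible at $x_i$ and $U_i^{(j)}$ constant invertible.  Equality of the local monodromies $(U_i^{(j)})^{-1}\exp(2\pi i\Lambda_i)U_i^{(j)}$ forces $D_i:=U_i^{(1)}(U_i^{(2)})^{-1}$ to commute with $\exp(2\pi i\Lambda_i)$, and distinctness of the latter's eigenvalues then forces $D_i$ to be diagonal.  Since a diagonal matrix commutes with $(x-x_i)^{\Lambda_i}$, this factor cancels and
$$g = P_i^{(1)}\, D_i\, (P_i^{(2)})^{-1}$$
near $x_i$, which is holomorphic and invertible there.

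Consequently $g$ extends to a globally holomorphic, everywhere invertible map $\CP^1\to GL_n(\C)$; by Liouville $g$ is constant, and the normalisation $g(x_b)=\mathrm{Id}$ forces $g\equiv\mathrm{Id}$, so $X_1\equiv X_2$ and the two connections coincide, a fortiori are isomorphic as trivialised pairs.  The only delicate point in the argument is the extension across the singularities, which is where all hypotheses are used: distinct eigenvalues of the monodromies give $D_i$ diagonal, equality of residue conjugacy classes gives a common diagonal model $\Lambda_i$, and equality (not merely conjugacy) of the monodromy representations at a common base point is what makes $g$ single-valued in the first place; the non-trivial input is checking nonresonance at the zeros of $p_\eps$, which is arranged by taking $\eps$ sufficiently small as in the realisation theorems above.
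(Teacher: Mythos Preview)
Your approach is essentially the same as the paper's: form the ratio $g=X_1X_2^{-1}$, check it is single-valued because the monodromy representations coincide, and then extend across each Fuchsian singularity using the local nonresonant canonical form, so that the multivalued factors $(x-x_i)^{\Lambda_i}$ cancel. The paper carries this out in the proof of Theorem~\ref{fund_thm}, with the same local computation.

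There is, however, a genuine slip in your justification of nonresonance at the zeroes $x_\ell$ of $p_\eps$. You claim that the eigenvalue differences of the residue, being of the form $(\lambda_i(x_\ell)-\lambda_j(x_\ell))/p_\eps'(x_\ell)$, ``blow up as $p_\eps'(x_\ell)\to 0$ and hence avoid $\Z$ for $\eps$ small''. This is false: a quantity tending to infinity in modulus will cross integers infinitely often, so there is no neighbourhood of $\eps=0$ on which resonance is excluded at all $x_\ell$. The paper does not attempt to prove nonresonance for all small $\eps$; instead it observes that the resonant values of $\eps$ form a set of codimension one in parameter space, so that the locus $\{(x,\eps):p_\eps(x)=0,\ \eps\ \text{resonant}\}$ has codimension two in $(x,\eps)$-space, proves the extension of $g$ on the complement, and then fills in by Hartogs' theorem. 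For the proposition read at a fixed $\eps$, the statement is to be understood ``in our context'', i.e.\ for the generic (nonresonant) $\eps$; your argument is then correct once you replace the erroneous sentence by simply invoking this genericity, or, if you want the result for all $\eps$ in the polydisk, you should pass to the family and use the Hartogs step as the paper does.
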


We emphasize that our bundles are trivialized, not simply trivial. The proof is given in the course of the proof of Theorem \ref{fund_thm}.

\section{The compatibility condition: glueing DS domains.}\label{sec:compatibility}  

Given our Stokes matrices $ C_{j,\eps,S_s}^U, C_{j,\eps,S_s}^L$, with same limit at $\eps=0$, independently of $s$, we want to realize a corresponding differential equation  (with a fixed formal normal form) over a polydisk in parameter space. We have already done this over our DS domains, and want to sew the results together. This involves three steps: \begin{enumerate} 
\item Glueing over DS domains. If the Stokes matrices are arbitrary on each DS domain, then there is no reason why the realized families over DS domains $S_s$ and $S_{s'}$ should be analytically equivalent one to the other  over $S_s\cap S_{s'}$. A suitable compatibility condition is necessary to ensure that this is the case and so to allow a glueing of the different DS realizations  in a uniform family depending on $\eps\in\Sigma_0$, where $\Sigma_0$ is the complement of $\{\Delta=0\}$;
\item  Showing that this extends to the generic locus of $\Delta=0$, where $\Delta$ is the discriminant of $p_\eps(x)$; this generic locus is the set of $\eps$ for which   $p_\eps(x)$ has one double zero and the remaining roots are simple; 
\item Extending to the rest of the polydisk by appealing to Hartogs' theorem. \end{enumerate} This section is concerned with the first step. 

\bigskip
Recall that  $\Sigma_0$ is covered by  the $C_k$ DS domains:  we define the compatibility condition on the intersection of the DS domains. So far,  given  our Stokes matrices $ C_{j,\eps,S_s}^U, C_{j,\eps,S_s}^L, C_{j,\sigma(j),\eps, S_s}^G$, we have realized our system abstractly as a pair  (bundle, connection) on the Riemann sphere, and showed that we could then realize it also as a singular connection on a globally trivialized bundle.  For this connection, we had some freedom on the choice of the monodromy $M_\infty(\eps)$ around infinity defined in \eqref{Choice_M_infty0} and the monodromy around $R$ is determined accordingly.   

No matter how it is presented, such a bundle over $\CP^1$, equipped with a singular connection on  the complement of the zeroes $x_1(\eps),..,x_{k+1}(\eps)$ of $p_\eps$, and of $x_{k+2} = R, x_{k+3}=\infty$, comes with a monodromy representation
$$\mathcal N_\eps: \pi_1(\CP^1\setminus\{x_1(\eps),..,x_{k+3}\})\rightarrow GL(n,\C),$$
defined up to global conjugation: one chooses a base point, then integrates. 
We note that since $M_\infty(\eps)$ is given in \eqref{Choice_M_infty0} and $M(\eps)$ is given in \eqref{def:M_eps}, one is in essence looking at the monodromy of the restriction of the connection to a disk $\D_{\frac{3R}{4}}$ of radius $\frac{3R}4$ :
$$\mathcal M_\eps: \pi_1(\D_{\frac{3R}{4}}\setminus\{x_1(\eps),..,x_{k+1}(\eps)\})\rightarrow GL(n,\C).$$

We do note, however, that the automorphisms of the bundle on $\D_{\frac{3R}{4}}$   a priori just form a subgroup of the diagonal matrices and not necessarily scalars; it is only when one goes to $\CP^1$  that one is forced to  an indecomposable representation.

Also note that as one changes DS domains, the open sets on which the transition matrices $ C_{j,\eps,S_s}^U, C_{j,\eps,S_s}^L ,C_{j,\sigma(j),\eps, S_s}^G$ were defined change; a bifurcation occurs.   If one computes the monodromy along a loop, one gets an ordered product of inverses of transition matrices  of the open sets that the loop intersects, taken in the inverse order of intersection (see for instance Example~\ref{example_compatibility} below). In a different DS domain, for the same loop, one gets a different product, even though the monodromy representations  must be the same.

\begin{compatibility}\label{Condition_comp} Let us assume given our data of a bundle plus connection, in picture $A$ (or $B$), over our family of  DS domains $S_s $. The Compatiblity Condition is as follows:
\begin{itemize}
\item
We fix a base point $x=\frac34 R$, and choose identifications of the fibers of our bundle over that base point, holomorphically and continuously up to the boundary in $\eps \in S_s\cap S_{s'}$. Then, for each $\eps$ in $S_s\cap S_{s'}$, we ask that the monodromy representations $\mathcal M_\eps, \mathcal M'_\eps$ defined by $ C_{j,\eps,S_s}^U, C_{j,\eps,S_s}^L ,C_{j,\sigma(j),\eps, S_s}^G$ and $ C_{j,\eps,S_{s'}}^U, C_{j,\eps,S_{s'}}^L ,C_{j,\sigma(j),\eps, S_{s'}}^G$ be equivalent, that is conjugate by an invertible  matrix $G_\eps= G_\eps(s,s')$ depending only on $\eps$,  holomorphically,  continuous up to the boundary points of each DS domain, and such that $G_0(s,s')=\mathrm{id}$.
\begin{equation}\mathcal M_{  \eps,S_s}  = G_\eps \mathcal M_{ \eps,S_{s'}} G_\eps^{-1},  \quad \eps\in S_s\cap S_{s'}.\label{cond_compatibility}\end{equation}
  \item The $G_\eps(s,s')$ form a cocycle ($G_\eps(s,s')G_\eps(s',s'') = G_\eps(s,s'')$) and  this cocycle is trivial: there exist invertible matrices $\Gamma_\eps(s)$ depending analytically on $\eps\in S_s$ with continuous limit  at the boundary points of the DS domain such that \begin{equation}G_\eps(s,s') = \Gamma_\eps(s)^{-1} \Gamma_\eps(s'),\label{def:Gamma}\end{equation}
  and $\Gamma_0(s)=\mathrm{id}$.
\item The $S_s$ forming a covering on the \'etale sense, condition also holds when $S_s$ is a ramified DS domain as in Figure~\ref{tubular} and we consider a connected component of its self-intersection (this happens for instance when $k=1$, and also when considering neighbourhoods of regular points of $\{\Delta=0\}$). In that case, one also asks that the matrices be a trivial cocycle in the sense that the matrix  $G_\eps(s,s)$ be equal to the product $\widetilde \Gamma_\eps(s) \widehat\Gamma_\eps(s)^{-1}$, where $\widetilde \Gamma,\widehat\Gamma$ represent  the two different branches of the function $\Gamma_\eps(s)$ as one turns around the divisor. \end{itemize}
\end{compatibility}

\begin{figure}\begin{center}
\includegraphics[width=8cm]{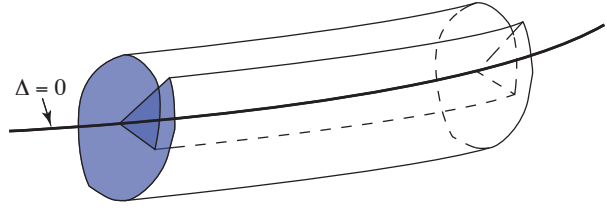}
\caption{A self-intersection of $S_s$ providing a tubular neighborhood of $\Delta=0$.}\label{tubular}
\end{center} \end{figure}

\begin{remark}\label{remark:necessity} The Compatibility Condition~\ref{Condition_comp} is necessary. Indeed, when considering an analytic family \eqref{eq_deployee}, it is obvious that the monodromy representations $\mathcal{M}_{\eps,S_s}$ and $\mathcal{M}_{\eps,S_{s'}}$ are conjugate since they are two representations of the monodromy of the same family \eqref{eq_deployee}. 
For the same reason, the cocycle of the $G_\eps(s,s')$ is trivial. What needs to be proved is the limit properties of $G_\eps(s,s')$ and $\Gamma_\eps(s)$. 
Let us suppose for instance that the given set of loops starts at the base point $x_b= \frac{3R}4$ in $\Omega_{k,\eps,s}^-\cap \Omega_{k,\eps,s'}^-$. It is shown in \cite{HLR} that well chosen  normalizing changes of coordinates $H_{j,\eps,s}^\pm$ (fibered in $x$!) over $\Omega_{j,\eps,s}^\pm$ transforming the normal form \eqref{normal_form_eps} into the family \eqref{eq_deployee} have a limit at points of $\{\Delta=0\}$. We can of course consider that the monodromy representation $\mathcal{M}_{\eps,S_s}$ is the expression  of the monodromy maps in the canonical basis over the base point $x_b$. This canonical basis is transformed into a basis given by the columns of $H_{k,\eps,s}^-(x_b)$ for  \eqref{eq_deployee} and $\mathcal{M}_{\eps,S_s}$ is the monodromy representation in that basis. We want to write the monodromy $\mathcal{M}_{\eps,S_s}$ in a fixed basis, independent of $\eps$ and $S_s$. We choose as a fixed basis the one given by the columns of 
$H_{k,0}^-(x_b)$. Then the monodromy is of the form $\Gamma_\eps(s)\mathcal{M}_{\eps,S_s}(\Gamma_\eps(s))^{-1}$, where the columns of the matrix  $(\Gamma_\eps(s))^{-1}$ are the column vectors of $H_{k,0}^-(x_b)$ written in basis formed by the column vectors of $H_{k,\eps,S_s}^-(x_b)$, namely $H_{k,0}^-(x_b)= H_{k,\eps,s}^-(x_b)(\Gamma_\eps(s))^{-1}$, which yields obviously $\Gamma_\eps(s) = (H_{k,0}^-(x_b))^{-1} H_{k,\eps,s}^-(x_b)$, from which  the result follows.\end{remark}

\begin{remark}\label{identity-at-eps=0}Since we are deforming from the same connection at $\eps=0$, one could have dropped the condition that the matrices $G_\eps(s,s'), \Gamma_\eps(s)$ take value $1$ at $\eps=0$. We would then have allowed the values at $\eps=0$ to lie in the automorphisms of the formal normal form. Then one does not necessarily have the same Stokes data, but equivalent Stokes data at $\eps =0$.
\end{remark}

\begin{lemma}\label{monodromy_equal}
The Compatibility Condition \ref{Condition_comp} allows the monodromy representations to be made equal (instead of just equal up to conjugacy), in a consistent way. Indeed, acting by  $\Gamma_\eps$ in the fibers over our base point, and so setting
$$\widetilde{\mathcal M}_{\eps,S_s}  = \Gamma_\eps(s) \mathcal M_{ \eps,S_{s}} \Gamma_\eps^{-1}(s),$$
one obtains that the transformed $\widetilde G_\eps(s,s')$ are the identity. \end{lemma}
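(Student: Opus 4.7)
The plan is to verify the claim by a direct algebraic computation, since all the ingredients are already encoded in the Compatibility Condition. The key observation is that conjugating the monodromies $\mathcal M_{\eps,S_s}$ by the trivializing $\Gamma_\eps(s)$ of the cocycle $\{G_\eps(s,s')\}$ should intertwine the compatibility relation into an equality; the only work is to keep track of indices.

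Concretely, I would start from the definition
$$\widetilde{\mathcal M}_{\eps,S_s} = \Gamma_\eps(s)\,\mathcal M_{\eps,S_s}\,\Gamma_\eps(s)^{-1}$$
and substitute the compatibility relation (\ref{cond_compatibility}), namely $\mathcal M_{\eps,S_s} = G_\eps(s,s')\,\mathcal M_{\eps,S_{s'}}\,G_\eps(s,s')^{-1}$, valid on $S_s\cap S_{s'}$. Then I would plug in the cocycle triviality (\ref{def:Gamma}): $G_\eps(s,s') = \Gamma_\eps(s)^{-1}\Gamma_\eps(s')$. The factors $\Gamma_\eps(s)\Gamma_\eps(s)^{-1}$ and $\Gamma_\eps(s)^{-1}\Gamma_\eps(s)$ collapse to the identity at both ends of the expression, leaving precisely $\Gamma_\eps(s')\,\mathcal M_{\eps,S_{s'}}\,\Gamma_\eps(s')^{-1} = \widetilde{\mathcal M}_{\eps,S_{s'}}$. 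This says exactly that the transformed monodromies agree on overlaps, i.e. $\widetilde G_\eps(s,s') = \mathrm{id}$.

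For the ramified case mentioned in the third bullet of the Compatibility Condition, I would repeat exactly the same computation using the two branches $\widetilde\Gamma_\eps(s)$ and $\widehat\Gamma_\eps(s)$ in place of $\Gamma_\eps(s)$ and $\Gamma_\eps(s')$, and the hypothesis $G_\eps(s,s) = \widetilde\Gamma_\eps(s)\widehat\Gamma_\eps(s)^{-1}$ yields the same telescoping. Since the manipulation is purely formal, there is no real obstacle; the substance of the lemma lies in the cocycle hypothesis already built into the Compatibility Condition, and the content of this statement is simply the observation that once a cocycle is trivialized, one can use the trivialization to rigidify the representations themselves, not merely their conjugacy classes. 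The analyticity and boundary behaviour of the $\widetilde{\mathcal M}_{\eps,S_s}$ are inherited from the corresponding properties of $\mathcal M_{\eps,S_s}$ and $\Gamma_\eps(s)$ posited in the Compatibility Condition, with $\widetilde{\mathcal M}_{0,S_s}=\mathcal M_{0,S_s}$ since $\Gamma_0(s)=\mathrm{id}$.
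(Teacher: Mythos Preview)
Your proposal is correct and matches the paper's approach; in fact the paper states this lemma without proof, treating the telescoping computation you carry out as immediate from the definitions in the Compatibility Condition. Your write-up simply makes explicit the one-line substitution $\Gamma_\eps(s)G_\eps(s,s')=\Gamma_\eps(s')$ that the paper leaves to the reader.
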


Of course, once one has done this, the monodromy on a loop is no longer an ordered product of Stokes matrices and gate matrices and their inverses, but rather a conjugate by $\Gamma_\eps$ of this product. 

We can extend  the normalised representations $\widetilde{\mathcal M}_{\eps,S_s}$ to the full complement of $k+3$ points in  the Riemann sphere; one has fairly immediately:

 \begin{proposition} Let $S_s$ and $S_{s'}$ be two intersecting DS domains.  The Compatibility Condition~\ref{Condition_comp} implies that 
the full monodromy representations $\mathcal{N}_{\eps,S_s}$ and $\mathcal{N}_{\eps,S_{s'}}$ around the $k+3$ singular points are the same (or their conjugates $\widetilde{\mathcal N}_{\eps,S_s}$ and $\widetilde{\mathcal N}_{\eps,S_{s'}}$ by $\Gamma_\eps(s)$ and $\Gamma_\eps(s')$ respectively),  provided we choose  the monodromy at $\infty$ as in \eqref{Choice_M_infty0} (with $M_\infty$ having all nonzero entries) in  the realization over each DS domain given by Theorem~\ref{thm:sectoral}. 
\end{proposition}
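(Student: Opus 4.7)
My plan is to verify that the conjugated representations $\widetilde{\mathcal N}_{\eps,S_s}$ coincide on a set of generators of $\pi_1(\CP^1\setminus\{x_1,\dots,x_{k+3}\})$, splitting loops into three classes: those around the $k+1$ zeros of $p_\eps$ inside the disk $\D_{3R/4}$, the loop $\ell_\infty$ around infinity, and the loop $\ell_R$ around $R$. The first class is already handled by Lemma~\ref{monodromy_equal} applied to the Compatibility Condition~\ref{Condition_comp}: the matrices $\Gamma_\eps(s)$ trivialise the cocycle precisely so that $\widetilde{\mathcal M}_{\eps,S_s}=\widetilde{\mathcal M}_{\eps,S_{s'}}$ on such loops, so no further work is needed there.

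The heart of the proof is the loop at infinity. Theorem~\ref{thm:sectoral} only specifies the monodromy $M_{\infty,s}(\eps)$ up to the freedom in the choice of the conjugating function $\Gamma$ in \eqref{Choice_M_infty0}. I will exploit this freedom by taking, for each DS domain $S_s$, $\Gamma(\eps):=\Gamma_\eps(s)^{-1}$, where $\Gamma_\eps(s)$ is the trivialising matrix of the cocycle $G_\eps(s,s')$ from \eqref{def:Gamma}. Since $\Gamma_\eps(s)$ is analytic, extends continuously to the boundary of $S_s$, and equals the identity at $\eps=0$, the conjugate $\Gamma_\eps(s)^{-1} M_\infty \Gamma_\eps(s)$ is a small analytic perturbation of $M_\infty$ for $\eps$ in a polydisk $\D_\rho$; by openness of the required genericity conditions (distinct eigenvalues, all entries nonzero, $M_\infty\cdot M(0)^{-1}$ with distinct eigenvalues), this choice remains admissible for Theorem~\ref{thm:sectoral} after possibly shrinking $\rho$. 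With this choice, $M_{\infty,s}(\eps)=\Gamma_\eps(s)^{-1} M_\infty \Gamma_\eps(s)$, and the conjugation $\widetilde M_{\infty,s}=\Gamma_\eps(s)M_{\infty,s}\Gamma_\eps(s)^{-1}$ collapses the two factors to yield $\widetilde M_{\infty,s}(\eps)=M_\infty$, independent of $s$.

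The loop around $R$ is then determined. By Theorem~\ref{thm:sectoral}, $M_{R,s}(\eps)\,M_{\infty,s}(\eps)=M_s(\eps)$, and conjugation by $\Gamma_\eps(s)$ preserves this multiplicative relation, giving $\widetilde M_{R,s}\cdot \widetilde M_{\infty,s}=\widetilde M_s$. The right-hand side is $s$-independent by the first step, and $\widetilde M_{\infty,s}=M_\infty$ is $s$-independent by the second step; hence $\widetilde M_{R,s}$ is also $s$-independent. Combined with agreement on all other generators, this gives $\widetilde{\mathcal N}_{\eps,S_s}=\widetilde{\mathcal N}_{\eps,S_{s'}}$.

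The hard part is less the monodromy bookkeeping than the verification that the coordinated choice $\Gamma(\eps)=\Gamma_\eps(s)^{-1}$ meets the hypotheses of Theorem~\ref{thm:sectoral} uniformly across all DS domains and up to the strata of $\{\Delta=0\}$ where distinct DS domains abut. The continuous-limit and $\eps=0$ normalisation properties of $\Gamma_\eps(s)$ built into the Compatibility Condition are precisely what is needed: they guarantee that no degeneration occurs on the closure of $S_s\cap \D_\rho$ and that the conjugate of $M_\infty$ stays in the Zariski open set with nonzero entries. Without the cocycle-triviality and boundary-continuity clauses of Condition~\ref{Condition_comp}, one would have only an abstract equivalence of the interior monodromies, not an explicit coordinated realization in which both the loop at infinity and the residual loop at $R$ align after one common conjugation.
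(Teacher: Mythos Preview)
Your proof is correct and follows essentially the same approach as the paper's own argument: both reduce to the three classes of loops, invoke Lemma~\ref{monodromy_equal} for the loops around the zeros of $p_\eps$, use the freedom in \eqref{Choice_M_infty0} with the trivialising matrices $\Gamma_\eps(s)$ from the Compatibility Condition so that the conjugated monodromy at infinity becomes the constant $M_\infty$, and then deduce equality at $R$ from the relation $M_R\cdot M_\infty = M$. Your version is slightly more explicit about which power of $\Gamma_\eps(s)$ to plug into \eqref{Choice_M_infty0} and about why this choice remains admissible for Theorem~\ref{thm:sectoral}, but the underlying strategy is identical.
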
 
\begin{proof} In other words, if we conjugate by the $\Gamma_\eps(s), \Gamma_\eps(s')$ (change trivializations at the base point), so that the monodromy representations are the same, we then add in the same monodromy matrix  at infinity. More explicitly, we consider the monodromy $\mathcal{M}_{S_s}(\eps)$ and $\mathcal{M}_{S_{s'}}(\eps)$. By the Compatibility Condition~\ref{Condition_comp} we have that 
$$\widetilde{\mathcal M}_{\eps,S_s} = \Gamma_\eps(s)\mathcal{M}_{S_s}(\eps)(\Gamma_\eps(s))^{-1}= \Gamma_\eps(s')\mathcal{M}_{S_{s'}}(\eps)(\Gamma_\eps(s'))^{-1}=\widetilde{\mathcal M}_{\eps,S_{s'}} .$$
When constructing the realizations, the choice of monodromy we make at infinity in \eqref{Choice_M_infty0} with the function $\Gamma_\eps(s)$ defined in \eqref{def:Gamma} guarantees that the monodromies around infinity in the glued domain $\{|x|> \frac{R}2\}\cup\{\infty\}$ calculated from the same sector $\Omega_k^-$ are the same, indeed constant and equal to $M_\infty$. 
Then equality also  follows for the monodromies $\widetilde{M}_{R, S_s}$ and  $\widetilde{M}_{R, S_{s'}}$ around $x=R$.\end{proof}

 \begin{theorem}\label{fund_thm} Let  $\mathcal{E}_{S_s}$ and $\mathcal{E}_{S_{s'}}$ be two normalized realizations over $\CP^1$ as constructed in Theorem~\ref{thm:sectoral}. If the Compatibility Condition~\ref{Condition_comp} is satisfied, then
$\mathcal{E}_{S_s}$ and $\mathcal{E}_{S_{s'}}$ are equal on $S_s\cap S_{s'}$. 
\end{theorem}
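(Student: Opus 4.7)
The strategy I will follow is to prove a Riemann--Hilbert-type uniqueness statement (the proposition stated just before the theorem, whose proof is intertwined with the proof of the theorem itself) and then to upgrade the resulting isomorphism to equality via the normalization built into Theorem~\ref{thm:sectoral}.

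For each $\eps \in S_s \cap S_{s'}$, both $\mathcal{E}_{S_s}$ and $\mathcal{E}_{S_{s'}}$ are of the form \eqref{extra-pole} with the same $k+3$ singularities (the zeros of $p_\eps$ together with $R$ and $\infty$) and the same formal normal form \eqref{normal_form_eps} at the origin. The conjugacy classes of the residues at the $x_\ell$ are thus the same, and the compatibility choice \eqref{Choice_M_infty0} of the monodromy at $\infty$, combined with the preceding proposition, ensures that the full monodromy representations $\mathcal{N}_{\eps, S_s}$ and $\mathcal{N}_{\eps, S_{s'}}$ coincide (not merely up to conjugacy) in the common trivialization at the base point $x_b = 3R/4$. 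In particular, the conjugacy classes of the residues at $R$ and at $\infty$ also agree.

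To construct the isomorphism, I would pick fundamental matrix solutions $X_s(x, \eps)$ and $X_{s'}(x, \eps)$ of the two connections, each normalized to the identity at the base point. Equality of the monodromy representations makes
\[ C(x, \eps) := X_{s'}(x, \eps)\, X_s(x, \eps)^{-1} \]
single-valued on $\CP^1$ minus the $k+3$ singularities. Near each Fuchsian puncture, the local canonical forms of the two residues match (same conjugacy class, nonresonant for $\eps$ small), so the standard asymptotic analysis for Fuchsian systems shows that $C$ extends holomorphically across. Thus $C(x, \eps)$ is a global holomorphic $\mathrm{Mat}(n, \C)$-valued function on $\CP^1$, hence a constant matrix $C_\eps$ by Liouville, and the intertwining relation $C_\eps\, \mathcal{E}_{S_s}\, C_\eps^{-1} = \mathcal{E}_{S_{s'}}$ gives the isomorphism asserted by the proposition preceding the theorem.

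It then remains to show that $C_\eps$ acts trivially. Since $C_\eps$ conjugates one connection to the other and both share the formal normal form at the origin, $C_\eps$ must commute with $\Lambda_0(\eps)$, whose eigenvalues are distinct, so $C_\eps$ is diagonal. Both $\mathcal{E}_{S_s}$ and $\mathcal{E}_{S_{s'}}$ are in the normal form of Theorem~\ref{thm:sectoral}, with the same $n-1$ off-diagonal entries of $B(x, \eps)$ normalized to $1$; a diagonal conjugation $C_\eps = \mathrm{diag}(c_1, \dots, c_n)$ multiplies each such entry by $c_i / c_j$, so preserving all $n-1$ normalizations simultaneously forces $C_\eps$ to be a scalar multiple of the identity by Lemma~\ref{normalization_indecomposable}. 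Scalar matrices act trivially by conjugation on any connection, so $\mathcal{E}_{S_{s'}} = \mathcal{E}_{S_s}$ on $S_s \cap S_{s'}$. The main technical obstacle I anticipate is ensuring analytic dependence of $C_\eps$ on $\eps$ with the correct boundary behavior at $\Delta = 0$; this should follow from the analyticity and boundary continuity of $\Gamma_\eps(s)$ required by the Compatibility Condition, together with the corresponding properties of the realizations in Theorem~\ref{thm:sectoral}.
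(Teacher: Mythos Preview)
Your approach is essentially the paper's: form $X_{s'}X_s^{-1}$, show it is single-valued from equality of the monodromy representations, extend holomorphically across each puncture via the Fuchsian local form, and then use Liouville plus the normalization of Theorem~\ref{thm:sectoral} to get equality. You are in fact more explicit than the paper about the endgame (constant, then diagonal via $[\,\cdot\,,\Lambda_0(\eps)]=0$, then scalar by Lemma~\ref{normalization_indecomposable}).

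The one place where the paper does more than you is the extension across the punctures $x_\ell=x_\ell(\eps)$ coming from the zeros of $p_\eps$. Your assertion that these are ``nonresonant for $\eps$ small'' is not true for \emph{all} small $\eps$: the residue eigenvalues at $x_\ell$ are, in the formal normal form, the diagonal entries of $(\Lambda_0(\eps)+\dots+\Lambda_k(\eps)x_\ell^k)/p_\eps'(x_\ell)$, and two of these can differ by an integer along a codimension-one analytic subset of $\eps$-space. On that subset your ``standard asymptotic analysis'' (which is exactly the paper's eigensolution computation $w_{s,j}=(x-x_\ell)^{\mu_j}g_{s,j}(x)$) does not directly give a holomorphic extension. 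The paper's fix is precisely the concern you anticipate at the end: one carries out the argument for nonresonant $\eps$, obtains the equivalence analytic in $(x,\eps)$ on $\CP^1\times S$ minus the set $\{(x,\eps):p_\eps(x)=0,\ \eps\ \text{resonant}\}$, notes this exceptional set has codimension two, and extends by Hartogs' theorem.
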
 
\begin{proof}
We have seen that we could consider that the monodromy representations of $\mathcal{E}_{S_s}$ and $\mathcal{E}_{S_{s'}}$ are equal. We consider a base point $x_b$ near infinity and loops $\gamma_\ell \in \Pi_1(\CP^1,x_b)$ surrounding $x_\ell$ alone in the positive direction, $\ell=1, \dots, k+3$. We can suppose that the points are numbered so that $\gamma_1^{-1}\dots \gamma_{k+2}^{-1}$ is homotopic to $\gamma_{k+3}$. Let $X_{s}$ (resp. $X_{s'}$) be a fundamental matrix solution whose columns are eigensolutions at $\infty$ of  $\mathcal{E}_{S_s}$ (resp. $\mathcal{E}_{S_{s'}}$).

Let us call $M_{x_\ell}$  the monodromy of $X_s$ and $X_{s'}$ along $\gamma_\ell$.  

One first has that the two realisations are isomorphic on the complement of the singular set. This basically is simply the fact that the induced monodromy on the bundle $\rm{Hom}( \mathcal{E}_{S_s}, \mathcal{E}_{S_{s'}})$ acts trivially on the analytic continuation of the identity map. More explicitly, we consider the map  $y\mapsto P(x)y=X_{s'}X_s^{-1}y$. We need to show that $P(x)$ is well defined. The analytic extension of $X_{s}$ (resp $X_{s'}$) along $\gamma_\ell$ is given by $ X_s M_{s,x_\ell}$ (resp.  $X_{s'} M_{s\,x_\ell}$).
Then $P(x)$ is well defined since
$$\left(X_{s'} M_{s',x_\ell}\right)\left(M_{s, x_\ell}^{-1} X_s^{-1}\right)=X_{s'}X_s^{-1}.$$

The next step is to show  that $P$  can be extended analytically to the singular points. Let us start by showing that $P$ can be extended at $x_{k+3}=\infty$. Our singularity  is non resonant, i.e. no two eigenvalues of the residue matrix differ by an integer, thus ensuring that the monodromy has no multiple eigenvalues and is hence diagonalizable; it is a generic situation for Fuchsian systems, in which the monodromy around the singularity is determined by the eigenvalues of the residue matrix and the system decouples as a sum of one-dimensional systems (\cite{De}).
Thus, if $\mu_1, \dots, \mu_n$ are the  residue  eigenvalues at $x_\infty$, then the columns of $X_s$ (resp. $X_{s'}$) which are eigensolutions are of the form 
$w_{s,j}(x)=x^{-\mu_j} g_{s,j}(x)$ (resp. $w_{s',j}(x)=x^{-\mu_j} g_{s',j}(x)$), where $g_{s,j}$ (resp. $g_{s',j}$) is analytic and nonzero in a neighborhood of $x_\infty$. Moreover, the matrix $L_s$ (resp.  $L_{s'}$) with columns $g_{s,1}(x), \dots, g_{s,n}(x)$ (resp. $g_{s',1}(x), \dots, g_{s',n}(x)$)
has nonzero determinant for $x$ close to $x_\infty$. 
We have that $P(x)w_{s,j}=w_{s',j}$.

Then 
$$\begin{cases}X_s = L_s\,\text{diag} (x^{-\mu_1}, \dots, x^{-\mu_n}),\\ X_{s'}=L_{s'}\,\text{diag} (x^{-\mu_1}, \dots, x^{-\mu_n}).\end{cases}$$ Hence, $P(x) = X_{s'}X_s^{-1} = L_{s'}L_s^{-1}$ is a nice analytic matrix with nonzero determinant in the neighborhood of $x=x_\infty$. 

\medskip

Let us now consider a singular point $x_\ell$ at which the monodromy  $M_{x_\ell}$ is diagonalizable (this is in particular the case for $x_\ell=R$ from our construction), and let $N\in GL(n,\C)$ be such that $N^{-1}M_{x_\ell} N= D$, where $D$ is diagonal; we place ourselves again in the generic situation at which there is no resonance (i.e. no two residue eigenvalues differ by an integer).
  The matrices $X_sN$ and $X_{s'}N$ are still fundamental matrix solutions of $\mathcal{E}_s$ and $\mathcal{E}_{s'}$, and their columns are eigensolutions for the monodromy around $x_\ell$. 
 If $\mu_1, \dots, \mu_n$ are the residue  eigenvalues at $x_\ell$, then the eigensolutions are of the form 
$w_{s,j}=(x-x_\ell)^{\mu_j} g_{s,j}(x)$ (resp. $w_{s',j}=(x-x_\ell)^{\mu_j}  g_{s',j}(x)$ ), where $g_{s,j}$ (resp. $g_{s',j}$) is analytic and nonzero in a neighborhood of $x_\ell$. Moreover, the matrix $Q_s$ (resp.  $Q_{s'}$) with columns $g_{s,1}(x), \dots, g_{s,n}(x)$ (resp. $g_{s',1}(x), \dots, g_{s',n}(x)$)
has nonzero determinant for $x$ close to $x_\ell$.  This comes from the fact that each system is diagonalizable near $x_\ell$ and the eigensolutions for the diagonal form are given by $C(x-x_\ell)^{\mu_j} e_j$, where $C\in\C^*$ and $e_j$ is the $j$-th vector of the standard basis.
Then 
$$\begin{cases}X_s N= Q_s\,\text{diag} ((x-x_\ell)^{\mu_1}, \dots, (x-x_\ell)^{\mu_n}),\\ X_{s'} N=Q_{s'}\,\text{diag} ((x-x_\ell)^{\mu_1}, \dots, (x-x_\ell)^{\mu_n}).\end{cases}$$ Hence, $P(x) = X_{s'}X_s^{-1} = Q_{s'}Q_s^{-1}$ is a nice analytic matrix with nonzero determinant in the neighborhood of $x=x_\ell$. 

\medskip We have built an equivalence depending analytically on $(x,\eps)$ (since it is the case for $X_s$ and $X_{s'}$) on the domain $\CP^1\times S\setminus \{(x,\eps): p_\eps(x)=0, \eps\ {\rm resonant}\}$. From our hypothesis, the set of resonant values of $\eps$ is of codimension $1$ in $\eps$-space. Hence, the set $ \{(x,\eps): p_\eps(x)=0, \eps\ {\rm resonant}\}$ is of codimension $2$, and we can extend the equivalence to it by Hartogs' theorem. 
 \end{proof}

 \subsection{Generic bifurcations and  the compatibility condition}

Now suppose that  $\eps$ lies in a connected component $S_s\cap S_{s'}$ of two DS domains: the bifurcation from $S_s$ to $S_{s'}$ is precisely obtained by changing some angle $\theta$ with which one goes out to infinity; this can switch some singular point(s) $x_\ell$ from $\alpha$-type to $\omega$-type (or the converse), or can change the points of attachment of the separatrices at infinity to the singular points.  We would like, in this subsection, to illustrate how this impacts on the generalised Stokes matrices.

One thus has a bifurcation; in codimension one (the generic bifurcation) there are  basically two types of bifurcations in $\eps$ that force a change in DS domain. Both involve going through a homoclinic connection between two of the separatrices at infinity.  In the process, one gate sector is untied from its endpoints and each end is then tied to a new endpoint. 
\begin{itemize} 
\item \emph{Outer connection}: a zero of $p_\eps(x)$ changes from being $\alpha$-type to $\omega$-type; this occurs at one end of a branch of the skeleton as in Figure~\ref{outer_homo};
\item \emph{Inner connection}: All the zeroes keep the same $\alpha$-type or $\omega$-type as before, but the attachment of the separatrices emerging from infinity to the zeroes of $p_\eps$ changes as in Figure~\ref{inner_homo}.
\end{itemize}
\begin{figure} [ht!]
\begin{center}
\includegraphics[width=9cm]{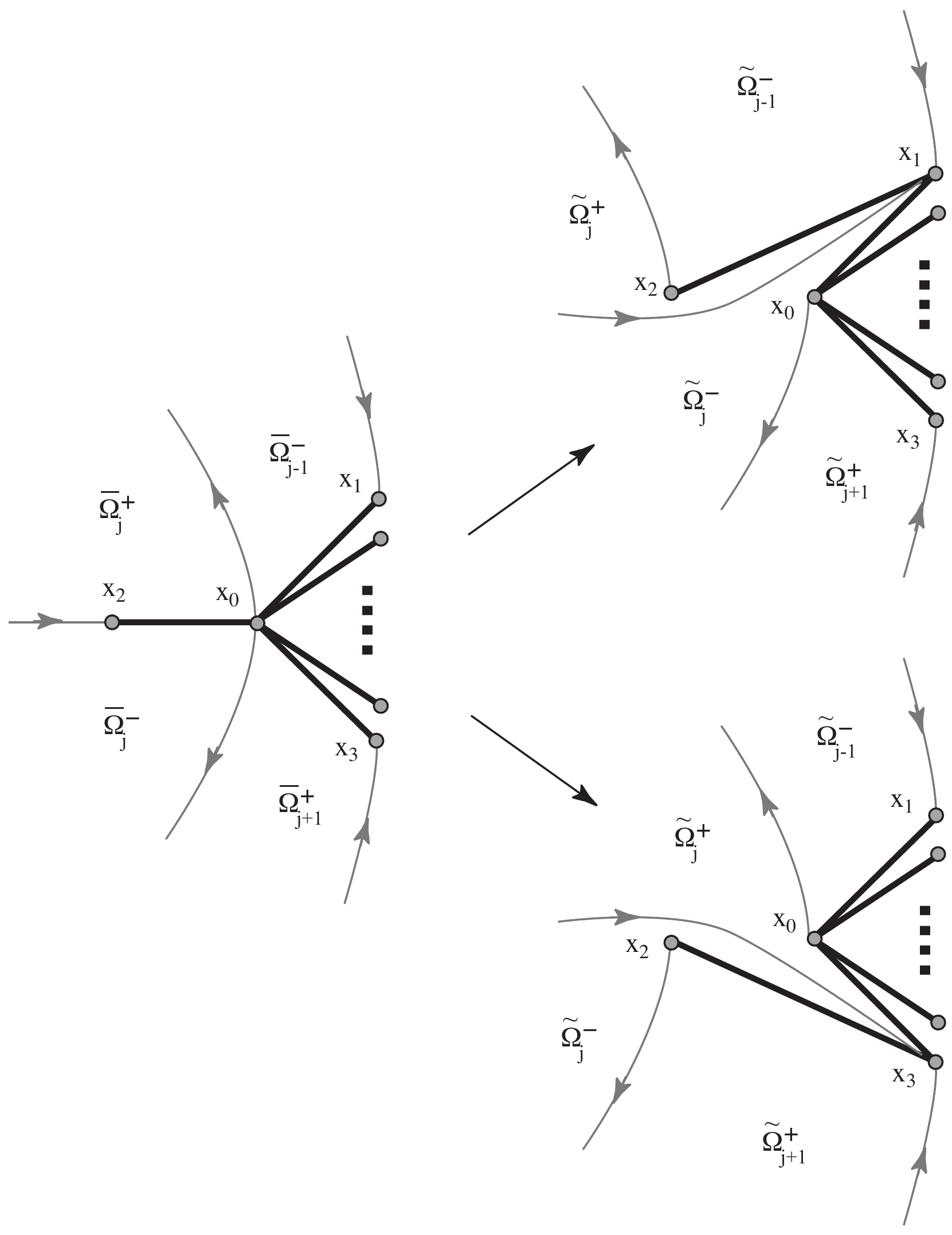}
\caption{ The two ways of passing through an outer homoclinic connection.} \label{outer_homo} \end{center} \end{figure}

\begin{figure} [ht!]
\begin{center}
\includegraphics[width=9cm]{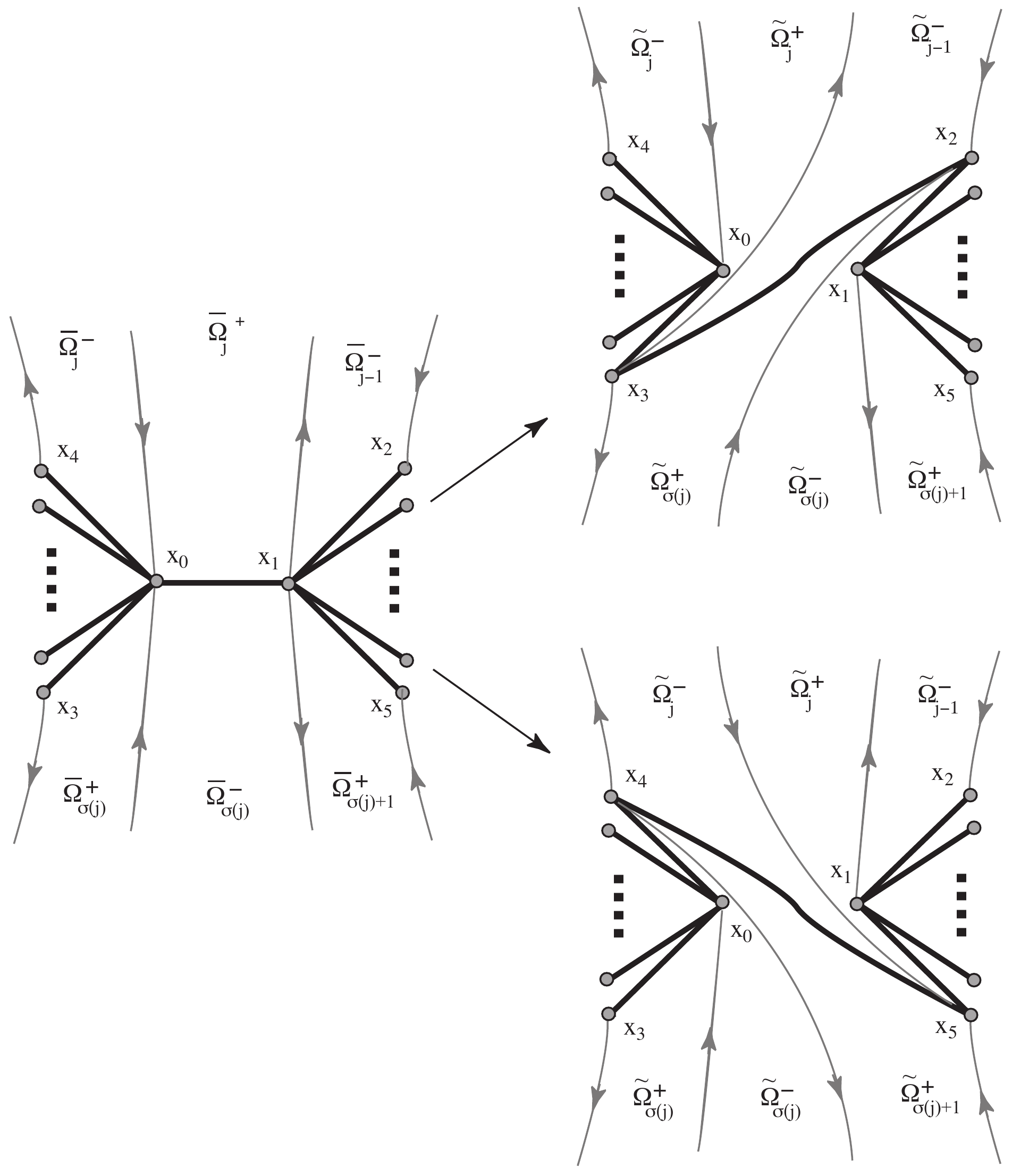}
\caption{ The two ways of passing through an inner homoclinic connection.} \label{inner_homo} \end{center} \end{figure}

\begin{example}\label{example_compatibility} {\bf An example  of the Compatibility Condition.} 
The Compatibility Condition means the following: given any set of loops $\gamma_{x_\ell}$ starting from a base point and going around $x_\ell$ in the positive direction as in Figure~\ref{fig:compatibility}, we can write the \emph{abstract} monodromy $M_{x_\ell, S_s}$  around that loop as a product of matrices $ C_{j,\eps,S_s}^U, C_{j,\eps,S_s}^L ,C_{j,\sigma(j),\eps, S_s}^G$ or their inverses corresponding to the intersection sectors that are crossed around that loop. The set of matrices $\{M_{x_\ell, S_s}\}$ is the monodromy representation $\mathcal M_{\eps,S_s}$ that we compare with $\mathcal M_{\eps,S_{s'}}$. This gives for the example of Figure~\ref{fig:compatibility}:
 \begin{align*}\begin{split}&\begin{cases}
M_{x_1, S_s}=(C_{1, S_s}^L)^{-1}C_{1,\sigma(1), S_s}^G,\\
M_{x_1, S_{s'}}=(C_{1, S_{s'}}^L)^{-1}C_{1,\sigma(1),S_{s'}}^G(C_{2, S_{s'}}^L)^{-1}C_{2,\sigma(2),S_{s'}}^G,\end{cases}\\
&\begin{cases}M_{x_2, S_s}=(C_{1,\sigma(1), S_s}^G)^{-1}(C_{1, S_s}^U)^{-1}(C_{2, S_s}^L)^{-1}C_{2,\sigma(2), S_s}^GC_{1,S_s}^UC_{1,\sigma(1), S_s}^G,\\
M_{x_2, S_{s'}}=(C_{2,\sigma(2),S_{s'}}^G)^{-1}C_{2, S_{s'}}^L(C_{1,\sigma(1),S_{s'}}^G)^{-1}(C_{1, S_{s'}}^U)^{-1}(C_{2, S_{s'}}^L)^{-1}C_{2,\sigma(2),S_{s'}}^G,\end{cases}\\
&\begin{cases}M_{x_3, S_s}=(C_{1,\sigma(1), S_s}^G)^{-1}(C_{1, S_s}^U)^{-1}(C_{2,\sigma(2), S_s}^G)^{-1}(C_{2,S_s}^U)^{-1},\\
M_{x_3, S_{s'}}=(C_{2,\sigma(2),S_{s'}}^G)^{-1}(C_{2,S_{s'}}^U)^{-1}.\\
\end{cases}\end{split}\end{align*} 
 \begin{figure} \begin{center} 
 \includegraphics[width=12cm]{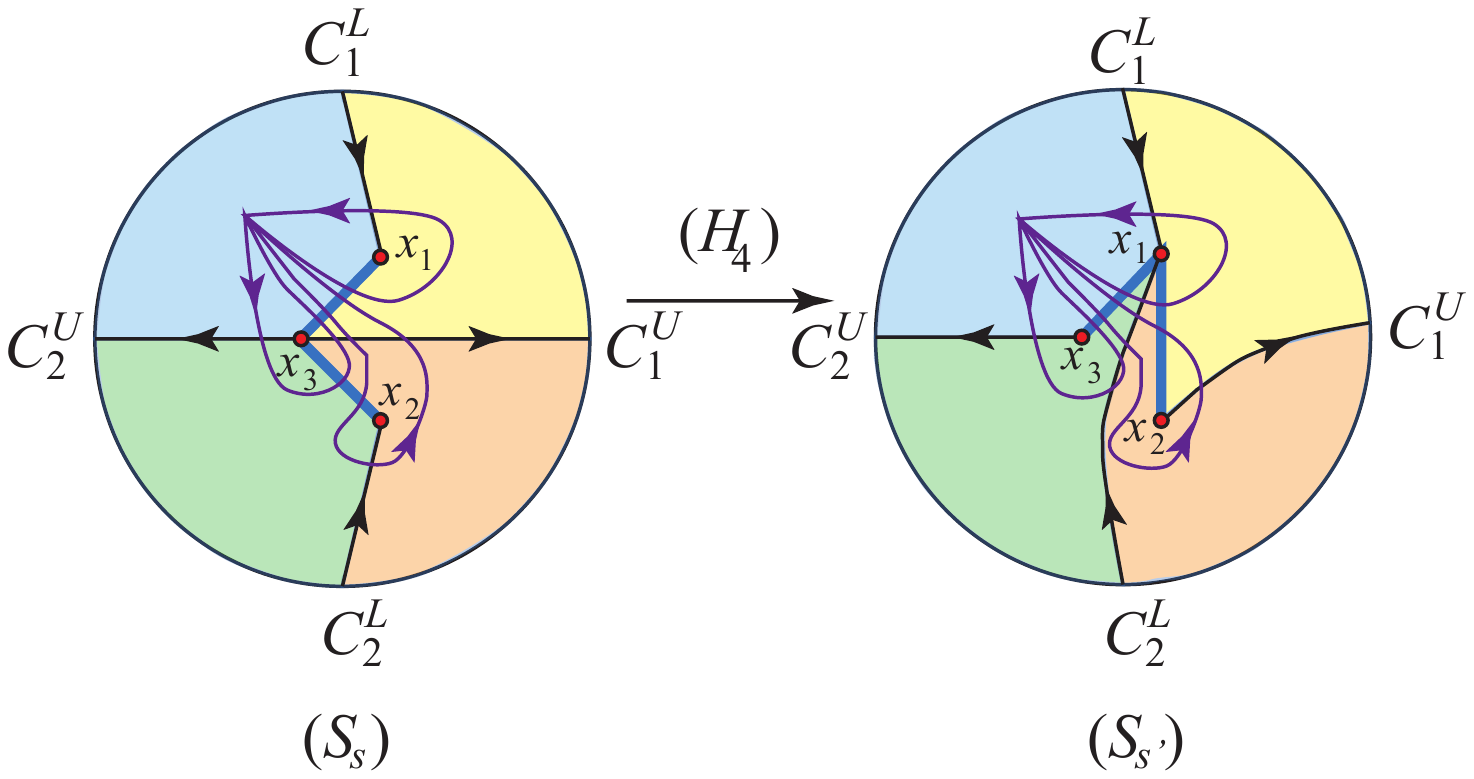}
 \caption{Two sets of sectors when $k=2$ with a transition given by a homoclinic loop through the fourth quadrant and the corresponding monodromy groups.   The permutation $\sigma$ is the identity (resp. the transposition on $1,2$) for $S_s$ (resp. $S_{s'}$). } \label{fig:compatibility} \end{center}\end{figure}\end{example}

\subsection{Extending the realization to the generic locus of $\Delta=0$}\label{sec: auto_intersec}

The generic locus of $\Delta = 0$ correspondes to two zeroes of $p_\eps$ coming together. For arbitrary $k$ this reproduces a parametrized version of the case $k=1$ studied in \cite{cLR2}. 

As one goes to this generic locus of $\Delta = 0$, we note that  the fundamental group of the complement of the zero locus of $p_\eps$ changes: it loses one generator. If one considers the representations defined by $ C_{j,\eps,S_s}^U, C_{j,\eps,S_s}^L ,C_{j,\sigma(j),\eps, S_s}^G$, there is indeed an issue, caused in particular by one gate element, say $C_{j,\sigma(j),\eps, S_s}^G$, as its gate is closing, or rather shrinking to zero. In general, also $C_{j,\sigma(j),\eps, S_s}^G$ has no limit as one goes to $\Delta= 0$. However, as  shown in Sections~\ref{sec:deforming} and \ref{sec:Sectoral_domain}, the passage to ``picture B'', with our choice of normalisation tames the $C_{j,\sigma(j),\eps, S_s}^G$, to $\widetilde C_{j,\sigma(j),\eps, S_s}^G = \rm{Id}$; if in addition 
  the Stokes matrices $ C_{j,\eps,S_s}^U, C_{j,\eps,S_s}^L $  (or their modifications $ \widetilde C_{j,\eps,S_s}^U, \widetilde C_{j,\eps,S_s}^L $) behave continuously when $\eps$ tends to the divisor $\{\Delta(\eps) =0\}$, then we have a continuous family of connections, and our argument about a uniform passage to ``picture C'' goes through, giving a continuous limit.

The same continuity is enforced when one has $S_s$ self-intersecting (one is dealing with an \'etale covering) as one moves around the regular part of the divisor, as in Figure~\ref{tubular} for sectors as in Figure~\ref{auto_intersection}.  We obtain  a uniform differential equation on the tubular neighbourhood, with a limit at the core, as long as the compatibility condition for the monodromy representation is satisfied. 

We thus  have a continuous limit at $\Delta = 0$, which then must be a holomorphic limit at $\Delta = 0$.

On the rest of $\Delta=0$, we are in codimension two, and an appeal to Hartogs' theorem suffices for the extension.
\begin{figure}\begin{center}
\includegraphics[width=8cm]{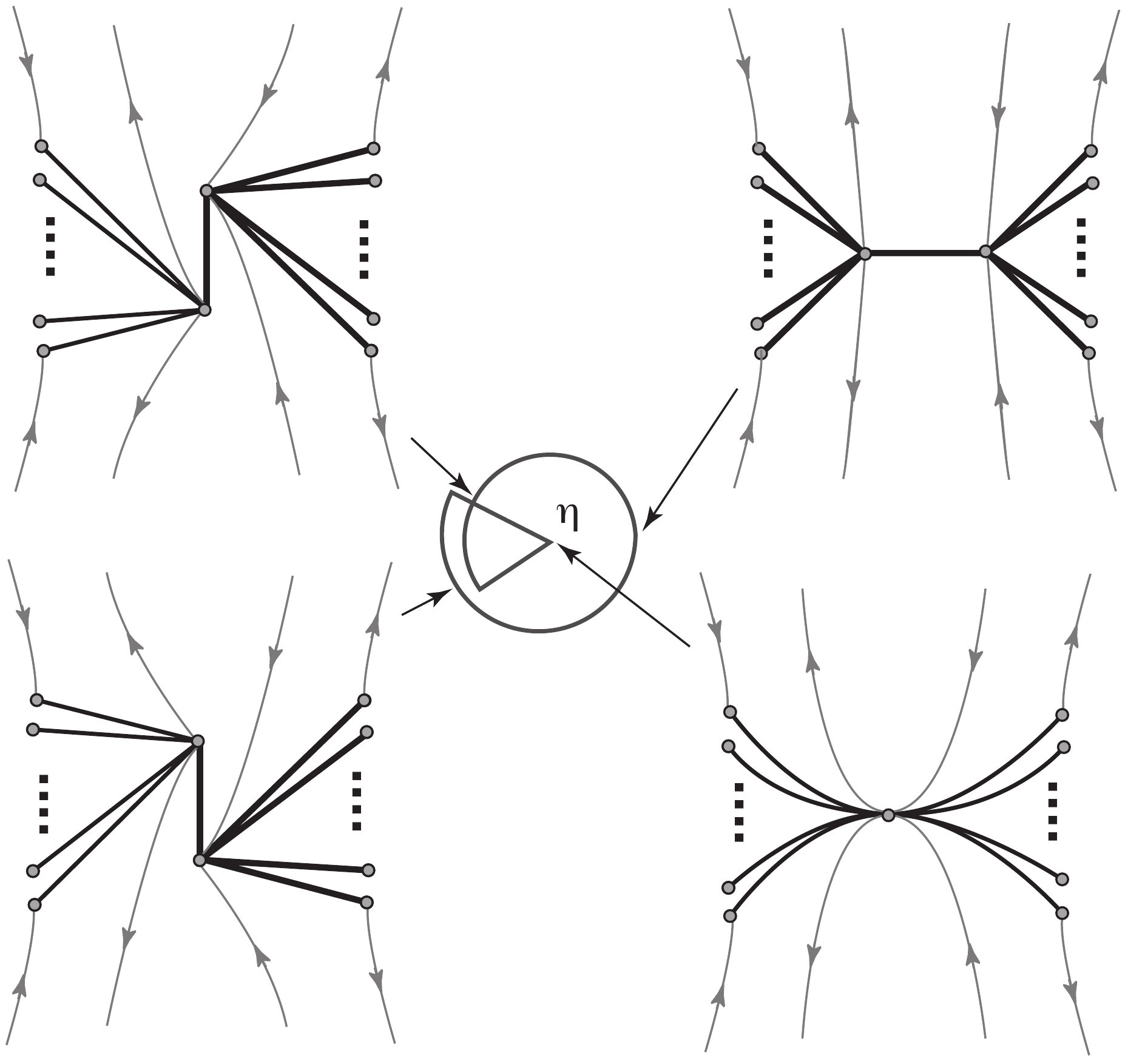}
\caption{The sectors $\Omega_{j,\eps}^\pm$ for a self-intersecting $S_s$ in a tubular neighborhood of $\Delta=0$.}\label{auto_intersection}
\end{center} \end{figure}

\section{The moduli space }

\subsection{The realization theorem}
\begin{theorem}\label{thm: real1} Suppose fixed \begin{itemize}
\item an integer $k\geq1$,
\item formal invariants given by diagonal matrices $\Lambda_0(\eps), \dots, \Lambda_k(\eps)$ depending analytically on $\eps$ in a polydisk $\D_\rho$ such that $\Lambda_0(0)$ has distinct eigenvalues satisfying \eqref{order_eigenvalues}. \end{itemize}

Now, suppose given  for each DS domain $S_s$ of radius $\rho$, collections of normalized invertible (Stokes) upper (resp. lower) triangular matrices $C_{j,\eps, S_s}^U$, (resp. $C_{j,\eps, S_s}^L$), $j=1, \dots, k$, depending analytically on $\eps\in S_s$ with continuous limit at $\eps=0$, independent of $s$, and continuous limit at generic points of $\{\Delta=0\}$. 
Moreover, suppose that the Compatibility Condition~\ref{Condition_comp} is satisfied.
Then, there exists an analytic family of rational linear differential systems \eqref{eq_deployee} for $\eps\in D_\rho$, with formal normal form \eqref{normal_form_eps}, and with Stokes matrices $C_{j,\eps, S_s}^U$ and $C_{j,\eps,S_s}^L$, $j=1, \dots, k$ over $S_s$. A particular analytic family of the form \eqref{extra-pole} exists, with two extra Fuchsian singular points at $R, \infty$, and the family can be uniquely normalised as in Lemma~\ref{normalization_indecomposable}. 

In the particular case where the system is irreducible for $\eps=0$, a second realization exists of the simpler form 
\begin{equation}p_\eps(x)y'= (\Lambda_0(\eps) +B_1(\eps)x+ \dots +B_k(\eps)x^k)\cdot y, \label{polynomial_normal_form}\end{equation}

\end{theorem}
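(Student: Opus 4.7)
The plan is to assemble the theorem from pieces already in place. The realization over each DS domain is provided by Theorem~\ref{thm:sectoral}. The Compatibility Condition~\ref{Condition_comp} will allow these sectoral realizations to be glued consistently on overlaps via Theorem~\ref{fund_thm}, and the continuity analysis of Section~\ref{sec: auto_intersec} together with Hartogs' theorem will extend the resulting family across the discriminantal locus.

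For the first realization \eqref{extra-pole}, I first fix a matrix $M_\infty$ close to the identity satisfying, relative to $M(0)$, the genericity constraints of Theorem~\ref{thm:sectoral} at $\eps=0$. The Compatibility Condition furnishes cocycle-trivializing matrices $\Gamma_\eps(s)$, and I define $M_\infty(\eps)=\Gamma_\eps(s) M_\infty \Gamma_\eps(s)^{-1}$ on each DS domain; these agree on overlaps by \eqref{def:Gamma}. Theorem~\ref{thm:sectoral} then produces, on each $S_s\cap\D_\rho$, a unique normalized rational family of the form \eqref{extra-pole}. On an intersection $S_s\cap S_{s'}$, both normalized families carry the same formal data, the same conjugate monodromy representations (by \eqref{cond_compatibility} combined with the choice of $M_\infty(\eps)$), and the same normalization of Lemma~\ref{normalization_indecomposable}; Theorem~\ref{fund_thm} then forces them to be equal. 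They therefore glue to one analytic family over $\Sigma_0\cap\D_\rho$.

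To extend across $\{\Delta=0\}$, I use Section~\ref{sec: auto_intersec}. Near a regular point of $\{\Delta=0\}$, the continuous boundary behavior of the $C_{j,\eps,S_s}^U,C_{j,\eps,S_s}^L$, together with the normalization $\widetilde C_{j,\sigma(j),\eps,S_s}^G=\mathrm{Id}$ in picture~B, yields a continuous family of flat connections up to the divisor; flatness plus elliptic regularity, exactly as in Proposition~\ref{prop:continuity}, upgrade this to a holomorphic extension. The self-intersecting \'etale case is handled by the last clause of the Compatibility Condition. The family thus extends holomorphically across the regular part of $\{\Delta=0\}$, and Hartogs' theorem fills in the remaining codimension-two locus, giving the desired family on $\D_\rho$.

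For the second realization \eqref{polynomial_normal_form} in the irreducible case, I replay the argument \emph{without} introducing the auxiliary Fuchsian singularity at $x=R$. At $\eps=0$, Bolibruch's Theorem~\ref{thm:Bolibruch} places the bundle on a holomorphically trivial bundle over $\CP^1$, with only the irregular singularity at the origin and, when $\Lambda_k(0)\neq 0$, a Fuchsian singularity at infinity with residue $-\Lambda_k(0)$. For small $\eps$, the analogous construction glues the picture~B bundle on $\D_R$ directly to the trivial bundle on $\{|x|>R/2\}$ via a single transition carrying the monodromy $M(\eps)$. Irreducibility is open in $\eps$, and triviality of a bundle on $\CP^1$ is likewise open, so the resulting global bundle remains trivial on a smaller polydisk. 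In a global trivialization, the connection has poles only at the zeroes of $p_\eps$, each of order one, with prescribed formal normal form \eqref{normal_form_eps} at the coalesced irregular singularity and residue $-\Lambda_k(\eps)$ at infinity; this forces the matrix form \eqref{polynomial_normal_form} with analytic coefficients $B_j(\eps)$. Compatibility, boundary continuity, and Hartogs extension then assemble these sectoral polynomial families into a global one exactly as in the first part.

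The main obstacle is controlling the holomorphic type of the underlying $\CP^1$-bundle as $\eps$ varies, since a jump in bundle type along a bifurcation locus would spoil the global trivialization. In the first realization this is prevented by the insertion of the extra Fuchsian singularity with generic $M_\infty$, combined with the Schlesinger-type adjustment in the proof of Theorem~\ref{thm:eps_0}, which forces indecomposability and hence triviality; in the second, it is prevented by openness of the irreducibility hypothesis at $\eps=0$. Once triviality is secured, the remaining work is bookkeeping: checking that the normalizations of Lemma~\ref{normalization_indecomposable} propagate across the gluings dictated by Compatibility, which is the uniqueness content of Theorem~\ref{fund_thm}.
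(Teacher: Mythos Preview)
Your argument is correct and follows the paper's own route: realize on each DS domain via Theorem~\ref{thm:sectoral}, glue over $\Sigma_0$ using the Compatibility Condition and Theorem~\ref{fund_thm}, extend across the regular part of $\{\Delta=0\}$ by the continuity analysis of Section~\ref{sec: auto_intersec}, and then invoke Hartogs on the remaining codimension-two set; the irreducible case is handled the same way with Theorem~\ref{thm:Bolibruch} replacing the two-extra-pole normal form.

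One small imprecision worth correcting: the matrices $M_\infty(\eps)=\Gamma_\eps(s)M_\infty\Gamma_\eps(s)^{-1}$ do \emph{not} literally agree on $S_s\cap S_{s'}$, since that would force $G_\eps(s',s)=\Gamma_\eps(s')^{-1}\Gamma_\eps(s)$ to commute with the generic matrix $M_\infty$, which it has no reason to do. What actually agrees on overlaps is the \emph{conjugated} full monodromy representation $\widetilde{\mathcal N}_{\eps,S_s}=\Gamma_\eps(s)\,\mathcal N_{\eps,S_s}\,\Gamma_\eps(s)^{-1}$, as in Lemma~\ref{monodromy_equal} and the Proposition preceding Theorem~\ref{fund_thm}; this is precisely the input that Theorem~\ref{fund_thm} consumes, so your invocation of that theorem is valid and the slip is harmless.
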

\begin{proof} Let us call $\Sigma_0'\subset \D_\rho$ the locus of in $\eps$-space of the polynomials with either distinct zeroes or one double zero and the rest distinct; it is the union of $\Sigma_0$ (distinct zeroes; defined in \eqref{Sigma_0}) and  the generic points of $\Delta=0$ in parameter space (one double zero); its complement has codimension two. We have built an open covering $\{V_\alpha\}$ of $\Sigma_0'$, and on each open set $V_\alpha$, a realization by a linear system $\mathcal{E}_\alpha$ depending analytically on $\eps\in V_\alpha$. The Compatibility Condition~\ref{Condition_comp} ensures that for  $\eps\in V_\alpha\cap V_\beta$, the systems $\mathcal{E}_\alpha$ and $\mathcal{E}_\beta$ are analytically equivalent, and, once normalised, are the same. 
Hartogs' theorem allows to extend the family of rational  linear differential systems \eqref{eq_deployee} to a uniform family over $\D_\rho$; one uses implicitly the fact that a small deformation of a trivial bundle is trivial, and our normalisations  Lemma~\ref{normalization_indecomposable} for uniqueness. 

The same type of arguments can be used verbatim in the particular case of an irreducible system at $\eps=0$ and \eqref{polynomial_normal_form}, since everything relies on a uniquely normalized realization at $\eps=0$.  Here we use Theorem~\ref{thm:Bolibruch} instead of our normal form.    \end{proof} 

\subsection{The moduli space}

In the literature the  \lq\lq moduli space\rq\rq\ is a universal space, typically finite dimensional, with all families of a given type of object given by mapping into that space, and often (for a fine moduli space) obtaining the family by pulling back.

One could, of course describe the families directly, and in some sense that is what we are doing here. Our families of deformations will be, in effect, given by compatible families of Stokes matrices, and so by maps into the matrix groups. In that sense, we speak of the moduli space, even though what we are describing, is, a priori, an infinite dimensional family of maps $C_{1,\eps, S_s}^L(\eps),\dots,  C_{k,\eps, S_s}^{U}(\eps)$ and  $C_{j,\sigma(j),\eps,S_s}^G(\eps)$.
These maps come with equivalences  arising from actions of the group $\mathcal{D}_n$ of invertible diagonal $n\times n$ matrices, corresponding to varying the trivializations compatible with the flags on each sector, for  each DS domain. Once one has normalized,   this action reduces to one of the form \begin{equation}\begin{cases}
C_{j,\eps, S_s}^{L} &\mapsto\qquad K_s(\eps) {C}_{j,\eps, S_s}^{L}K_s(\eps)^{-1},\\
C_{j,\eps, S_s}^{U} &\mapsto \qquad K_s(\eps) {C}_{j,\eps, S_s}^{U}K_s(\eps)^{-1},\\
C_{j,\sigma(j),\eps,S_s}^G &\mapsto\qquad C_{j,\sigma(j),\eps,S_s}^G,\end{cases}\label{act2}
\end{equation}
for $K_s(\eps)$ a diagonal invertible matrix. 
We introduce the following equivalence relation on the collections of Stokes matrices:

\begin{definition}\label{def:Stokes_equiv} Two collections of (normalized)   Stokes matrices $\{C_{j,\eps, S_s}^{L,U}, C_{j,\sigma(j),\eps,S_s}^G\}_{\eps\in S_s}$ and $ \{\widehat{C}_{j,\eps, S_s}^{L,U},\widehat C_{j,\sigma(j),\eps,S_s}^G\}_{\eps\in S_s}$ on  a give DS domain  $S_s$  are \emph{equivalent} if there exists a family of invertible diagonal matrices $K_s(\eps)$, depending analytically on $\eps\in S_s$ with continuous invertible limit at $\eps=0$ and at generic points of $\{\Delta=0\}$ such that the action \eqref{act2} on the first collection of Stokes matrices gives the second. 
We denote the equivalence class by $\left[\left\{C_{1,\eps, S_s}^L,\dots,  C_{k,\eps, S_s}^{U}, \right\}_{\eps\in S_s}\right]$. \end{definition}

\begin{theorem}\label{thm:moduli_space} The moduli space under analytic equivalence for germs of generic unfoldings of nonresonant linear differential systems with an irregular singularity of finite nonzero Poincar\'e rank at the origin and diagonal matrix $\Lambda_0(0)$ with distinct eigenvalues satisfying \eqref{order_eigenvalues},  is given by the set of tuples
$$\left(k, \Lambda_0(\eps), \dots, \Lambda_k(\eps), \left[\left\{C_{1,\eps, S_s}^L,\dots,  C_{k,\eps, S_s}^{U}\right\}_{\eps\in S_s}\right]_{s=1}^{C_k}\right),$$
where
\begin{itemize}
\item $k\geq1$ is an integer;
\item $\Lambda_0(\eps), \dots, \Lambda_k(\eps)$ are formal invariants given by germs of analytic diagonal matrices; 
\item for each DS domain $S_s$, $\left[\left\{C_{1,\eps, S_s}^L,\dots,  C_{k,\eps, S_s}^{U}\right\}_{\eps\in S_s}\right]$ are collections of equivalence classes of germs of invertible normalized (Stokes) upper (resp. lower) triangular matrices $C_{j,\eps, S_s}^U$, (resp. $C_{j,\eps, S_s}^L$), $j=1, \dots, k$, depending analytically on $\eps\in S_s$ with continuous limit at $\eps=0$ independent of $\eps$, continuous limit  at generic points of $\{\Delta=0\}$, and satisfying the Compatibility Condition~\ref{Condition_comp}.\end{itemize}
\end{theorem}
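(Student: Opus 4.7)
The plan is to establish Theorem \ref{thm:moduli_space} as the combination of a classification statement and a realization statement, with the Compatibility Condition playing the role of the necessary and sufficient algebraic constraint cutting out admissible tuples.

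First, I would recall from \cite{HLR} that, for a germ of a generic unfolding of a nonresonant irregular singularity of Poincar\'e rank $k$, the tuple consisting of the integer $k$, the formal invariants $\Lambda_0(\eps),\dots,\Lambda_k(\eps)$, and the (normalized) generalized Stokes matrices $\{C_{j,\eps,S_s}^L, C_{j,\eps,S_s}^U, C_{j,\sigma(j),\eps,S_s}^G\}$ over each DS domain, taken modulo the diagonal gauge action \eqref{act2}, is a complete analytic invariant: two germs with identical such tuples are analytically equivalent. This gives the injectivity of the modulus map from equivalence classes of germs to tuples, so the only thing to check on that side is that our normalization here matches the one of \cite{HLR} (which it does, since both are specified by the convention that the diagonal parts of the normalized generalized Stokes matrices encode the formal monodromy).

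Next, I would verify that every tuple obtained from a genuine germ does satisfy the listed properties: the analytic dependence on $\eps\in S_s$, the continuous independent-of-$s$ limit at $\eps=0$, the continuous limit at generic points of $\{\Delta=0\}$, and the Compatibility Condition~\ref{Condition_comp}. The first three are direct from the construction in \cite{HLR}; the Compatibility Condition is exactly Remark~\ref{remark:necessity}, where it is shown that the conjugacies $G_\eps(s,s')$ between the monodromy representations arising from different DS presentations are not only cocyclic but admit a trivialization $\Gamma_\eps(s)$ with continuous limits on the boundary of each DS domain. Thus the modulus map lands in the set described in the theorem.

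Finally, for surjectivity, I would invoke Theorem \ref{thm: real1}: given any tuple $(k,\Lambda_0,\dots,\Lambda_k,[\{C_{j,\eps,S_s}^{L,U}\}])$ satisfying the stated hypotheses and the Compatibility Condition, that theorem produces an analytic family of rational linear systems over the polydisk $\D_\rho$ realizing precisely these data. Combined with the classification direction above, this identifies the moduli space set-theoretically with the set of tuples described. The main conceptual content has already been handled upstream: the hard work is concentrated in Theorem \ref{fund_thm} (which converts the Compatibility Condition into equality of the normalized realizations on DS overlaps), in the continuity analysis of Section~\ref{sec: auto_intersec} at generic points of $\{\Delta=0\}$, and in the Hartogs extension across the codimension-two remainder; the present statement is essentially the bookkeeping assertion that these pieces fit together into a bijection between germs modulo analytic equivalence and admissible tuples. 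The only subtle step to write carefully is verifying that the equivalence relation \eqref{act2} on tuples exactly matches the ambiguity in the modulus coming from the choice of flag-adapted bases on each sector; this follows from the fact that, after our normalization, the residual gauge freedom on each DS domain reduces to a single diagonal matrix $K_s(\eps)$ acting by simultaneous conjugation on all Stokes and gate matrices attached to $S_s$.
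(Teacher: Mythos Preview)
Your proposal is correct and matches the paper's approach: the paper in fact gives no explicit proof of this theorem, treating it as the summary statement that follows once the classification result of \cite{HLR} (injectivity of the modulus map), Remark~\ref{remark:necessity} (necessity of the Compatibility Condition), and Theorem~\ref{thm: real1} (realization, hence surjectivity) are in hand. Your write-up correctly identifies and assembles precisely these three ingredients.
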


\section{Ramifications} 

\begin{theorem} We consider a germ of family \eqref{eq_deployee}. If  there exists a permutation matrix $P$ such that the permuted Stokes matrices $PC_{j,\eps,S_s}^{\dag}P^{-1} $ have a common block diagonal structure with blocks of size $n_1, \dots, n_m$, $n_1+\dots + n_m=n$ for all $j=1, \dots, k$, for all $S_s$ and for all $\dag\in \{L,U\}$, then  the germ of family is analytically equivalent to a direct product of germs of $m$ families of linear differential equations on $\C^{n_i}$ for each $i$. \end{theorem}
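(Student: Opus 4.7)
The strategy is to construct a direct-product realization block-by-block using Theorem~\ref{thm: real1} and then to identify it with the given family using the completeness of the modulus established in \cite{HLR}. After conjugating the given family by $P$ (an analytic equivalence), we may assume that all Stokes matrices $C_{j,\eps,S_s}^{L}$ and $C_{j,\eps,S_s}^{U}$ are themselves block-diagonal with blocks of sizes $n_1,\dots,n_m$. Since the formal normal form is diagonal, it automatically respects this block decomposition: we write $\Lambda_j(\eps) = \bigoplus_{i=1}^m \Lambda_j^{(i)}(\eps)$ where each block $\Lambda_j^{(i)}$ is diagonal of size $n_i$. The ordering hypothesis \eqref{order_eigenvalues} restricts to an analogous ordering within each block (since $P$ preserves the order of indices within each block), and the blocks $C^{(i),L}_{j,\eps,S_s}$, $C^{(i),U}_{j,\eps,S_s}$ are themselves lower, respectively upper, triangular, as restrictions of triangular matrices to a contiguous subset of indices.

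The crucial step is to verify that, for each $i \in \{1,\dots,m\}$, the extracted sub-modulus $(\Lambda_0^{(i)},\dots,\Lambda_k^{(i)},\{[C^{(i),L,U}_{j,\eps,S_s}]\})$ satisfies the Compatibility Condition~\ref{Condition_comp}. Since the given family is itself a realization of the full modulus, the full Compatibility Condition holds (Remark~\ref{remark:necessity}) with conjugators $G_\eps(s,s') = \Gamma_\eps(s)^{-1}\Gamma_\eps(s')$. The key observation is that these conjugators can be chosen block-diagonal. Indeed, by Remark~\ref{remark:necessity}, $\Gamma_\eps(s)$ is determined by the sectorial normalizing change $H_{k,\eps,s}^-(x_b)$ from the formal to the analytic presentation; the block-diagonal structure of the Stokes cocycle partitions the sectorial solution space into direct summands on each $\Omega^\pm_{j,\eps,s}$ that are permuted among themselves (i.e., invariantly paired) by the Stokes and gate transitions, so that $H_{k,\eps,s}^-$ may be chosen to respect this decomposition. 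Extracting diagonal blocks then yields invertible conjugators $G_\eps^{(i)}(s,s') = \Gamma_\eps^{(i)}(s)^{-1}\Gamma_\eps^{(i)}(s')$ with $\Gamma_0^{(i)}(s) = \mathrm{id}_{n_i}$, continuous up to the boundary of $S_s$, which conjugate the block monodromy representations $\mathcal{M}^{(i)}_{\eps,S_s}$.

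With each sub-modulus satisfying the hypotheses of Theorem~\ref{thm: real1}, that realization theorem produces, for each $i$, a rational family
\begin{equation*}
y^{(i)'} = \frac{A^{(i)}(\eps,x)}{p_\eps(x)} \cdot y^{(i)}
\end{equation*}
of rank $n_i$ realizing the sub-modulus. The direct sum
\begin{equation*}
y' = \frac{A^{(1)}(\eps,x) \oplus \cdots \oplus A^{(m)}(\eps,x)}{p_\eps(x)} \cdot y
\end{equation*}
is a rank-$n$ family whose formal invariants are $\Lambda_0(\eps),\dots,\Lambda_k(\eps)$ and whose Stokes matrices are exactly $C^{L,U}_{j,\eps,S_s}$, so that its modulus coincides with that of the given family. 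By the completeness of the modulus established in \cite{HLR}, this direct-sum family is analytically equivalent to the original one (and conjugating back by $P^{-1}$ gives the final equivalence).

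The main obstacle is the verification that $\Gamma_\eps(s)$ may be chosen block-diagonal: a priori the full Compatibility Condition only yields arbitrary invertible conjugators, and without block-diagonality, extraction of the blocks need not respect the cocycle relation. The argument rests on the compatibility of the sectorial normalizations $H_{j,\eps,s}^\pm$ with the block structure of the Stokes cocycle and on the fact that the diagonal formal normal form does not mix blocks; this compatibility is precisely the geometric content of the hypothesis that \emph{all} the Stokes matrices share a common block-diagonalization.
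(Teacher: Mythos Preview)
Your proposal is correct and follows the same route as the paper: extract the block sub-moduli, realize each via Theorem~\ref{thm: real1}, form the direct sum, and invoke the completeness result of \cite{HLR}. The paper's proof is terser and simply asserts that each block can be realized; you supply the step it omits, namely that each block sub-modulus inherits the Compatibility Condition~\ref{Condition_comp}. Your justification --- that the common block structure of all Stokes and (diagonal) gate transitions makes the span of each block of the flag-adapted basis a covariant-constant summand on every sector, so the sectorial normalizations $H^\pm_{k,\eps,s}$ and hence the $\Gamma_\eps(s)$ may be taken block-diagonal --- is sound; equivalently, those summands are connection-invariant sub-bundles of the \emph{given} family, and restricting to them already realizes each block sub-modulus, so the block Compatibility Condition holds by necessity (Remark~\ref{remark:necessity}).

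One small point: the claim that ``$P$ preserves the order of indices within each block'' is not automatic for an arbitrary $P$ witnessing the block structure, but it can always be arranged by composing $P$ with a further permutation internal to each block (which does not disturb block-diagonality); with that adjustment the block Stokes matrices remain triangular and the ordering~\eqref{order_eigenvalues} holds blockwise.
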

\begin{proof} The modulus can be decomposed as a direct product of $m$ moduli. We realize $m$ families on $\C^{n_1}, \dots \C^{n_m}$, having as Stokes matrices the corresponding blocks of size $n_i$ and corresponding formal invariants. The direct product of these families has the same modulus as the original family. Hence, it is analytically equivalent to it. \end{proof}

An immediate consequence of the realization theorem is the following normal form theorem. 

\begin{theorem}\label{normal_form_rational} A germ of family of linear differential systems unfolding an  irregular non resonant singular point of Poincar\'e rank $k$  is analytically equivalent for an arbitrary choice of a non-zero  $R$ (independent of $\eps$) to a rational form 
\begin{equation}p_\eps(x)y'= \frac{\Lambda_0(\eps) +B_1(\eps)x+ \dots +B_k(\eps)x^k+ B_{k+1}(\eps)x^{k+1}}{1-x/R}\cdot y, \label{rational_normal_form}\end{equation}
where $\Lambda_0(\eps)$ is diagonal with distinct eigenvalues. A further normalization of $n-1$ non diagonal monomials in the numerator as in Lemma~\ref{normalization_indecomposable} can bring the system to a unique form. \end{theorem}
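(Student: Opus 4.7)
The strategy is to take the global Fuchsian realization provided by Theorem~\ref{thm: real1} and perform a purely algebraic rewriting that collapses the two-pole structure into the single factor $1-x/R$ demanded by \eqref{rational_normal_form}. No new existence or compactness argument is required beyond what the realization theorem already supplies; the remaining work is a clean-up of the rational form together with a mild diagonalization of the leading coefficient.

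First I would apply Theorem~\ref{thm: real1} with the prescribed (nonzero, $\eps$-independent) value of $R$, obtaining an analytically equivalent family of the form \eqref{extra-pole}:
$$y' = \left(\frac{A(\eps,x)}{p_\eps(x)} + \frac{\widehat A(\eps)}{x-R}\right) y, \qquad A(\eps,x) = A_0(\eps) + A_1(\eps) x + \cdots + A_k(\eps) x^k.$$
Multiplying through by $p_\eps(x)$ and using the identity $\frac{1}{x-R} = \frac{-1/R}{1-x/R}$, the two terms combine into a single fraction:
$$p_\eps(x) y' = \frac{(1-x/R)\,A(\eps,x) - p_\eps(x)\,\widehat A(\eps)/R}{1-x/R}\, y.$$
Since $A(\eps,x)$ has degree at most $k$ and $p_\eps(x)$ has degree $k+1$, the numerator $N(x,\eps) := (1-x/R)\,A(\eps,x) - \frac{1}{R}\, p_\eps(x)\,\widehat A(\eps)$ is a polynomial in $x$ of degree at most $k+1$ with coefficients depending analytically on $\eps \in \D_\rho$.

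Next I would arrange that the constant term of $N$ is diagonal. By construction, $N(0,\eps) = A_0(\eps) - \eps_0\,\widehat A(\eps)/R$, and at $\eps=0$ this equals $A_0(0) = \Lambda_0(0)$, which is diagonal with distinct eigenvalues by hypothesis. By continuity $N(0,\eps)$ retains distinct eigenvalues on a (possibly smaller) polydisk, so there is a constant-in-$x$ diagonalizing gauge $D(\eps)$, analytic in $\eps$ with $D(0)=I$. Since $D(\eps)$ is scalar with respect to $x$, conjugation by $D(\eps)$ commutes with the scalar factors $p_\eps(x)$ and $1-x/R$ and therefore preserves the shape of the equation. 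Writing the conjugated constant term as $\Lambda_0(\eps)$ and expanding $D(\eps)^{-1} N(x,\eps) D(\eps) = \Lambda_0(\eps) + B_1(\eps) x + \cdots + B_{k+1}(\eps) x^{k+1}$ produces exactly the form \eqref{rational_normal_form}.

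For the final uniqueness assertion, I would invoke Lemma~\ref{normalization_indecomposable}: once the leading coefficient is diagonal with distinct eigenvalues, the residual gauge freedom is the group of constant invertible diagonal matrices, and exactly $n-1$ independent off-diagonal monomials of the numerator can be scaled to $1$, producing a unique representative (up to overall scalar). The only genuine subtlety, which I would verify explicitly, is that the composite of the diagonalizing gauge $D(\eps)$ with the normalization diagonal gauge remains analytic in $\eps$ with invertible limit at $\eps=0$ and at the boundary of the DS domains; this follows directly from the analyticity and continuous-limit statements already built into Theorem~\ref{thm:sectoral} and Compatibility Condition~\ref{Condition_comp}.
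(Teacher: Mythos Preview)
Your proof is correct and follows essentially the same approach as the paper: realize the modulus of the given family via Theorem~\ref{thm: real1} in the form \eqref{extra-pole}, then invoke the classification result that identical modulus implies analytic equivalence. The paper's own proof is the two-line version of this; you have additionally spelled out the elementary algebraic rewriting from \eqref{extra-pole} to \eqref{rational_normal_form} and the constant-in-$x$ diagonalization of the leading coefficient, both of which the paper leaves implicit.
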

\begin{proof}  We realize the modulus of such a family in the form \eqref{rational_normal_form} by Theorem~\ref{thm: real1}. Then the initial family and \eqref{rational_normal_form} are analytically equivalent since they have the same modulus.\end{proof}

In the special case when the connection at $\eps=0$ is irreducible, as noted above,  we have given a new proof of the following theorem by Kostov:

\begin{theorem}\label{normal_form_polynomial} (\cite{K}) A germ of family of linear differential systems unfolding an irreducible irregular non resonant singular point of Poincar\'e rank $k$  is analytically equivalent to a polynomial form  as in \eqref{polynomial_normal_form}
\begin{equation}p_\eps(x)y'= (\Lambda_0(\eps) +B_1(\eps)x+ \dots +B_k(\eps)x^k)\cdot y, \label{polynomial_normal_form-2}\end{equation}
with $\Lambda_0(\eps)$ diagonal with distinct eigenvalues. A further normalization of $n-1$ non diagonal monomials as in Lemma~\ref{normalization_indecomposable} can bring the system to a unique polynomial form. \end{theorem}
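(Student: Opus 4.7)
The plan is to follow the same pattern as the proof of Theorem~\ref{normal_form_rational}, replacing the rational realization by the irreducible-case realization supplied at the end of Theorem~\ref{thm: real1}. First I would extract from the given germ of family its complete modulus: the formal invariants $\Lambda_0(\eps), \dots, \Lambda_k(\eps)$ together with, for each DS domain $S_s$, the equivalence class of its normalized Stokes matrices $C_{j,\eps,S_s}^L, C_{j,\eps,S_s}^U$. By Remark~\ref{remark:necessity} these data automatically satisfy the Compatibility Condition~\ref{Condition_comp}, and by \cite{HLR} they constitute a complete analytic invariant of the family.

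Second, I would invoke the final clause of Theorem~\ref{thm: real1}. Since the singularity at $\eps=0$ is irreducible, that clause produces an analytic family of the polynomial form \eqref{polynomial_normal_form} on a polydisk around the origin whose modulus is precisely the one just extracted. Because the original family and this constructed polynomial family share the same modulus, the classification result of \cite{HLR} forces them to be analytically equivalent as germs of families, which establishes the existence part of the statement.

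For the uniqueness clause, I would observe that an irreducible bundle with connection is in particular indecomposable, so Lemma~\ref{normalization_indecomposable} applies to the coefficient matrix $\Lambda_0(\eps)+B_1(\eps)x+\dots+B_k(\eps)x^k$ and lets us normalize the coefficients of $n-1$ suitably chosen non-diagonal monomials to $1$ by a constant (in $x$) diagonal gauge transformation, producing a unique polynomial normal form. No serious obstacle is expected here: the heavy lifting is already contained in Theorem~\ref{thm: real1} and Lemma~\ref{normalization_indecomposable}, so this theorem is essentially a corollary. The only subtle point worth checking is that irreducibility is assumed only at $\eps=0$, whereas the polynomial realization is needed over a full polydisk in parameter space; the persistence of that realization under small deformations is already built into the proof of Theorem~\ref{thm: real1} through the fact that a small deformation of a trivial bundle on $\CP^1$ is trivial.
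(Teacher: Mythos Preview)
Your proposal is correct and follows essentially the same approach as the paper's proof: realize the modulus of the given family in the polynomial form \eqref{polynomial_normal_form-2} via the irreducible clause of Theorem~\ref{thm: real1}, and conclude analytic equivalence because the two families share the same modulus. The paper's proof is just a terser version of what you wrote, omitting the explicit mention of Remark~\ref{remark:necessity}, \cite{HLR}, and the indecomposability step behind Lemma~\ref{normalization_indecomposable}.
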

\begin{proof}  We realize the modulus of such a family in the form \eqref{polynomial_normal_form-2} by Theorem~\ref{thm: real1}. Then the initial family and \eqref{polynomial_normal_form-2} are analytically equivalent since they have the same modulus.\end{proof}

\begin{remark} The number of parameters in Theorem~\ref{normal_form_polynomial} is optimal as remarked by Kostov in \cite{K2}. 
Indeed, for each $\eps$,  the modulus is described by 
\begin{itemize} \item 
$(k+1)n$ formal invariants (eigenvalues at each singular point) for each $\eps$;
\item $2k$ normalized upper or lower  triangular Stokes matrices each with $\frac{n(n-1)}2$ nontrivial upper or lower triangular entries. The equivalence defined above subtracts   $ n-1$ parameters (the conjugation action of the scalar matrices being trivial)  for a total of $(n-1)(kn-1)$ parameters.
\end{itemize} All together, this yields $kn^2+1$ parameters. 
Now, each system \eqref{polynomial_normal_form} is described for each $\eps$ by $kn^2+n$ coefficients. This form is unique up to the action of a diagonal matrix, which allows further scaling of  $n-1$ coefficients.

 The number of parameters in Theorem~\ref{thm: real1} can be explained in a similar way. The realized system $p_\eps(x)(x-R)= A_\eps(x)$ depends on $(k+1)n^2 + n$ parameters, which are the coefficients of $A_\eps(x)$ (remember that $A_\eps(0)$ is diagonal). A diagonal normalization reduces this number to $(k+1)n^2+1$. On the other hand,
 \begin{itemize} \item 
The  Stokes matrices  and the formal monodromy at the zeroes of $p_\eps$ together  give $kn^2+1$ coefficients, quotienting by our equivalences, as above.   
\item Moreover, the $n^2$ coefficients of the monodromy matrix around $\infty$ were chosen, basically arbitrarily in a small set. \end{itemize}This leads to the same total of $(k+1)n^2+1$. Hence, the number of parameters  is   explained by the full generality we introduced in the monodromy at infinity.
 \end{remark}

\end{document}